\numberwithin{equation}{section}
\newcommand{\R}{\mathbb{R}}
\newtheorem{Thm}{Theorem}[section]
\newtheorem{Lem}[Thm]{Lemma}
\newtheorem{Cor}[Thm]{Corollary}
\newtheorem{Prop}[Thm]{Proposition}
\theoremstyle{definition}
\newtheorem{Def}[Thm]{Definition}
\theoremstyle{remark}
\begin{document}

\title
[Energy critical half-wave]
{Multiplicity and asymptotics of standing waves for the energy critical half-wave}
\footnotetext{*This work was supported by National Natural Science Foundation of China (Grant Nos. 11901147, 12071169), the Fundamental Research Funds for the Central Universities of China (Grant No. JZ2020HGTB0030) and the postgraduate education innovation funding project from Central China Normal University (Grant No. 2020CXZZ070).}


\maketitle
\begin{center}
\author{Xiao Luo}
\footnote{Email addresses: luoxiao@hfut.edu.cn (X. Luo).}
\author{Tao Yang}
\footnote{Email addresses: yangtao\_pde@163.com (T. Yang).}

\author{Xiaolong Yang}
\footnote{Corresponding Author: yangxiaolong@mails.ccnu.edu.cn (X. L. Yang).}

\end{center}

\begin{center}
\address {1 School of Mathematics, Hefei University of Technology, Hefei, 230009, P. R. China}

\address {2,3 School of Mathematics and Statistics, Central China Normal University, Wuhan, 430079, P. R. China}
\end{center}




\begin{abstract}
In this paper, we consider the multiplicity and asymptotics of standing waves with prescribed mass $\int_{{\mathbb{R}^N}} {{u}^2}=a^2$ to the energy critical half-wave
\begin{equation}\label{eqA0.1}
\sqrt{-\Delta}u=\lambda u+\mu|u|^{q-2} u+|u|^{2^*-2}u,\ \ u\in H^{1/2}(\R^N),
\end{equation}
where $N\!\geq\! 2$, $a\!>\!0$, $q \!\in\!\big(2,2+\frac{2}{N}\big)$, $2^*\!=\!\frac{2N}{N-1}$ and $\lambda\!\in\!\R$ appears as a Lagrange multiplier. 
We show that \eqref{eqA0.1} admits a ground state $u_a$ and an excited state $v_a$, which are characterised by a local minimizer and a mountain-pass critical point of the corresponding energy functional.
Several asymptotic properties of $\{u_a\}$, $\{v_a\}$ are obtained and it is worth pointing out that we get a precise description of $\{u_a\}$ as $a\!\to\! 0^+$ without needing any uniqueness condition on the related limit problem. The main contribution of this paper is to extend the main results in J. Bellazzini et al. [Math. Ann. 371 (2018), 707-740] from energy subcritical to energy critical case. Furthermore, these results can be extended to the general fractional nonlinear Schr\"{o}dinger equation with Sobolev critical exponent, which generalize the work of H. J. Luo-Z. T. Zhang [Calc. Var. Partial Differ. Equ. 59 (2020)] from energy subcritical to energy critical case.
\end{abstract}

\maketitle

{\small\small
\keywords {\noindent {\bf Keywords:}
 {Energy critical; Half-Wave; Standing waves; Prescribed mass; Variational methods.}
\smallskip
\newline
\subjclass{\noindent {\bf 2010 Mathematics Subject Classification:} 35Q55, 35A01, 35R11.
}
}
\section{Introduction and Main Result}

\setcounter{equation}{0}

The nonlinear half-wave equations
\begin{equation}\label{eq1}
i\partial_{t} \psi=\sqrt{-\Delta} \psi-f(|\psi|)\psi  \text { in } \mathbb{R}\times \mathbb{R}^{N}
\end{equation}
are well-known and widely studied by scholars, see \cite{GL,BJ,BGL,HW,KL} and references therein. The operator $\sqrt{-\Delta}$ is defined by
\begin{equation*}
\sqrt{-\Delta}\psi = \mathcal{F}^{-1}(|\xi|\mathcal{F}\psi),\ \ \forall \psi\in H^{1/2}(\R^N),
\end{equation*}
where $\mathcal{F}\psi$ is the Fourier transform of $\psi$, the space $H^{1/2}(\R^N)$ is defined as
$$
H^{1/2}(\R^N):=\big\{u\in L^2(\R^N),\ \int_{\R^N}\int_{\R^N}\frac{|u(x)-u(y)|^2}{|x-y|^{N+1}}<+\infty\big\}.
$$
Physically, the motivation to the study of \eqref{eq1} ranges from turbulence phenomena, wave propagation, continuum limits of long-range lattice system, and models for gravitational collapse in astrophysics (see e. g. \cite{EA,ElMl}).
Equation \eqref{eq1} with power-type nonlinearities has been studied in \cite{BJ}. Concerning the local/global well-posedness of solutions to \eqref{eq1}, please refer to \cite{BGL,HW}. The dynamical properties of blow-up solutions have been investigated in  e.g. \cite{FB,KL}.

Taking $f(t)=\mu t^{q-2}+t^{p-2}$ with $2<q,p\leq\frac{2N }{N-1}$, if $u$ satisfies the following equation
\begin{equation}\label{1.1}
\sqrt{-\Delta}u=\lambda u+\mu|u|^{q-2} u+|u|^{p-2}u, \ \ \ u\in H^{1/2}(\R^N)
\end{equation}
for some $\lambda\in \R$, then
$$
\psi(t,x)=e^{-i\lambda t}u(x)
$$
solves equation \eqref{eq1}. We say that $u\in H^{1/2}(\mathbb{R}^{N})$ is a weak solution to (\ref{1.1}) if
$$\int_{\mathbb{R}^{N}} [\sqrt{\!-\!\Delta} u \varphi\!-\!\lambda u{\varphi}]\!-\!\mu\int_{\mathbb{R}^{N}} \left|u\right|^{q\!-\!2} u {\varphi}\!-\! \int_{{\mathbb{R}^N}}|u|^{p-2}u\varphi\!=0,~~~~\forall \varphi \in H^{1/2}(\mathbb{R}^{N})$$
and $u\in H^{1/2}({\mathbb{R}^N})$ is a \textbf{normalized solution} to (\ref{1.1}) if there exist a real number $\lambda \in {\mathbb{R}}$ and a function $u\in  H^{1/2}({\mathbb{R}^N})$ weakly solving (\ref{1.1}) together with the normalization condition $||u||_2^2=a^2$.

Recently, normalized solutions to elliptic PDEs and systems attract much attention of researchers e.g. \cite{JJ,JL,LY,NsAe,NSaE,LZ}. In \cite{NSaE}, N. Soave considered the existence of normalized ground states to the following energy (Sobolev) critical Schr\"{o}dinger equation
\begin{equation}\label{eq2}
-\Delta u=\lambda u+\mu |u|^{q-2}u+|u|^{\frac{4}{N-2}}u, \ \  \ u\in H^{1}(\R^N),
\end{equation}
where $\mu\in\R$ and $2<q<\frac{2N}{N-2}$. The whole study of \cite{NSaE} can be considered as a counterpart of the Br\'{e}zis-Nirenberg problem in the context of normalized solutions. If $\mu>0$ and $2<q<2+\frac{4}{N}$, it was proved in \cite{NSaE} and \cite{JJ} that for small mass, there is a ground state to \eqref{eq2}, which corresponds to the local minimizer of the associated energy functional constrained on the $L^2$-sphere via different methods, respectively. In this case, N. Soave \cite{NSaE} raised an open question on how to obtain the second solution of \eqref{eq2} since the associated energy functional constrained on the $L^2$-sphere admits a convex-concave geometry. It is worth pointing out that, L. Jeanjean and T. T. Le \cite{JL} solved this open question if $N \ge 4$. Also for small mass, Jeanjean et al. in \cite{JL} proved the existence of a second solution to \eqref{eq2}, which is an excited state corresponding to the mountain-pass critical point of the associated energy functional.

H. J. Luo and Z. T. Zhang \cite{ZL} extended the existence and nonexistence results of N. Soave \cite{NsAe} to fractional Schr\"{o}dinger equation
\begin{equation}\label{ab1}
(-\Delta)^\sigma u=\lambda u+\mu |u|^{q-2}u+|u|^{p-2}u, \ \ u\in H^{\sigma}(\R^N),
\end{equation}
where $0<\sigma<1$, $N\ge2$, $\mu\in\R$ and $2<q<p<\frac{2N}{N-2\sigma}$.
In addition, they proved a uniqueness result for the case $\mu=0$ and
$2<p<2+\frac{4\sigma}{N}$.
If $\sigma=\frac{1}{2}$, $N\ge 2$, $\mu=0$ and $2+\frac{2}{N}<p<\frac{2N}{N-1}$, Bellazzini et al. in \cite{BJ} considered a minimization problem for \eqref{ab1} under the normalization constraint on the Pohozaev manifold. Moreover, they presented a different proof of the existence of ground states and characterized the dynamics near the ground states. In preparing this paper, we notice that in the very recent work \cite{ZZ}, the authors extended the results of \cite{NSaE} to the fractional Laplacian operator and studied \eqref{ab1} with $p=\frac{2N}{N-2\sigma}$, they also obtained a ground state to \eqref{ab1} for small mass.

Motivated by the above works, in this paper, we consider the existence and asymptotic properties of solutions to the energy critical half-wave
\begin{equation}\label{eq1.1}
\sqrt{-\Delta} u=\lambda u+\mu|u|^{q-2} u+ |u|^{ 2^*-2 }u, \ \ u\in H^{1/2}(\mathbb{R}^{N})
\end{equation}
under the constraint
\begin{equation}\label{ad1}
 \int_{\R^N}u^2=a^2,
\end{equation}
where
$\mu>0,~~q \!\in\!\big(2,2+\frac{2}{N}\big),~~2^*:=\frac{2N }{N-1}$.
Notice that $|u|^{2^*-2}u$ is energy critical as $2^*$ is the Sobolev critical exponent. A natural approach to find solutions of \eqref{eq1.1}-\eqref{ad1} is to search critical points of the energy functional
\begin{align*}
F_{\mu}(u)=\frac{1}{2}\int_{\R^N}u\sqrt{-\Delta} u-\frac{\mu}{q}\int_{{\mathbb{R}^N}}|u|^{q}-\frac{1}{2^*}
\int_{{\mathbb{R}^N}}|u|^{2^*}
\end{align*}
on the $L^2$-sphere
\begin{equation*}\label{ab2}
S(a): =\Big\{ u \in H^{1/2}({\mathbb{R}^N}): {||u||}_2^2=a^2  \Big\}.
\end{equation*}
 Then the parameter $\lambda$ appear as Lagrange multipliers.


By the $L^2$-norm preserving dilations $t\star u(x)=t^{\frac{N}{2}}u(tx)$ with $t\!>\!0$, it is easy to know that
$\bar{q}:=2+\frac{2}{N}$ is the $L^2$-critical exponent of \eqref{eq1.1}.
We say that $\tilde{u}$ is a ground state of (\ref{eq1.1}) on $S(a)$ if
$$
d\left.F_{\mu}\right|_{S(a)}(\tilde{u})=0 \quad \text { and } \quad F_{\mu}(\tilde{u})=\inf \big\{F_{\mu}(u): ~~~~d\left.F_{\mu}\right|_{S(a)}(u)=0,\ \text{and}\ u\in S(a)\big\}.
$$
As $2<q<\bar{q}<2^*$, we have $\inf\limits_{u\in S(a)}F_\mu(u)=-\infty$. In spirit of Jeanjean \cite{JJ}, we see that the presence of the mass subcritical term $\mu|u|^{q-2} u$ created a convex-concave geometry of $F_\mu|_{S(a)}$ if $a>0$ is small. In view of this, it is natural to introduce a suitable local minimization problem
\begin{equation}\label{b11}
m(a):=\inf_{u\in V(a)}F_\mu
(u)<0\le\inf_{u\in\partial V(a)}F_\mu(u),
\end{equation}
where
$$
V(a):=\{u\in S(a): \|u\|_{\dot{H}^{1/2}}<\rho_0 \},~~\partial V(a):=\{u\in S(a): \|u\|_{\dot{H}^{1/2}}=\rho_0 \},
$$
see Section 3 for details. Up to a normalizing constant, we write $\|u\|^2_{\dot{H}^{1/2}}$ as
\begin{equation*}
\|u\|^2_{\dot{H}^{1/2}}=\int_{\R^N}u\sqrt{-\Delta} u=\int_{\R^N}\int_{\R^N}\frac{|u(x)-u(y)|^2}{|x-y|^{N+1}}.
\end{equation*}

Our main results are as follows. We first consider the existence and asymptotic behaviour of ground states to \eqref{eq1.1}-\eqref{ad1}. Let $Q$ be the unique positive radial ground state of $ \sqrt{-\Delta} Q+Q-|Q|^{q-2}Q=0$, we have
\begin{Thm}\label{th1.1}
Let $N \!\ge \! 2$, $\mu>0$ and $q \in\big(2,2+\frac{2}{N}\big)$, then there exists a $a_*\!=\!a_*(\mu)\!>\!0$ such that, for any $a\in (0,a_*]$, $m(a)$ is achieved by some $u_a\in V(a)$ and $u_a$ is a ground state of \eqref{eq1.1}-\eqref{ad1}. Furthermore, we have
\begin{enumerate}
\item   For any fixed $a >0$, $u_a \in S(a)$ exists for any $\mu >0$ sufficiently small and
\begin{equation*} 
\|u_a\|_{\dot{H}^{1/2}}^2 \to 0,\ \ F_{\mu}(u_a) \to 0 \quad \text{as} \ \mu \to 0;
\end{equation*}
\item  For any fixed $\mu >0$, letting $a\to 0$, then for any ground state $u_{a}\in V(a)$, we have
\begin{equation*}
\frac{m(a)}{a^{\frac{2q(1-\gamma_q)}{2-q\gamma_q}}}\to -K_{N,q},\ \
\frac{\lambda_{a}}{a^{\frac{2(q-2)}{2-q\gamma_q}}}\to
-\frac{2q(1-\gamma_q)}{2-q\gamma_q}K_{N,q},\ \
\frac{\|u_{a}\|^2_{\dot{H}^{1/2}}}{a^{\frac{2q(1-\gamma_q)}{2-q\gamma_q}}}\to \frac{2q\gamma_q}{2-q\gamma_q}K_{N,q}.
\end{equation*}
In addition, we have
\begin{equation*}
\frac{1}{\alpha} u_{a}\big(\frac{x}{\beta}\big)\to Q \ \  \text{in}\ H^{1/2}(\R^N )\ \ \text{as}\ a\to 0^+,
\end{equation*}
where $\alpha\!=\!\frac{a\beta^{\frac{N}{2}}}{\|Q\|_2}$, $\beta\!=\!\Big[ \frac{\mu a^{q-2}}{\|Q\|^{q-2}_2} \Big]^{\frac{2}{2-q\gamma_q}}$, $K_{N,q}\!=\!\frac{(2-q\gamma_q)\gamma_q}{2q(1-\gamma_q)}\cdot \|Q\|^{\frac{2(2-q)}{2-q\gamma_q}}_2\mu^{\frac{2}{2-q\gamma_q}}\!>\!0$ and $\gamma_q\!=\!\frac{N(q-2)}{q}$.
\end{enumerate}
\end{Thm}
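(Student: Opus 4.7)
The proof splits naturally into three blocks corresponding to the three assertions: the existence of $u_a$, the $\mu\to 0$ asymptotics, and the $a\to 0^+$ asymptotics.

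For existence, I would implement the standard local-minimization scheme adapted to the energy critical setting. First I would analyze the fiber map $t\mapsto F_\mu(t\star u)$ using the Gagliardo-Nirenberg inequality (for the mass-subcritical term controlled by $\|u\|_{\dot H^{1/2}}^{q\gamma_q}$) and the fractional Sobolev inequality $\|u\|_{2^*}\le S_*^{-1/2}\|u\|_{\dot H^{1/2}}$ to locate $\rho_0=\rho_0(\mu,a)$ so that $m(a)<0\le \inf_{\partial V(a)}F_\mu$ for $a\le a_*$ small. Then I would take a minimizing sequence in $V(a)$, replace it by its Schwarz symmetrization (which preserves $\|\cdot\|_p$ for $p\in[1,\infty]$ and does not increase $\|\cdot\|_{\dot H^{1/2}}$), pass to a weak limit, and prove strong convergence. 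The crucial point is to rule out concentration of the critical term: using a Brezis-Lieb decomposition one needs the strict inequality $m(a)<\frac{1}{2N}S_*^N$, which is a consequence of $m(a)<0$ once $a$ is small. This is the main technical obstacle and the reason for the restriction $a\le a_*$. Radial compact embedding $H^{1/2}_{\mathrm{rad}}\hookrightarrow L^q(\mathbb{R}^N)$ for $2<q<2^*$ (valid since $N\ge 2$) handles the subcritical term.

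For the $\mu\to 0$ asymptotics (assertion (1) with $a$ fixed), I would use the definition of $m(a)$ together with test functions of the form $t\star \phi$ to obtain an explicit upper bound on $m(a)$ of order $O(\mu^{\alpha})\to 0^-$. Since $u_a\in V(a)$ satisfies $\|u_a\|_{\dot H^{1/2}}<\rho_0(\mu)\to 0$ as $\mu\to 0$ (from the structural analysis of $V(a)$), and $F_\mu(u_a)=m(a)\to 0^-$ combined with the Gagliardo-Nirenberg and fractional Sobolev inequalities forces the $\dot H^{1/2}$-norm to vanish, both conclusions follow.

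The $a\to 0^+$ asymptotics is the most delicate. The strategy is to undo the scale of $u_a$ via $w_a(x):=\alpha^{-1}u_a(x/\beta)$ with the prescribed $\alpha,\beta$, so that $\|w_a\|_2=\|Q\|_2$ and the Euler-Lagrange equation becomes
\begin{equation*}
\sqrt{-\Delta}\,w_a+\widetilde{\lambda}_a\,w_a=|w_a|^{q-2}w_a+\varepsilon_a\,|w_a|^{2^*-2}w_a,
\end{equation*}
where, by the choice of $\alpha,\beta$, the coefficient $\varepsilon_a$ of the critical term is a positive power of $a$ and therefore vanishes as $a\to 0^+$. Testing $m(a)$ against the scaling of $Q$ gives the sharp upper bound $m(a)\le -K_{N,q}\,a^{\frac{2q(1-\gamma_q)}{2-q\gamma_q}}(1+o(1))$. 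The matching lower bound comes from the scaling identity $F_\mu(u_a)\ge I_\infty(w_a)-o(1)$ where $I_\infty$ is the limiting mass-subcritical energy, combined with the optimal Gagliardo-Nirenberg constant attained at $Q$ (Frank-Lenzmann). Once the three numerical asymptotics are in hand, $\{w_a\}$ is a bounded Palais-Smale sequence for the limit problem $\sqrt{-\Delta}w+w=|w|^{q-2}w$ at the ground-state level with $\widetilde\lambda_a\to 1$, so its radial symmetrization converges (after extracting a subsequence) strongly in $H^{1/2}$ to a positive radial ground state; the characterization of $Q$ as the unique (up to translation and sign) positive radial ground state then identifies the limit.

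The principal obstacles are: (i) establishing the strict energy gap $m(a)<\frac{1}{2N}S_*^N$ to recover compactness at the critical level in the first part, and (ii) quantifying the rate at which the critical term becomes subleading under the rescaling in the third part, in order to convert a weak PS property for $\{w_a\}$ into strong $H^{1/2}$-convergence to $Q$ without invoking any additional uniqueness beyond Frank-Lenzmann.
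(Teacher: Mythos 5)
Your overall three-block structure matches the paper, and the rescaling idea for the $a\to 0^+$ asymptotics (defining $w_a=\alpha^{-1}u_a(\cdot/\beta)$ and letting the critical coefficient vanish) is exactly the paper's mechanism. However there are three substantive problems.

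\emph{Existence.} You propose Schwarz symmetrization plus compact radial embedding and claim that the key obstruction is the energy gap $m(a)<\tfrac{1}{2N}\mathcal{S}^N$. The paper takes a different route: concentration-compactness with translations, a subadditivity inequality $m(a)\le m(\alpha)+m(\sqrt{a^2-\alpha^2})$ (strict if either is attained) coming from a scaling argument (Lemma \ref{lem3.4}), and strong convergence via the structural coefficient $\beta_0=\tfrac12-\tfrac{1}{2^*\mathcal{S}^{2^*/2}}\rho_0^{2^*-2}>0$. In fact, for the local minimizer the operative barrier is not the level $m(a)<\tfrac{1}{2N}\mathcal{S}^N$ (which is automatic since $m(a)<0$) but the constraint $\|u\|_{\dot H^{1/2}}<\rho_0<\mathcal{S}^{N/2}$, which makes a Talenti bubble impossible inside $V(a)$; the level threshold $m(a)+\tfrac{1}{2N}\mathcal{S}^N$ only becomes essential at the mountain-pass level in Theorem \ref{th1.2}. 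Also, your Schwarz route needs an additional argument to exclude mass escape of a radial remainder to infinity; the paper's subadditivity handles dichotomy directly and is cleaner.

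\emph{The $\mu\to 0$ limit.} You invoke $\rho_0=\rho_0(\mu)\to 0$ as $\mu\to 0$. In the paper $\rho_0=\bigl[\tfrac{(2-q\gamma_q)2^*\mathcal{S}^{2^*/2}}{2(2^*-q\gamma_q)}\bigr]^{1/(2^*-2)}$ is \emph{independent} of $\mu$ (only $a_*$ depends on $\mu$), so this step of your argument does not hold as stated. The paper instead gets the vanishing of $\|u_a\|_{\dot H^{1/2}}$ from the two-sided Pohozaev-based estimate \eqref{ba1}, whose upper bound carries the explicit factor $\mu^{2/(2-q\gamma_q)}\to 0$.

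\emph{The $a\to 0^+$ identification.} You pass to a weak limit $v$ of the rescaled minimizers and assert that it is a positive radial ground state of the limit problem, then invoke uniqueness (Frank--Lenzmann). The gap is that the weak limit could a priori be any nontrivial positive radial solution, not necessarily a ground state; uniqueness of ground states does not by itself identify the limit. The paper closes this gap with a key observation it highlights explicitly: from the optimal Gagliardo--Nirenberg constant (attained at $Q$) every nontrivial solution $w$ of $\sqrt{-\Delta}w+w=|w|^{q-2}w$ satisfies $\|w\|_2\ge \|Q\|_2$, while weak lower semicontinuity gives $\|v\|_2\le\liminf\|v_{a_k}\|_2=\|Q\|_2$, forcing $\|v\|_2=\|Q\|_2$ and hence that $v$ is a ground state. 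Only then does Frank--Lenzmann uniqueness give $v=Q$ and upgrade weak to strong convergence. Without this $L^2$-minimality step your identification of the limit is unjustified.
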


Next, we study the existence and asymptotic behaviour of a second solution of \eqref{eq1.1}-\eqref{ad1}. Denote
$$\mathcal{S}=\inf _{u \in {\dot{H}}^{1/2}(\mathbb{R}^{N})\setminus \{0\} }    \frac{\left\| u\right\|_{\dot{H}^{1/2}}^{2}}{||u||_{2^{*}}^{2}}, $$
refer to Section 2 for details (see also \cite{EDgE}).  We have

\begin{Thm}\label{th1.2}
Let $N \!\ge \! 2$, $q \in\big(2,2+\frac{2}{N}\big)$, $\mu>0$ and $a_*=a_*(\mu)>0$ given by Theorem \ref{th1.1}. Then, for any $a\in (0,a_*]$, there exists a second solution $v_a\in S(a)$ to \eqref{eq1.1}-\eqref{ad1} such that
\begin{equation*}
0<F_\mu(v_a)<m(a)+\frac{\mathcal{S}^N}{2N}.
\end{equation*}
Furthermore, we have
\begin{enumerate}
\item  For any fixed $\mu >0$, it holds that
\begin{equation*}
\|v_a\|_{\dot{H}^{1/2}}^2 \to \mathcal{S}^{N},~~~~ F_{\mu}(v_a) \to \frac{\mathcal{S}^{N}}{2N} \quad \text{as} \quad a \to 0;
\end{equation*}
\item   For any fixed $a >0$, $v_a \in S(a)$ exists for any $\mu >0$ sufficiently small and
\begin{equation*}
\|v_a\|_{\dot{H}^{1/2}}^2 \to \mathcal{S}^{N},~~~~ F_{\mu}(v_a) \to \dfrac{\mathcal{S}^N}{2N} \quad \text{as}  \quad \mu \to 0.
\end{equation*}
\end{enumerate}
\end{Thm}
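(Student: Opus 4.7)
My plan is to use the local minimizer $u_a\in V(a)$ produced by Theorem~\ref{th1.1} as the base point of a constrained mountain-pass argument. With the $L^2$-preserving dilation $t\star u(x)=t^{N/2}u(tx)$ one has
\[
F_\mu(t\star u)=\frac{t}{2}\|u\|_{\dot H^{1/2}}^2-\frac{\mu}{q}t^{N(q-2)/2}\|u\|_q^q-\frac{1}{2^*}t^{2^*/2}\|u\|_{2^*}^{2^*},
\]
so the critical term $t^{2^*/2}=t^{N/(N-1)}$ dominates and drives $F_\mu(t\star u_a)\to -\infty$ as $t\to\infty$, while \eqref{b11} already gives the barrier $\inf_{\partial V(a)}F_\mu\ge 0>m(a)$. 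Fixing $t_0$ large enough that $e_a:=t_0\star u_a$ satisfies $F_\mu(e_a)<m(a)$, I define
\[
\Gamma_a:=\{\gamma\in C([0,1];S(a)):\,\gamma(0)=u_a,\ \gamma(1)=e_a\},\quad \sigma(a):=\inf_{\gamma\in\Gamma_a}\max_{s\in[0,1]}F_\mu(\gamma(s)).
\]
Every such path crosses $\partial V(a)$, so $\sigma(a)\ge\inf_{\partial V(a)}F_\mu>0$, which will eventually yield $F_\mu(v_a)=\sigma(a)>0$.

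\textbf{The critical threshold (the main obstacle).} The central difficulty is to prove the strict bound
\[
\sigma(a)<m(a)+\frac{\mathcal S^N}{2N},
\]
which is the ``first bubbling'' level below which Palais--Smale sequences remain compact. I will realize it by constructing a test path that absorbs a single, suitably cut off and $L^2$-normalized fractional Aubin--Talenti extremizer $U_\varepsilon$ for $\mathcal S$ (see \cite{EDgE}) into $u_a$, tuned by a dilation parameter to preserve the mass. The bubble alone contributes exactly $\mathcal S^N/(2N)$; the remaining task is to show that the nonlocal cross-term $\langle u_a,U_\varepsilon\rangle_{\dot H^{1/2}}$ and the $L^q$ interaction $\mu\int(|u_a+U_\varepsilon|^q-|u_a|^q-|U_\varepsilon|^q)$ together yield a correction of strictly negative leading order in $\varepsilon$. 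The delicate point, compared to the local case treated in \cite{JL}, is to estimate these nonlocal interactions for the half-wave using the sharp asymptotics of the $\dot H^{1/2}$-extremizers, which I expect to be the heaviest computation.

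\textbf{Compactness and existence.} Following the enlargement trick of Jeanjean~\cite{JJ}, I enrich the min-max to paths deformed by $s\star\cdot$, producing a Palais--Smale sequence $\{w_n\}\subset S(a)$ at level $\sigma(a)$ that additionally satisfies the Pohozaev-type relation
\[
\tfrac{1}{2}\|w_n\|_{\dot H^{1/2}}^2-\tfrac{\mu\gamma_q}{2}\|w_n\|_q^q-\tfrac{\gamma_{2^*}}{2}\|w_n\|_{2^*}^{2^*}\to 0.
\]
This combined with the energy bound forces $\{w_n\}$ bounded in $H^{1/2}(\R^N)$ and the Lagrange multipliers $\lambda_n$ to converge to some $\lambda_a<0$. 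Extracting a weak limit $v_a$ and passing to its $\lambda_a$-fiber, I apply a concentration-compactness/profile decomposition for the nonlocal operator $\sqrt{-\Delta}$ to rule out vanishing and ``bubble at infinity'': any escape of mass would cost at least $\mathcal S^N/(2N)$ on top of the remaining energy, contradicting Step~2. Hence $w_n\to v_a$ strongly in $H^{1/2}$, $v_a$ solves \eqref{eq1.1}--\eqref{ad1}, and $F_\mu(v_a)=\sigma(a)\in(0,\,m(a)+\mathcal S^N/(2N))$; in particular $v_a\neq u_a$ because $F_\mu(v_a)>0>m(a)=F_\mu(u_a)$.

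\textbf{Asymptotic behaviour.} Both limits in (1) and (2) follow by squeezing $F_\mu(v_a)=\sigma(a)$ between the Sobolev lower bound $F_\mu(v_a)\ge\frac{1}{2N}\mathcal S^N-o(1)$, valid whenever $\|v_a\|_q\to 0$, and the threshold upper bound just established, then inserting into the Pohozaev identity $P_\mu(v_a)=0$. For part~(1), Theorem~\ref{th1.1}(2) yields $m(a)\to 0$ as $a\to 0^+$ and interpolation with $\|v_a\|_2=a$ forces $\|v_a\|_q\to 0$, giving $F_\mu(v_a)\to\mathcal S^N/(2N)$ and $\|v_a\|_{\dot H^{1/2}}^2\to\mathcal S^N$. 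For part~(2), Theorem~\ref{th1.1}(1) supplies $m(a)\to 0$ as $\mu\to 0^+$ for fixed $a$, and since the subcritical term in both $F_\mu$ and $P_\mu$ is weighted by $\mu$, the same squeeze delivers the claimed limits.
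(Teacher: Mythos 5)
Your plan mirrors the paper's overall architecture --- a constrained mountain pass built over $u_a$, a strict energy estimate $M(a)<m(a)+\frac{\mathcal S^N}{2N}$ obtained by splicing a cut-off Aubin--Talenti bubble onto a ground state, and compactness below that level via a Pohozaev-corrected Palais--Smale sequence --- so the high-level strategy is essentially correct. The trouble is that the one step you label the ``delicate point'' is exactly where the paper introduces a specific device that your sketch does not contain, and without it the argument does not close.

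The paper does \emph{not} place $U_\varepsilon$ on top of $u_a$ and try to extract a negative correction from the $\dot H^{1/2}$ cross term. Instead it translates the ground state, working with $u_{a_n}(\cdot-y_n)+tU_{\varepsilon_n}$ and letting $|y_n|\to\infty$. The radial decay $|u_a(z)|\le Ca|z|^{-N/2}$ (Radial Lemma of \cite{BL}) together with the Euler--Lagrange equation satisfied by $u_a$ then show (Lemma~\ref{lem4.8}) that the cross term in $\dot H^{1/2}$ is bounded by $\|U_{\varepsilon_n}\|_2^2$, hence is negligible. After that the strictly negative correction is carried entirely by the subcritical term: $\|U_{\varepsilon_n}\|_q^q\sim\varepsilon_n^{N-\frac{(N-1)q}{2}}$ with $\tfrac1N<N-\tfrac{(N-1)q}{2}<1$ exactly when $2<q<2+\tfrac2N$, so it dominates both $\|U_{\varepsilon_n}\|_2^2$ (which is $O(\varepsilon_n)$ for $N>2$ and $O(\varepsilon_n|\log\varepsilon_n|)$ for $N=2$) and the $O(\varepsilon_n)$ error in the bubble energy. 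Your proposal to make the cross term itself deliver the gain is closer to the $-\Delta$ strategy of \cite{JL}; in the nonlocal setting this has not been shown to produce the right order in all dimensions $N\ge2$ (cf.\ Remark~1.2, where the paper observes that the \cite{JL} route only reaches $N\ge4\sigma$), and the authors deliberately route around it.

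A second gap is the mass constraint. Since $u_a+tU_\varepsilon$ leaves $S(a)$, you cannot feed it directly into your $\Gamma_a$. The paper first shrinks the mass to $a_n^2=a^2-2t_1^2\|U_{\varepsilon_n}\|_2^2$ (so that $u_{a_n}(\cdot-y_n)+tU_{\varepsilon_n}$ has $L^2$-norm at most $a$, again by Lemma~\ref{lem4.8}), and then applies the mass-rescaling $g(t)(x)=\theta(t)^{\frac{N-1}2}h(t)(\theta(t)x)$ of Lemma~\ref{lem4.5}, which restores the $L^2$-norm to $a$ and only \emph{decreases} the energy because $\frac{(N-1)q}{2}-N<0$; together with the identity $M^0(a)=M(a)$ (Proposition~\ref{prop4}) this puts the resulting path into a usable admissible class. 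Your ``tuned by a dilation parameter to preserve the mass'' does not substitute for this: the $L^2$-preserving dilation $t\star u$ does not carry $u_a+sU_\varepsilon$ to a fixed sphere, and a rescaling that does fix the sphere must additionally be shown not to raise the energy. Finally, in the asymptotics you use the lower bound $F_\mu(v_a)\ge\frac{1}{2N}\mathcal S^N-o(1)$ as though $\|v_a\|_q\to0$ were enough; one must also exclude $\|v_a\|_{\dot H^{1/2}}\to0$, which the paper does by exploiting that $v_a\in\Lambda^-(a)$, so that $\Psi''_{v_a}(1)<0$ forces $\|v_a\|_{\dot H^{1/2}}^2<K\|v_a\|_{2^*}^{2^*}$ and hence $\ell\ge\mathcal S^N$ rather than $\ell=0$.
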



Denote the set of ground states to \eqref{eq1.1}-\eqref{ad1} by $\mathcal{M}_a$. If $u$ solves \eqref{eq1.1} with some $\lambda\in\R$, then $\psi(t,x)=e^{-i\lambda t}u(x)$ satisfies the time-dependent equation
\begin{equation}\label{ab3}
i\partial_t \psi-\sqrt{-\Delta}\psi+\mu|\psi|^{q-2}\psi+|\psi|^{2^*-2}\psi=0, \ \ (t,x)\in \R\times\R^N,
\end{equation}
where $i=\sqrt{-1}$, $\mu\geq0$, $q \!\in\!\big(2,2+\frac{2}{N}\big)$, $2^*:=\frac{2N }{N-1}$. 
Suppose the local well posed-ness of solutions to \eqref{ab3} holds for $\mu>0$ (it holds for the case $\mu=0$, see Theorem 1.3 in \cite{BGL}), then we can consider the stability of $\mathcal{M}_a$. The set $\mathcal{M}_a$ is said to be \textbf{stable} under the Cauchy flow of \eqref{ab3} if for any $\varepsilon>0$, there exists $\delta>0$ such that for any $\psi_0 \in H^{1/2}(\R^N )$ satisfying
\[\text{dist}_{H^{1/2}(\R^N )}(\psi_0,\mathcal{M}_a)<\delta,\]
then the solution $\psi(t,\cdot)$ of \eqref{ab3} with $\psi(0,\cdot)=\psi_0$ satisfies
\[ \sup_{t\in [0,T)}\text{dist}_{H^{1/2}(\R^N )}\big(\psi(t,\cdot),\mathcal{M}_a\big)<\varepsilon,\]
where $T$ is the maximal existence time for $\psi(t,\cdot)$. We have
%
%
%
%

\begin{Thm}\label{th1.5}
Let $N\ge2$, $2<q<2+\frac{2}{N}$, $\mu>0$ and $a_*=a_*(\mu)>0$ be given in Theorem \ref{th1.1}. Assume that the local well posed-ness of solutions to \eqref{ab3} holds. Then, for any $a\in (0,a_*]$, the set $\mathcal{M}_a$ is compact, up to translation, and it is stable.
\end{Thm}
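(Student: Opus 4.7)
The plan is to follow the classical Cazenave--Lions orbital stability strategy, exploiting the local minimum structure built in \eqref{b11} together with the compactness of minimizing sequences already produced in the proof of Theorem \ref{th1.1}.

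I would first establish compactness of $\mathcal{M}_a$ up to translation. Any sequence $\{u_n\} \subset \mathcal{M}_a$ automatically lies in $V(a)$: ground states satisfy $F_\mu(u_n)=m(a)<0\leq \inf_{\partial V(a)} F_\mu$, so their $\dot H^{1/2}$-norm is strictly below $\rho_0$. Each such sequence is therefore a minimizing sequence for $m(a)$, and the concentration-compactness analysis performed in the proof of Theorem \ref{th1.1} rules out $L^2$-vanishing (via the mass-subcritical term combined with $m(a)<0$) and the loss of an energy-critical Sobolev bubble (via the strict separation $m(a) < m(a)+\mathcal{S}^{N}/(2N)$). Hence, after suitable translations, $u_n \to u^{\ast}$ strongly in $H^{1/2}(\R^N)$ with $u^{\ast} \in \mathcal{M}_a$, which gives the asserted compactness.

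For the stability part I argue by contradiction. Suppose there exist $\varepsilon_0>0$, initial data $\psi_{0,n} \in H^{1/2}(\R^N)$ with $\text{dist}_{H^{1/2}}(\psi_{0,n},\mathcal{M}_a)\to 0$, and times $t_n$ in the existence interval such that $\text{dist}_{H^{1/2}}(\psi_n(t_n),\mathcal{M}_a)\geq \varepsilon_0$, where $\psi_n$ solves \eqref{ab3} with $\psi_n(0,\cdot)=\psi_{0,n}$. Continuity of $\|\cdot\|_2$ and of $F_\mu$ on $H^{1/2}$, combined with $\mathcal{M}_a\subset S(a)$ and $F_\mu\equiv m(a)$ on $\mathcal{M}_a$, yields $\|\psi_{0,n}\|_2 \to a$ and $F_\mu(\psi_{0,n})\to m(a)$. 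Mass and energy conservation for \eqref{ab3}, guaranteed by the assumed local well-posedness, then give $\|\psi_n(t)\|_2\to a$ and $F_\mu(\psi_n(t))\to m(a)$ uniformly in $t$. Renormalising to $w_n(t):=a\psi_n(t)/\|\psi_n(t)\|_2\in S(a)$ preserves these limits. Since $F_\mu(w_n(t))\to m(a)<0$ uniformly in $t$ while $F_\mu\geq 0$ on $\partial V(a)$, the continuous curve $t\mapsto \|w_n(t)\|_{\dot H^{1/2}}$ cannot reach $\rho_0$ on $[0,t_n]$ for large $n$: otherwise at a crossing time $s_n$ one would have $F_\mu(w_n(s_n))\geq 0$, contradicting the uniform negativity. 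Therefore $w_n(t_n)\in V(a)$, so $\{w_n(t_n)\}$ is a minimizing sequence for $m(a)$. By the compactness step it converges, up to translations, to some $u^{\ast}\in\mathcal{M}_a$ in $H^{1/2}$; since the scalar $a/\|\psi_n(t_n)\|_2\to 1$ and $\psi_n(t_n)$ is bounded in $H^{1/2}$, we also have $w_n(t_n)-\psi_n(t_n)\to 0$ in $H^{1/2}$. This contradicts $\text{dist}_{H^{1/2}}(\psi_n(t_n),\mathcal{M}_a)\geq \varepsilon_0$.

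The main obstacle is the trapping step: propagating the constraint $w_n(t)\in V(a)$ forward in time requires both the assumed local well-posedness of \eqref{ab3} in $H^{1/2}$ (so that $t\mapsto \psi_n(t)$ is $H^{1/2}$-continuous) and energy conservation along the Cauchy flow. Once these are in place, the strict inequality $m(a)<0\leq \inf_{\partial V(a)} F_\mu$ together with the critical-level gap $\mathcal{S}^{N}/(2N)$ does all the work and the contradiction is automatic.
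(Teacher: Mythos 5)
Your proposal is essentially the argument the paper intends: compactness of $\mathcal{M}_a$ up to translation is exactly the content of Lemma \ref{lem3.5} (every minimizing sequence for $m(a)$, after translation, converges strongly in $H^{1/2}(\R^N)$), and the stability is then the classical Cazenave--Lions contradiction built on mass/energy conservation and the strict separation $m(a)<0\le\inf_{\partial V(a)}F_\mu$; note that the paper itself does not spell out a proof of Theorem~\ref{th1.5}, treating it as a standard consequence of this compactness, so you have supplied the missing details and they are correct. One small imprecision is worth flagging: you attribute the exclusion of a Sobolev bubble in the compactness step to the separation $m(a)<m(a)+\mathcal{S}^{N}/(2N)$, which is vacuous. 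In Lemma~\ref{lem3.5} the remainder $w_n$ (with $\|w_n\|_2\to 0$, hence $\|w_n\|_q\to 0$ by Gagliardo--Nirenberg) is shown to vanish in $\dot{H}^{1/2}$ via the coercivity estimate
\begin{equation*}
F_\mu(w_n)\ge\beta_0\|w_n\|^2_{\dot{H}^{1/2}}+o_n(1),\qquad
\beta_0=\frac{1}{2}-\frac{1}{2^*\mathcal{S}^{2^*/2}}\rho_0^{2^*-2}>0,
\end{equation*}
which holds precisely because the minimizing sequence remains strictly inside the ball $\|\cdot\|_{\dot{H}^{1/2}}<\rho_0$ and $\beta_0>0$ by the choice of $\rho_0$ in Lemma~\ref{lem3.1}; the critical energy level $\mathcal{S}^N/(2N)$ only enters in the mountain-pass analysis of Theorem~\ref{th1.2}, not here. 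Finally, in your trapping step it is worth recording (from the compactness just established, since translations preserve $\|\cdot\|_{\dot{H}^{1/2}}$) that $\sup_{u\in\mathcal{M}_a}\|u\|_{\dot{H}^{1/2}}<\rho_0$ strictly, which guarantees the renormalised flow $w_n(t)$ starts at a uniformly positive distance from $\partial V(a)$ and makes the continuation argument rigorous.
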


\noindent\textbf{Remark 1.1.}
In Theorem \ref{th1.1}, the ground state can be taken by a real valued, positive, radially symmetric decreasing function and the mass threshold value $a_*=a_*(\mu)>0$ is explicit, see \eqref{constanta_*}. Moreover, $a_*>0$ can be taken arbitrary large by taking $\mu>0$ small enough. Under the local well posed-ness of \eqref{ab3}, Theorem \ref{th1.5} indicates that a small $L^2$-subcritical term leads to a stabilization of standing waves corresponding to \eqref{ab3}, see e.g. \cite{BGL,DVF}.
Our results complete the works of \cite{BGL,BJ} and extend the results of Jeanjean et al. \cite{JJ,JL}, which studied nonlinear Schr\"{o}dinger equations with combined nonlinearities, to half-wave. We also extend the results of H. J. Luo and Z. T. Zhang \cite{ZL} to Sobolev critical case.   
The works of \cite{BGL} focused on half wave in purely mass-supercritical and energy subcritical setting, and showed that the positive ground states are unique up to space shift and phase multiplication.
Therefore, our results indicate that the mass subcritical term enriches the set of solutions to \eqref{eq1.1}-\eqref{ad1}.

Furthermore, it is worth pointing out that the precise description of the local minimizers $\{u_a\}$ is new in $L^2$-constraint variational problems. Usually, proof of such a precise description depends on a uniqueness of positive solutions to the corresponding limit problem, see \cite{GyWy}. However, this uniqueness result for following limit problem
\begin{equation*}
 \sqrt{-\Delta}u + u = |u|^{q-2}u \ \ \text{in}\ \R^N
\end{equation*}
is still unknown.
We develop another way to derive such precise description of $\{u_a\}$, the main ingredients are more accurate estimates of the ground state energy and Lagrange multiplier together with a mass minimality of the ground state solution, see Lemma \ref{lem5.21} for details.

In the proof of Theorems \ref{th1.1}-\ref{th1.2}, the main difficulty lies in that the term $|u|^{2^*-2}u$ is mass super-critical and energy critical. Although we have $\inf_{u\in S(a)}F_\mu(u)=-\infty$, the mass subcritical term $\mu|u|^{q-2} u$ created a convex-concave geometry of $F_\mu|_{S(a)}$ if $a>0$ is small. So it is reasonable to study a local minimization problem
\begin{equation*}
m(a):=\inf_{u\in V(a)}F_\mu
(u)<0\le\inf_{u\in\partial V(a)}F_\mu(u),
\end{equation*}
where $V(a)\!:=\!\{u\in S(a): \|u\|_{\dot{H}^{1/2}}\!<\!\rho_0 \}$ and $\partial V(a)\!:=\!\{u\in S(a): \|u\|_{\dot{H}^{1/2}}\!=\!\rho_0 \}$. We use $m(a)\!<\!0$ to exclude the vanishing of any minimizing sequences for $m(a)$. Moreover, a scaling argument gives the subadditivity of $m(a)$:
$$  m(a)\le m(\alpha)+m(\sqrt{a^2-\alpha^2}),~~\forall \alpha \in (0,a), $$
which is used to rule out the dichotomy of any minimizing sequences for $m(a)$. Therefore, $m(a)$ is attained by some $u_a\!\in\! V(a)$.

To prove Theorem \ref{th1.1}, we follow some ideas from \cite{LY}. The main ingredient is the refined upper bound of $m(a)$ (see Section 3), i.e.
\begin{equation*}
 m(a)<-K_{N,q}a^{\frac{2q(1-\gamma_q)}{2-q\gamma_q}}<0.
\end{equation*}
This refinement needs to keep the testing functions staying in the admissible set $V(a)$. We overcome this difficulty by utilising the properties of the unique positive radial ground state $Q$ of
\begin{equation}\label{1.8}
 \sqrt{-\Delta}u + u = |u|^{q-2}u \ \ \text{in}\ \R^N,
\end{equation}
where $2<q<2+\frac{2}{N}$. Noticing that the uniqueness of positive solutions to \eqref{1.8} is still unknown, so it brings new difficulties in deriving the precise description of the local minimizers $\{u_a\}$. Fortunately, by the fractional Gagliardo-Nirenberg inequality
\begin{equation*}
 ||u||_{q} \leq C_{opt}(N,q) \|u\|_{\dot{H}^{1/2}}^{\gamma_q} \|u\|_{2}^{1-\gamma_q}, \quad \forall u \in {H}^{1/2}(\mathbb{R}^{N}),
\end{equation*}
and
\begin{equation*}
\begin{aligned}
\frac{1}{C_{opt}(N,q)}=\inf\limits_{u\in H^{1/2}\setminus\{0\}} \frac{\|u\|^{\gamma_q}_{\dot{H}^{1/2}}\|u\|^
{1-\gamma_q}_2}{\|u\|_{q}}
=\Big[\frac{\gamma_q}{1-\gamma_q}\Big]^{\frac{\gamma_q}{2}}
(1-\gamma_q)^{\frac{1}{q}}\|Q\|^{\frac{q-2}{q}}_2,
\end{aligned}
\end{equation*}
we conclude that every ground state of \eqref{1.8} has the least $L^2(\R^N)$ norm among all nontrivial solutions to \eqref{1.8}. This is enough to complete the proof of Theorem \ref{th1.1}, see Section 3.

To prove Theorem \ref{th1.2}, we follow the approach of \cite{JL}. Since $\inf_{u\in S(a)}F_\mu(u)=-\infty$, we can obtain a bounded Palais-Smale sequence by using the Pohozaev constraint approach (see \cite{NsAe,NSaE}). However, it is very difficult to prove the compactness of a Palais-Smale sequence at positive energy level.  Recall that, N. Soave \cite{NSaE} raised an open question on how to obtain the second solution of the Sobolev critical equation \eqref{eq2} since the associated energy functional constrained on the $L^2$-sphere admits a convex-concave geometry. In \cite{JL}, L. Jeanjean and T. T. Le observed that a better energy estimate may contribute to the understanding of this open question and they succeeded in solving this question if $N \ge 4$.
Motivated by \cite{JL}, we drive a better energy estimate on the associated mountain pass level (see Proposition \ref{prop3}), i.e.
\begin{equation*}
0<M^0(a)=M(a)<m(a)+\frac{\mathcal{S}^{N}}{2N}.
\end{equation*}
As $m(a)\!<\!0$, the revised upper bound $m(a)+\frac{\mathcal{S}^{N}}{2N}$ is better than the well-known energy threshold $\frac{\mathcal{S}^{N}}{2N}$. This is enough to guarantee the compactness of Palais-Smale sequence at the mountain pass level $M(a)>0$.\\



\noindent\textbf{Remark 1.2.}
With slightly modifications, Theorem \ref{th1.1} and Theorem \ref{th1.5} in this paper can be easily extended to the general fractional nonlinear Schr\"{o}dinger equation \eqref{ab1} with $0<\sigma<1$, $\mu>0$, $N>2\sigma$, $p=2_\sigma^\ast=\frac{2N}{N-2\sigma}$ and $2<q<2+\frac{4\sigma}{N}$. Notice that when extending Theorem \ref{th1.2}, if we follow the approach of \cite{JL} to \eqref{ab1}, in our compactness argument, the energy threshold of such compactness $m_\sigma(a)+\frac{\sigma}{N}\mathcal{S}^{N}$ can be carry out only for dimensions $N\ge4\sigma$, see Proposition \ref{prop3}, where $m_\sigma(a)$ is the ground state energy relating to \eqref{ab1}. Turning to directly use the radial superposition of a ground state obtained in Theorem \ref{th1.1} and the Aubin-Talenti bubble as the test function in estimating mountain pass energy threshold, then Theorem \ref{th1.2} can be extended to ${(-\Delta)}^{\sigma}$ for $0<\sigma<1$ in all dimensions $N>2\sigma$. These are motivated by a very recently work \cite{WW}.
\\

This paper is organized as follows, in Section 2, we give some preliminary results. The proof of Theorem \ref{th1.1}, Theorem \ref{th1.2} are given in Section 3, Section 4 respectively. \\

\textbf{Notations:}~~~~
The Hilbert space $H^{\sigma}(\mathbb{R}^{N})$ is defined as
$$
H^{\sigma}(\mathbb{R}^{N}) :=\big\{u \in L^{2}(\mathbb{R}^{N}) :(-\Delta)^{\frac{\sigma}{2}} u \in L^{2}(\mathbb{R}^{N})\big\},
$$
with the inner product and norm given respectively by
$$
(u, v) :=\int_{\mathbb{R}^{N}}(-\Delta)^{\frac{\sigma}{2}} u(-\Delta)^{\frac{\sigma}{2}} v+\int_{\mathbb{R}^{N}} u v,\quad \|u\| :=\Big(\big\|(-\Delta)^{\frac{\sigma}{2}} u\big\|_{2}^{2}+\|u\|_{2}^{2}\Big)^{\frac{1}{2}},
$$
where
$\big\|(-\Delta)^{\frac{\sigma}{2}} u\big\|_{2}^{2} :=\frac{C_{N,\sigma}}{2} \int_{\mathbb{R}^{N}} \int_{\mathbb{R}^{N}} \frac{|u(x)-u(y)|^{2}}{|x-y|^{N+2 \sigma}}$ and $C_{N,\sigma}$ is some positive normalization constant. The space ${\dot{H}}^{\sigma}(\R^{N})$ is defined as the completion of $C_{0}^{\infty}(\R^{N})$ under the norm
$$   ||u||_{\dot{H}^{\sigma}(\R^{N})}= \Big(\int_{\R^{N}} |(-\Delta)^{\sigma/2}u|^2dx\Big )^{\frac{1}{2}}.$$
The dual space of $X$ is denoted by $X'$. 
$L^{p}=L^{p}(\mathbb{R}^{N})~(1<p\leq\infty)$ is the Lebesgue space with the standard norm $||u||_{p}=\big(\int_{{\mathbb{R}^N}} {{|u(x)|}^p dx}\big)^{{1}/{p}}$. We use $``\rightarrow"$ and $``\rightharpoonup"$ to denote the strong and weak convergence in the related function spaces respectively. $C$ and $C_{i}$ will denote positive constants. 

\medskip

\section{Preliminaries}

\setcounter{equation}{0} 
In this section, we give some preliminary results. The following lemma is the fractional Sobolev embedding.
\begin{Lem}(\cite{EDgE}, Theorem 6.5) \label{lem2.1}
Let $0<\sigma<1$ and $N>2\sigma$. Then there exists a constant $\mathcal{S}=\mathcal{S}(N,\sigma)>0$ such that
\begin{equation*} \label{equ2.1}
\mathcal{S}=\inf _{u \in {\dot{H}}^{\sigma}(\mathbb{R}^{N})\setminus \{0\} }    \frac{\big\|(-\Delta)^{\frac{\sigma}{2}} u\big\|_{2}^{2}}{||u||_{2^{*}_{\sigma}}^{2}},
\end{equation*}
where $2^{*}_{\sigma}=\frac{2N }{N-2\sigma}$. Moreover, $H^{\sigma}(\mathbb{R}^{N})$ is continuously embedded into $L^{r}(\mathbb{R}^{N})$ for any $2 \leq r \leq 2^{*}_{\sigma}$ and compactly embedded into $L_{\text {loc}}^{r}(\mathbb{R}^{N})$ for every $2 \leq r < 2^{*}_{\sigma}$.
\end{Lem}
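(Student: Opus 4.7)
The plan is to establish the three assertions in the lemma separately: (i) the sharp Sobolev inequality with best constant $\mathcal{S}$, (ii) the continuous embedding $H^\sigma(\R^N)\hookrightarrow L^r(\R^N)$ for $2\le r\le 2^*_\sigma$, and (iii) the local compact embedding $H^\sigma(\R^N)\hookrightarrow L^r_{\mathrm{loc}}(\R^N)$ for $2\le r<2^*_\sigma$.

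For (i), I would work on the Fourier side via the Riesz potential representation. Given $u\in\dot{H}^\sigma(\R^N)$, set $f:=(-\Delta)^{\sigma/2}u\in L^2(\R^N)$, so that $u=I_{2\sigma}\ast f$, where $I_{2\sigma}(x)=c_{N,\sigma}|x|^{-(N-2\sigma)}$ is the Riesz kernel. The Hardy--Littlewood--Sobolev inequality then gives
\[
\|u\|_{2^*_\sigma}\le C(N,\sigma)\,\|f\|_2=C(N,\sigma)\,\|(-\Delta)^{\sigma/2}u\|_2,
\]
which proves that the infimum $\mathcal{S}$ is strictly positive. The equivalent definition through the Gagliardo seminorm then follows from Plancherel's identity. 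The existence of an extremal (i.e.\ that $\mathcal{S}$ is attained) can be obtained by combining Schwarz symmetrization, which reduces any minimizing sequence to radial nonincreasing functions, with P.-L.\ Lions' concentration-compactness principle: after fixing the normalization $\|(-\Delta)^{\sigma/2}u_n\|_2=1$, vanishing is ruled out by the HLS bound and dichotomy by the strict subadditivity of the minimization problem, so a translated and rescaled subsequence converges strongly to a minimizer.

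For (ii), once the critical embedding $\dot{H}^\sigma\hookrightarrow L^{2^*_\sigma}$ is in hand, the continuous inclusion $H^\sigma(\R^N)\hookrightarrow L^r(\R^N)$ for every $r\in[2,2^*_\sigma]$ follows by complex (or real) interpolation with the trivial embedding $H^\sigma\hookrightarrow L^2$. For (iii), I would prove a fractional Rellich-Kondrachov result: for a bounded sequence $\{u_n\}$ in $H^\sigma(\R^N)$, the Fourier characterisation
\[
\|u\|_{H^\sigma}^2\simeq\int_{\R^N}(1+|\xi|^2)^{\sigma}|\widehat{u}(\xi)|^2\,d\xi
\]
shows that $\{u_n\}$ is equicontinuous in $L^2$ (truncating high frequencies contributes $\mathcal{O}(R^{-2\sigma})$ in $L^2$, while low frequencies are uniformly continuous in translations by Plancherel), hence by the Fr\'echet-Kolmogorov criterion $\{u_n\}$ is relatively compact in $L^2_{\mathrm{loc}}(\R^N)$. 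Interpolating this strong $L^2_{\mathrm{loc}}$-convergence with the uniform $L^{2^*_\sigma}$-bound from (i) gives strong convergence in $L^r_{\mathrm{loc}}$ for every $r<2^*_\sigma$.

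I expect the main obstacle to be the attainment of the optimal constant $\mathcal{S}$ in (i), because the minimizing problem is invariant under the noncompact group of translations and dilations and thus does not admit a direct compactness argument; the concentration-compactness principle, together with rearrangement, is the essential tool to break the lack of compactness. Everything else in the lemma — the inequality itself, the interpolation giving (ii), and the local compactness (iii) — is then a routine consequence once the critical Sobolev inequality is secured.
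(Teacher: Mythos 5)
The paper does not prove Lemma \ref{lem2.1}; it is imported verbatim as a citation to \cite{EDgE}, Theorem 6.5, and the authors use it as a black box throughout (for the embedding $\dot H^{1/2}\hookrightarrow L^{2^*}$ in \eqref{a112}, and for the Aubin--Talenti bubbles in Section 4). So there is no in-paper proof to compare against, but your reconstruction of the standard argument is correct: HLS via the Riesz potential $u=I_{2\sigma}\ast(-\Delta)^{\sigma/2}u$ yields the sharp embedding and hence $\mathcal{S}>0$; interpolation between $L^2$ and $L^{2^*_\sigma}$ gives the continuous embedding range $2\le r\le 2^*_\sigma$; and the Fourier--truncation/Fr\'echet--Kolmogorov argument gives $L^2_{\mathrm{loc}}$ compactness, which then upgrades by interpolation to $L^r_{\mathrm{loc}}$ for $r<2^*_\sigma$. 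Two small remarks. First, the lemma as stated only asserts that the infimum $\mathcal{S}$ is a positive constant; it does \emph{not} claim $\mathcal{S}$ is attained, so the concentration-compactness / rearrangement discussion you flag as the ``main obstacle'' is actually extraneous to what the statement demands (attainment of $\mathcal{S}$ is indeed true and is used later in the paper when building test functions, but that is a separate fact). Second, the high-frequency cut contributes $\mathcal{O}(R^{-\sigma})$ to $\|u_{\geq R}\|_{L^2}$ (equivalently $\mathcal{O}(R^{-2\sigma})$ to its square); be careful to state which quantity the bound refers to, though this does not affect the conclusion.
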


We also require the fractional Gagliardo-Nirenberg inequality.
\begin{Lem} \label{lem2.2}
Let $0<\sigma<1$, $N>2\sigma$ and $r\in(2,2^{*}_{\sigma})$. Then there exists a constant $C(N,\sigma,r)$ such that
\begin{equation} \label{equ2.2}
 ||u||_{r} \leq C(N,\sigma,r) \big\|(-\Delta)^{\frac{\sigma}{2}} u\big\|_{2}^{\gamma_r} \left\|u\right\|_{2}^{1-\gamma_r}, \quad \forall u \in {H}^{\sigma}(\mathbb{R}^{N}),
\end{equation}
where $2^{*}_{\sigma}=\frac{2N }{N-2\sigma}$ and $\gamma_r=\frac{N(r-2)}{2r\sigma }$.
\end{Lem}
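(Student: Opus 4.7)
The statement is the standard fractional Gagliardo--Nirenberg interpolation inequality, and my plan is to derive it as a direct consequence of Hölder interpolation together with the fractional Sobolev embedding already recorded as Lemma \ref{lem2.1}. First, I would verify that the exponent $\gamma_r=\frac{N(r-2)}{2r\sigma}$ is the unique scaling-compatible choice, by checking that under the dilation $u\mapsto u_\lambda(x)=u(\lambda x)$ the two sides of \eqref{equ2.2} transform with the same power of $\lambda$; since $\|u_\lambda\|_r^r=\lambda^{-N}\|u\|_r^r$, $\|u_\lambda\|_2^2=\lambda^{-N}\|u\|_2^2$, and $\|(-\Delta)^{\sigma/2}u_\lambda\|_2^2=\lambda^{2\sigma-N}\|(-\Delta)^{\sigma/2}u\|_2^2$, the equality of scaling exponents forces $\gamma_r=\frac{N(r-2)}{2r\sigma}$, and one checks that $\gamma_r\in(0,1)$ precisely for $r\in(2,2^*_\sigma)$.

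Next, for any $r\in(2,2^*_\sigma)$, I would pick the interpolation parameter $\theta\in(0,1)$ determined by
\begin{equation*}
\frac{1}{r}=\frac{1-\theta}{2}+\frac{\theta}{2^*_\sigma}.
\end{equation*}
Solving this identity and using $2^*_\sigma-2=\frac{4\sigma}{N-2\sigma}$ and $\frac{2^*_\sigma}{2^*_\sigma-2}=\frac{N}{2\sigma}$, a short calculation gives $\theta=\frac{N(r-2)}{2r\sigma}=\gamma_r$. Then Hölder's inequality with exponents $\frac{2}{(1-\theta)r}$ and $\frac{2^*_\sigma}{\theta r}$ applied to the splitting $|u|^r=|u|^{(1-\theta)r}\cdot|u|^{\theta r}$ yields
\begin{equation*}
\|u\|_r \leq \|u\|_2^{1-\gamma_r}\,\|u\|_{2^*_\sigma}^{\gamma_r}.
\end{equation*}

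Finally, I would invoke the fractional Sobolev embedding from Lemma \ref{lem2.1}, namely $\|u\|_{2^*_\sigma}\leq \mathcal{S}^{-1/2}\,\|(-\Delta)^{\sigma/2}u\|_2$, to replace $\|u\|_{2^*_\sigma}^{\gamma_r}$ by a constant multiple of $\|(-\Delta)^{\sigma/2}u\|_2^{\gamma_r}$, producing \eqref{equ2.2} with $C(N,\sigma,r)=\mathcal{S}^{-\gamma_r/2}$. Since the argument is purely interpolation plus a known embedding, there is no real obstacle; the only point requiring care is the bookkeeping of exponents to confirm $\theta=\gamma_r$, which is the arithmetic step above. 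Note also that \emph{optimal} constants are not claimed in Lemma \ref{lem2.2}, so one need not identify extremals here, in contrast to the optimization formula recorded later for $C_{opt}(N,q)$ in the half-wave case $\sigma=1/2$.
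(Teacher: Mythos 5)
Your proof is correct, and since the paper simply states Lemma \ref{lem2.2} as a known fact (referring to the literature only for the sharp constant $C_{opt}$), the interpolation-plus-Sobolev argument you give is exactly the standard route one would take to prove it. The exponent bookkeeping is right: from $\frac{1}{r}=\frac{1-\theta}{2}+\frac{\theta}{2^*_\sigma}$ and $\frac{1}{2^*_\sigma}=\frac{N-2\sigma}{2N}$ one gets $\frac{1}{r}=\frac{1}{2}-\frac{\theta\sigma}{N}$, hence $\theta=\frac{N(r-2)}{2r\sigma}=\gamma_r$, and the Hölder exponents $\frac{2}{(1-\theta)r}$ and $\frac{2^*_\sigma}{\theta r}$ are indeed conjugate. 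Combining with $\|u\|_{2^*_\sigma}\le\mathcal{S}^{-1/2}\|(-\Delta)^{\sigma/2}u\|_2$ from Lemma \ref{lem2.1} gives \eqref{equ2.2} with $C(N,\sigma,r)=\mathcal{S}^{-\gamma_r/2}$. The scaling check is a nice sanity check but not logically necessary once the interpolation is carried out; you could also note explicitly that $\gamma_r\in(0,1)$ for $r\in(2,2^*_\sigma)$, which is what makes the Hölder step admissible, but this is immediate from $\theta$ being an interpolation parameter.
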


From Lemma C.2 of \cite{FL}
, the sharp constant $C_{opt}(N,\sigma,r)$ can be obtained by considering
\begin{equation*}
\frac{1}{C_{opt}(N,\sigma,r)}=\inf_{\phi\in H^{\sigma}(\R^N)\setminus\{0\} } \Big\{  \frac{\|(-\Delta)^{\frac{\sigma}{2}}\phi\|^{\gamma_r}_{2}\|\phi\|^{(1-\gamma_r)}_2 }{\|\phi\|_{r}}\Big\}.
\end{equation*}
We can assume that $Q$ is the unique positive radial ground state solution of
\begin{equation}\label{a11}
(-\Delta)^{\sigma} u+u=|u|^{r-2}u\ \ \text{in} \ \R^N,
\end{equation}
see \cite{FL} for details. By the following Pohozaev's identity to \eqref{a11}
\begin{equation*}
\|(-\Delta)^{\sigma/2}Q\|^2_{2}=\gamma_r\|Q\|^{r}_{r}
=\frac{\gamma_r}{1-\gamma_r}\|Q\|^2_2,
\end{equation*}
we get
\begin{equation}\label{a33}
\begin{aligned}
C^{r}_{opt}(N,\sigma,r)&=\Big[\frac{1-\gamma_r}{\gamma_r}\Big]^{\frac{r\gamma_r}{2}}\frac{1}{1-\gamma_r}\frac{1}{\|Q\|^{r-2}_2}.\\
\end{aligned}
\end{equation}


In the following, we only consider the case $N\ge2$ and $\sigma=\frac{1}{2}$.

\begin{Lem}  \label{lem2.7}(\cite{CW})
Let $N\!\geq\! 2$, $q \!\in\! \big(2,2+\frac{2}{N}\big)$. If $u \!\in\! {H}^{1/2}(\mathbb{R}^{N})$ is a nonnegative weak solution of
\begin{align*}
 \sqrt{-\Delta} u=\lambda u+\mu |u|^{q-2} u+ |u|^{2^*-2}u,
\end{align*}
then the Pohozaev identity holds true
\begin{equation}\label{eq2.5}
0=P_{\mu}(u):=||u||_{\dot{H}^{1/2}}^2
-\mu\gamma_q{||u||}_{q}^{q}-\|u\|^{2^*}_{2^*},
\end{equation}
where $\gamma_q=\frac{N(q-2)}{q}$.
\end{Lem}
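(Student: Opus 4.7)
My plan is to derive the identity by combining two independent integral identities: the Nehari identity (obtained by testing the equation against $u$ itself) and a scaling-type identity (obtained by differentiating the action along the dilation path $u_t(x)=u(tx)$). From the weak formulation, taking $\varphi=u$ immediately gives the Nehari identity
\begin{equation*}
\|u\|_{\dot{H}^{1/2}}^2=\lambda\|u\|_2^2+\mu\|u\|_q^q+\|u\|_{2^*}^{2^*}.
\end{equation*}
For the second identity, I would introduce the action functional $J(u)=\frac{1}{2}\|u\|_{\dot{H}^{1/2}}^2-\frac{\lambda}{2}\|u\|_2^2-\frac{\mu}{q}\|u\|_q^q-\frac{1}{2^*}\|u\|_{2^*}^{2^*}$, for which $u$ is a critical point in $H^{1/2}(\R^N)$.

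Next, using the Fourier representation of $\sqrt{-\Delta}$, I would establish the homogeneity law $(\sqrt{-\Delta}u_t)(x)=t\,(\sqrt{-\Delta}u)(tx)$, which yields $\|u_t\|_{\dot{H}^{1/2}}^2=t^{1-N}\|u\|_{\dot{H}^{1/2}}^2$ and $\|u_t\|_p^p=t^{-N}\|u\|_p^p$ for every admissible $p$. Substituting into $J(u_t)$ and differentiating at $t=1$ gives
\begin{equation*}
\frac{d}{dt}J(u_t)\Big|_{t=1}=\tfrac{1-N}{2}\|u\|_{\dot{H}^{1/2}}^2+\tfrac{N\lambda}{2}\|u\|_2^2+\tfrac{N\mu}{q}\|u\|_q^q+\tfrac{N}{2^*}\|u\|_{2^*}^{2^*}.
\end{equation*}
Since $u$ is a critical point and $\frac{d}{dt}u_t|_{t=1}$ is a legitimate direction, this derivative vanishes. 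Using $\frac{2N}{2^*}=N-1$ and combining with the Nehari identity by eliminating $\lambda\|u\|_2^2$, the term $\|u\|_{\dot{H}^{1/2}}^2$ balances out and one is left exactly with
\begin{equation*}
\|u\|_{\dot{H}^{1/2}}^2=\mu\,\tfrac{N(q-2)}{q}\|u\|_q^q+\|u\|_{2^*}^{2^*}=\mu\gamma_q\|u\|_q^q+\|u\|_{2^*}^{2^*},
\end{equation*}
which is the desired identity $P_\mu(u)=0$.

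The main obstacle is the rigorous justification of the vanishing of $\frac{d}{dt}J(u_t)|_{t=1}$, since the natural tangent direction $-x\cdot\nabla u$ is not an admissible $H^{1/2}$ test function for a generic weak solution. There are two standard routes to overcome this: (i) invoke regularity theory for the half-wave equation (so that $u$ is smooth enough and has sufficient decay for $x\cdot\nabla u\in H^{1/2}(\R^N)$, after which the limit in the difference quotient can be taken cleanly using the explicit scaling formulas above); or (ii) pass to the Caffarelli–Silvestre harmonic extension $U$ of $u$ in $\R^{N+1}_+$, derive a local Pohozaev identity by multiplying the extended equation by $x\cdot\nabla_x U$ and integrating on a half-ball, and then let the radius tend to infinity, using the decay of $U$ to discard the boundary terms. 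Either route suffices; I would prefer route (ii), since the cited result in \cite{CW} is formulated in exactly that framework, and it avoids any assumption beyond $H^{1/2}$ on the solution itself. Once the derivative identity is justified, the algebraic combination with the Nehari identity is immediate and yields \eqref{eq2.5}.
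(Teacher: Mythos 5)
The paper does not prove this lemma itself; it is quoted directly from \cite{CW}, where the Pohozaev identity for $(-\Delta)^{s}$ is derived via the Caffarelli--Silvestre extension, which is precisely your preferred route (ii). Your algebra is correct: the scaling laws $\|u_t\|_{\dot H^{1/2}}^2=t^{1-N}\|u\|_{\dot H^{1/2}}^2$, $\|u_t\|_p^p=t^{-N}\|u\|_p^p$, the computation of $\tfrac{d}{dt}J(u_t)\big|_{t=1}$, and its elimination of $\lambda\|u\|_2^2$ against the Nehari identity indeed produce $\|u\|_{\dot H^{1/2}}^2=\mu\gamma_q\|u\|_q^q+\|u\|_{2^*}^{2^*}$ (note $\gamma_{2^*}=1$ for $2^*=\tfrac{2N}{N-1}$). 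You also correctly single out the one non-algebraic step, namely justifying the vanishing of the scaling derivative for a mere $H^{1/2}$ weak solution, which is exactly what \cite{CW} settles via the extension.

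One small inaccuracy in your sketch of route (ii): the Pohozaev multiplier for the extended problem on $\R^{N+1}_+$ must be the \emph{full} scaling vector field $(x,y)\cdot\nabla U$ (including the transverse variable $y$), not merely $x\cdot\nabla_x U$. Using only the horizontal part does not close the identity, because the dilation invariance of the weighted Dirichlet energy $\int_{\R^{N+1}_+} y^{1-2s}|\nabla U|^2$ requires scaling in all $N+1$ directions, and it is the boundary term on $\{y=0\}$ generated by the full multiplier that reproduces the coefficient on $\|u\|_{\dot H^{1/2}}^2$. With this correction your outline reproduces the argument of \cite{CW}.
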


To overcome the difficulty that $\inf\limits _{u \in S(a)} F_{\mu}(u)=-\infty$, we introduce the Pohozaev set:
\begin{equation}\label{eq2.4}
\Lambda(a)=\left\{u \in S(a) : P_{\mu}(u)=0\right\}.
\end{equation}
As a consequence of Lemma \ref{lem2.7}, we known that any nonnegative critical point of $F_{\mu}|_{S(a)}$ stays in $\Lambda(a)$. The properties of $\Lambda(a)$ are related to the minimax structure of $F_{\mu}|_{S(a)}$, and in particular to the behavior of $F_{\mu}$ with respect to dilations preserving the $L^2$-norm. To be more precise, for $u \in S(a)$ and $t\in \mathbb{R}$, let
\begin{equation*} \label{eq1.12}
 t\star u(x) :=t^{\frac{N}{2}} u\left(t x\right),~~~~\mbox{for}~~~~\mbox{a.e.}~~~~x \in \mathbb{R}^{N}.
\end{equation*}
It results that $t\star u \in S(a)$, and hence it is natural to study the fiber map
\begin{equation} \label{eq1.13}
\Psi_{u}(t) :=F_{\mu}(t\star u)=\frac{t}{2} ||u||_{\dot{H}^{1/2}}^2-\mu \frac{t^{\frac{q\gamma_q}{2}}}{q} {||u||}_{q}^{q}-\frac{ t^{\frac{2^*}{2}}}{2^*}\|u\|^{2^*}_{2^*}.
\end{equation}

For $u \in S(a)$, $t \in (0,\infty)$ and the fiber $\Psi_{u}$ introduced in (\ref{eq1.13}), we have
\begin{equation} \label{equa2.9}
\Psi'_{u}(t)= \frac{1}{2}||u||_{\dot{H}^{1/2}}^2
-\frac{\mu\gamma_q}{2}t^{\frac{q\gamma_q}{2}-1}{||u||}_{q}^{q}-\frac{t^{\frac{2^*}{2}-1}}{2}\|u\|^{2^*}_{2^*} =\frac{1}{2t}P_{\mu}(t \star u),
\end{equation}
where $P_{\mu}$ is defined by (\ref{eq2.5}). From (\ref{equa2.9}), we can see immediately that:

\begin{Cor}\label{cor2.1}
Let $u \in S(a)$. Then $t \in (0,\infty)$ is a critical point for $\Psi_{u}$ if and only if $ t\star u \in \Lambda(a)$.
\end{Cor}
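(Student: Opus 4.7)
The plan is to read off the statement directly from equation (2.9), which already expresses the derivative of the fiber map as a rescaled Pohozaev functional. First I would observe that the dilation $t\star u(x)=t^{N/2}u(tx)$ preserves the $L^2$-norm for every $t>0$, so that $t\star u\in S(a)$ whenever $u\in S(a)$. This places $t\star u$ automatically in the ambient set $S(a)$ that appears in the definition (2.4) of $\Lambda(a)$, and reduces the membership in $\Lambda(a)$ to the single condition $P_\mu(t\star u)=0$.

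Next I would invoke the identity
\begin{equation*}
\Psi'_u(t)=\frac{1}{2t}P_\mu(t\star u),
\end{equation*}
which is exactly (2.9). Because $t\in(0,\infty)$ the factor $1/(2t)$ is nonzero, so $\Psi'_u(t)=0$ is equivalent to $P_\mu(t\star u)=0$. Combining this equivalence with the observation $t\star u\in S(a)$ from the previous step gives both implications: if $t$ is a critical point of $\Psi_u$, then $t\star u\in S(a)$ and $P_\mu(t\star u)=0$, hence $t\star u\in\Lambda(a)$; conversely, if $t\star u\in\Lambda(a)$ then $P_\mu(t\star u)=0$ and therefore $\Psi'_u(t)=0$.

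There is essentially no obstacle here; the statement is a direct translation of formula (2.9) into the language of the fiber map and the Pohozaev set, once one records that the dilation preserves the mass constraint. The only place that requires a moment of care is noting that $t>0$ (so dividing by $2t$ is legitimate), which is why the statement restricts attention to $t\in(0,\infty)$ rather than allowing $t=0$.
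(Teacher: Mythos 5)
Your proof is correct and follows exactly the paper's reasoning: the paper derives Corollary~\ref{cor2.1} ``immediately'' from the identity $\Psi'_u(t)=\frac{1}{2t}P_\mu(t\star u)$ in \eqref{equa2.9}, together with the observation that $t\star u\in S(a)$. Your only additions---spelling out that $1/(2t)\neq 0$ for $t>0$ and that the dilation preserves the $L^2$-norm---are precisely the small checks the paper leaves implicit.
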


In particular, $u \in \Lambda(a)$ if and only if $1$ is a critical point of $\Psi_{u}$. For future convenience, we also recall that the map $(t, u) \in (0,\infty) \times H^{1/2}(\R^N) \mapsto t\star u \in H^{1/2}(\R^N)$ is continuous (The proof is similar to Lemma 3.5 in \cite{bTEv}).

We also need the following result, where $T_uS(a)$ denotes the tangent space to $S(a)$ in $u$.
\begin{Lem} \label{lem2.8}(\cite{bTEv}, Lemma 3.6)
For $u \in S(a)$ and $t\in (0,\infty)$ the map
$$
T_{u}S(a) \rightarrow T_{t\star u}S(a), \quad \varphi \mapsto t\star \varphi
$$
is a linear isomorphism with inverse $\psi \mapsto \frac{1}{t}\star\psi$.
\end{Lem}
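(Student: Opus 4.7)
The plan is to verify, in sequence, four properties of the map $\varphi \mapsto t\star\varphi$, relying on the standard identification
$$T_u S(a) = \{\varphi \in H^{1/2}(\R^N) : \langle u,\varphi\rangle_{L^2} = 0\},$$
which follows because $S(a)$ is the zero level set of the $C^1$ functional $u \mapsto \|u\|_2^2 - a^2$, whose derivative $2\langle u,\cdot\rangle_{L^2}$ is nonzero on $S(a)$. The four items I would check are: (a) linearity of $\varphi \mapsto t\star\varphi$; (b) that it sends $T_u S(a)$ into $T_{t\star u} S(a)$; (c) that $\psi \mapsto (1/t)\star\psi$ is a two-sided inverse; and (d) that both maps are continuous on $H^{1/2}(\R^N)$.

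For (a), linearity is immediate from the definition $(t\star\varphi)(x)=t^{N/2}\varphi(tx)$. For (b), given $\varphi \in T_u S(a)$ the substitution $y=tx$ yields
$$\int_{\R^N}(t\star u)(t\star\varphi)\,dx = t^N\int_{\R^N} u(tx)\varphi(tx)\,dx = \int_{\R^N} u(y)\varphi(y)\,dy = 0,$$
which places $t\star\varphi$ in $T_{t\star u}S(a)$. For (c), I would record the group property $s\star(t\star\varphi)=(st)\star\varphi$ for all $s,t>0$, a direct check from the definition; taking $s=1/t$ gives $(1/t)\star(t\star\varphi)=\varphi$ and, symmetrically, $t\star((1/t)\star\psi)=\psi$, so $(1/t)\star$ is a two-sided inverse.

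For (d), continuity on $H^{1/2}(\R^N)$ follows because $\|t\star u\|_2=\|u\|_2$, and a change of variables $(x',y')=(tx,ty)$ in the Gagliardo formulation of the seminorm recorded in the excerpt gives $\|t\star u\|_{\dot{H}^{1/2}}^2 = t\,\|u\|_{\dot{H}^{1/2}}^2$; hence $t\star$ is a bounded operator on $H^{1/2}$ for each fixed $t>0$, and the same holds for $(1/t)\star$. Combining (a)--(d) yields the claimed linear topological isomorphism. There is no genuine obstacle here; the one pitfall to avoid is treating the statement as a purely algebraic isomorphism and omitting the continuity verification in (d).
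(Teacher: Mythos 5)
Your proof is correct and essentially self-contained, whereas the paper offers no proof at all for this lemma---it simply cites Bartsch--Soave (Lemma 3.6 of the referenced work), so your write-up supplies details the paper omits. Your four checks are the right ones: identifying $T_uS(a)=\{\varphi\in H^{1/2}(\R^N):\int_{\R^N}u\varphi=0\}$ as the kernel of the derivative of the constraint map, the change-of-variables computation showing $t\star$ preserves this orthogonality, the group law $s\star(t\star\varphi)=(st)\star\varphi$ producing the two-sided inverse, and the norm identities $\|t\star u\|_2=\|u\|_2$, $\|t\star u\|_{\dot H^{1/2}}^2=t\|u\|_{\dot H^{1/2}}^2$ giving boundedness of $t\star$ and $\tfrac{1}{t}\star$ on $H^{1/2}(\R^N)$. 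Your closing caution---that the statement asserts a \emph{topological} isomorphism between Hilbert subspaces, so continuity must be checked and not treated as obvious---is exactly the point one would want a careful reader to note; it is the only nontrivial analytic ingredient beyond the elementary change of variables.
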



\begin{Lem}\label{lem2.9}(\cite{Py})
Let $u^*$ denote the symmetric decreasing rearrangement of $u$. Then we have
\begin{equation*}
\int_{\R^N}\int_{\R^N}\frac{|u^*(x)-u^*(y)|^2}{|x-y|^{N+1}}\le\int_{\R^N}\int_{\R^N}\frac{|u(x)-u(y)|^2}{|x-y|^{N+1}}.
\end{equation*}
\end{Lem}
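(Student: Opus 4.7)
The plan is to reduce the inequality to a Riesz rearrangement inequality for the heat kernel via the Bochner subordination (semigroup) representation of the $\dot H^{1/2}$ seminorm. First I would replace $u$ by $|u|$: because $(|a|-|b|)^2 \le (a-b)^2$ for all real $a,b$ and because $|u|^{*}=u^{*}$, the Gagliardo seminorm of $|u|$ bounds that of $u$ from below while having the same rearrangement. So one may assume $u\ge 0$.

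Next, let $p_t(z)=(4\pi t)^{-N/2}\exp(-|z|^2/(4t))$ denote the heat kernel and $P_tu=p_t\ast u$. A direct expansion of the square, together with $\int_{\R^N}p_t(x-y)\,dy=1$, gives the pointwise-in-$t$ identity
\begin{equation*}
\tfrac12\int_{\R^N}\!\!\int_{\R^N}|u(x)-u(y)|^2\,p_t(x-y)\,dx\,dy=\|u\|_2^2-\langle u,P_tu\rangle.
\end{equation*}
Multiplying by $t^{-3/2}$, integrating over $t\in(0,\infty)$ and invoking the elementary Gamma-function identity $\int_0^\infty t^{-3/2}p_t(z)\,dt=C_N|z|^{-(N+1)}$ (Fubini is justified because $u\in H^{1/2}$), I obtain the subordination formula
\begin{equation*}
\int_{\R^N}\!\!\int_{\R^N}\frac{|u(x)-u(y)|^2}{|x-y|^{N+1}}\,dx\,dy
\;=\;\widetilde C_N\int_0^\infty t^{-3/2}\bigl(\|u\|_2^2-\langle u,P_tu\rangle\bigr)\,dt,
\end{equation*}
and the same formula with $u$ replaced by $u^*$.

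Since rearrangement preserves the $L^2$ norm, $\|u\|_2=\|u^*\|_2$, it suffices to show $\langle u^*,P_tu^*\rangle\ge\langle u,P_tu\rangle$ for every $t>0$. This is precisely the classical Riesz rearrangement inequality applied to the nonnegative triple $(u,u,p_t)$, noting that $p_t$ is already radially symmetric and strictly decreasing in $|z|$, hence $p_t^{*}=p_t$. Comparing the two subordination representations pointwise in $t$ and integrating then yields the claim.

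The only delicate point is the subordination formula; once it is in hand, Riesz's inequality does all the work. An alternative I would have in reserve is the direct Almgren--Lieb rearrangement theorem for integrals of the form $\iint J(u(x),u(y))K(|x-y|)\,dx\,dy$ with $J(a,b)=-(a-b)^2$, but it ultimately relies on the same Riesz-type comparison, so the heat-semigroup route is cleaner.
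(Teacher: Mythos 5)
Your argument is correct, and since the paper itself supplies no proof for this lemma --- it simply cites \cite{Py} --- what you have written is a self-contained justification of the cited fact rather than a reproduction of the paper's argument. The route you take (heat-kernel subordination plus Riesz's rearrangement inequality) is the standard modern proof of the fractional P\'olya--Szeg\H{o} inequality, and every step checks out: the reduction to $u\ge 0$ via $\bigl||u(x)|-|u(y)|\bigr|\le|u(x)-u(y)|$ and $|u|^{*}=u^{*}$; the pointwise identity $\tfrac12\iint|u(x)-u(y)|^2 p_t(x-y)\,dx\,dy=\|u\|_2^2-\langle u,P_tu\rangle$; the Gamma-integral computation $\int_0^\infty t^{-3/2}p_t(z)\,dt=C_N|z|^{-(N+1)}$; and the Riesz rearrangement inequality applied to the triple $(u,u,p_t)$ with $p_t^{*}=p_t$. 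One small point worth stating more precisely: the interchange of the $t$-integral with the $(x,y)$-integrals needs no appeal to $u\in H^{1/2}$ --- the integrand $t^{-3/2}|u(x)-u(y)|^2p_t(x-y)$ is nonnegative, so Tonelli gives the identity unconditionally (with both sides possibly $+\infty$), and the inequality then holds for every measurable $u$; membership in $\dot H^{1/2}$ is only what makes the two sides finite. Your fallback remark about the Almgren--Lieb theorem is also apt: it is the route one would take if one wanted the same result for more general kernels or for $p\neq 2$ Gagliardo seminorms, at the cost of a heavier rearrangement lemma, whereas for the $L^2$ case the semigroup representation is indeed the cleaner path.
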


\section{The proof of Theorem 1.1}

In this section, we deal with the case $2<q<2+\frac{2}{N}$, $\mu>0$ and prove Theorem \ref{th1.1}. 
By the fractional Gagliardo-Nirenberg inequality and Sobolev inequality, we get
\begin{equation}\label{a112}
\begin{aligned}
F_\mu(u)&=\frac{1}{2}\|u\|^2_{\dot{H}^{1/2}}-\frac{\mu}{q}\|u\|^{q}_{q}-\frac{1}{2^*}\|u\|^{2^*}_{2^*}\\
&\ge\frac{1}{2}\|u\|^2_{\dot{H}^{1/2}}-\frac{\mu}{q}C^{q}_{opt}\|u\|^{q\gamma_q}_{\dot{H}^{1/2}}\|u\|^{q(1-\gamma_q)}_2-\frac{1}{2^*\mathcal{S}^\frac{2^*}{2}}\|u\|^{2^*}_{\dot{H}^{1/2}}\\
&=h_a(\|u\|_{\dot{H}^{1/2}}).
\end{aligned}
\end{equation}
Let
$$f(a,\rho)=h_a(\rho):=\frac{1}{2}\rho^2-\frac{\mu}{q}C^{q}_{opt}\rho^{q\gamma_q}a^{q(1-\gamma_q)}-\frac{1}{2^*\mathcal{S}^\frac{2^*}{2}}  \rho^{2^*}.$$
Since $\mu>0$ and $q\gamma_q\!<\!2\!<\!2^*$, we have that $h_a(0^+)\!=\!0^{-}$ and $h_a(+\infty)\!=\!-\infty $. Denote
\begin{equation} \label{constanta_*}
a_*\!:=\!\Big\{\frac{q(2^*\!-\!2)}{2\mu C^{q}_{opt}(2^*\!-\!q\gamma_q)}
\Big[\!\frac{(2\!-\!q\gamma_q)2^*S^{\frac{2^*}{2}}}{2(2^*\!-\!q\gamma_q)}\Big]^{\frac{2-q\gamma q}{2^*-2}} \Big\}^{\frac{1}{q(1-\gamma q)}}.
\end{equation}

\begin{Lem}\label{lem3.1}
Let $a\!>\!0$, $\mu>\!0$, $2\!<\!q\!<\!2+\frac{2}{N}$,  $\rho_0\!=\Big[\!\frac{(2\!-\!q\gamma_q)2^*S^{\frac{2^*}{2}}}{2(2^*\!-\!q\gamma_q)}\Big]^{\frac{1}{2^*-2}}$ and $\tilde{\rho}_a:=\Big[\frac{2\mu}{q}C^{q}_{opt} a^{q(1-\gamma_q)}\Big]^{\frac{1}{2-q\gamma_q}}$. Then\\
\indent (i) If $a\!<\!a_*$, $h_a(\rho)$ has a local minimum at negative level and a global maximum at positive level; moreover, there exist $R_0=R_0(a)$ and $R_1=R_1(a)$ such that
$$0<\tilde{\rho}_a< R_0 < \frac{a}{a_*} {\rho}_{0}<{\rho}_0< R_1,~~~~~~~~h_a(R_0)=0=h_a(R_1),~~~~~~~~h_a(\rho)\!>\!0 \Leftrightarrow \rho\!\in\! (R_0,R_1);$$
\indent (ii) If $a\!=\!a_*$, $h_{a_*}(\rho)$ has a local minimum at negative level and a global maximum at level $0$; moreover, we have
$h_{a_*}(\rho_{a_*})=0,~~~~~~~~h_{a_*}(\rho)\!<\!0 \Leftrightarrow \rho\!\in\!(0, {\rho}_{a_*}) \cup ({\rho}_{a_*},+\infty)$.
\end{Lem}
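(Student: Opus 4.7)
The plan is to factor out $\rho^{q\gamma_q}$ and write
\begin{equation*}
h_a(\rho) = \rho^{q\gamma_q}\left[\varphi(\rho) - C_a\right], \quad \varphi(\rho):=\frac{\rho^{2-q\gamma_q}}{2} - \frac{\rho^{2^*-q\gamma_q}}{2^*\mathcal{S}^{2^*/2}}, \quad C_a := \frac{\mu C^q_{opt}}{q}a^{q(1-\gamma_q)},
\end{equation*}
so that the sign and zeros of $h_a$ on $(0,\infty)$ are governed by $\varphi - C_a$, and only $C_a$ carries the $a$-dependence. Because $0 < 2-q\gamma_q < 2^*-q\gamma_q$ (using $q < 2+2/N$), the function $\varphi$ vanishes at $0^+$, tends to $-\infty$ at infinity, and $\varphi'(\rho)=0$ has a unique positive solution, which one computes to be exactly the $\rho_0$ of the statement, with
\begin{equation*}
\varphi(\rho_0) = \frac{(2^*-2)\,\rho_0^{2-q\gamma_q}}{2(2^*-q\gamma_q)}.
\end{equation*}
The formula for $a_*$ is precisely the threshold $C_{a_*} = \varphi(\rho_0)$, so $\varphi - C_a$ inherits the sign structure dictated by whether $C_a$ is below or equal to the maximum of $\varphi$.

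This immediately yields the two cases. In case (ii), $a = a_*$ gives $\varphi(\rho) - C_{a_*} \leq 0$ with equality only at $\rho_0$, so $h_{a_*}$ attains its global maximum $0$ at $\rho_0$ and is strictly negative on $(0,\rho_0)\cup(\rho_0,\infty)$. In case (i), $a < a_*$ gives $C_a < \varphi(\rho_0)$; by the unimodality of $\varphi$, the equation $\varphi = C_a$ has exactly two solutions $R_0 < \rho_0 < R_1$, with $\varphi > C_a$ on $(R_0,R_1)$, i.e.\ $h_a > 0 \Leftrightarrow \rho \in (R_0,R_1)$. For the critical-point structure, I will rewrite $h'_a(\rho) = 0$ as
\begin{equation*}
1 = \mu C^q_{opt}\gamma_q\, a^{q(1-\gamma_q)}\rho^{q\gamma_q-2} + \frac{\rho^{2^*-2}}{\mathcal{S}^{2^*/2}};
\end{equation*}
the right-hand side is strictly convex and blows up at both endpoints, so this equation admits at most two solutions. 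Since $h_a(\rho_0) > 0$ while $h_a(0^+)=0^-$ and $h_a(+\infty) = -\infty$, exactly two critical points $\rho_{\min} < \rho_{\max}$ must occur, and the sign information forces $\rho_{\min}\in(0,R_0)$ (local min, negative level) and $\rho_{\max}\in(R_0,R_1)$ (global max, positive level).

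It remains to justify the chain $0<\tilde\rho_a<R_0<(a/a_*)\rho_0<\rho_0<R_1$. For $\tilde\rho_a<R_0$: the defining identity of $\tilde\rho_a$ gives $\tfrac12\tilde\rho_a^2 = C_a\tilde\rho_a^{q\gamma_q}$, hence $h_a(\tilde\rho_a) = -\frac{\tilde\rho_a^{2^*}}{2^*\mathcal{S}^{2^*/2}}<0$, placing $\tilde\rho_a$ outside $[R_0,R_1]$; since $\tilde\rho_{a_*}^{2-q\gamma_q} = 2C_{a_*} = \frac{(2^*-2)\rho_0^{2-q\gamma_q}}{2^*-q\gamma_q} < \rho_0^{2-q\gamma_q}$, I get $\tilde\rho_a < \tilde\rho_{a_*} < \rho_0 < R_1$, forcing $\tilde\rho_a < R_0$. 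For the central inequalities, set $t := a/a_* \in (0,1)$; using $C_a = t^{q(1-\gamma_q)}\varphi(\rho_0)$, the positivity $h_a(t\rho_0)>0$ reduces to
\begin{equation*}
F(t) := (2^*-q\gamma_q)t^{2-q\gamma_q} - (2-q\gamma_q)t^{2^*-q\gamma_q} - (2^*-2)t^{q(1-\gamma_q)} > 0 \quad \text{on } (0,1).
\end{equation*}
Since $2-q\gamma_q < q(1-\gamma_q) < 2^*-q\gamma_q$ (the gaps being $q-2>0$ and $2^*-q>0$), factoring out $t^{2-q\gamma_q}$ yields
\begin{equation*}
F(t)/t^{2-q\gamma_q} = (2^*-q\gamma_q) - (2^*-2)t^{q-2} - (2-q\gamma_q)t^{2^*-2},
\end{equation*}
which is strictly decreasing in $t$, equals $2^*-q\gamma_q > 0$ at $t=0$, and vanishes at $t=1$ by the very definition of $a_*$; hence $F > 0$ on $(0,1)$, completing the ordering. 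The main obstacle I anticipate is purely bookkeeping: organizing the three exponents $2-q\gamma_q$, $q(1-\gamma_q)$, $2^*-q\gamma_q$ and the normalizations so that $F(1)=0$ falls out of the formula for $a_*$ and the monotonicity of $F(t)/t^{2-q\gamma_q}$ is transparent. Conceptually, once the factorization $h_a = \rho^{q\gamma_q}(\varphi - C_a)$ is in place, every claim of the lemma is a direct consequence of the unimodality of $\varphi$ and elementary power comparisons.
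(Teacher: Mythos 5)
Your factorization $h_a(\rho) = \rho^{q\gamma_q}\,[\varphi(\rho) - C_a]$ with $\varphi$ unimodal is the same basic mechanism the paper uses (the paper calls your $\varphi$ by the name $\psi_a$, though it carries no $a$-dependence). Where you genuinely differ — and streamline the paper — is in dispensing with the auxiliary threshold $a^*$ and the auxiliary function $\phi_a$: you observe that unimodality of $\varphi$, together with at-most-two critical points of $h_a$ and the sign data $h_a(0^+)<0$, $h_a(\rho_0)>0$, $h_a(+\infty)=-\infty$, pins down the critical-point structure directly, whereas the paper separately shows that $h_a$ has exactly two critical points iff $a<a^*$ and then proves $a_*<a^*$. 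Your identity $F(t)/t^{2-q\gamma_q} = (2^*-q\gamma_q)-(2^*-2)t^{q-2}-(2-q\gamma_q)t^{2^*-2}$ for the ordering $R_0 < (a/a_*)\rho_0 < \rho_0$ is a clean reformulation of the paper's inline estimate and is correct, as is the argument $h_a(\tilde{\rho}_a) = -\tilde{\rho}_a^{2^*}/(2^*\mathcal{S}^{2^*/2}) < 0$ combined with $\tilde\rho_a < \tilde\rho_{a_*} < \rho_0 < R_1$ to force $\tilde\rho_a < R_0$.

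There is, however, a genuine error in one step. You assert that the right-hand side of
\[
1 = \mu C^q_{opt}\gamma_q\, a^{q(1-\gamma_q)}\,\rho^{q\gamma_q-2} + \frac{\rho^{2^*-2}}{\mathcal{S}^{2^*/2}}
\]
is strictly convex. This fails for $N \ge 4$: there $2^*-2 = \frac{2}{N-1} < 1$, so $\rho^{2^*-2}$ is strictly concave, and since $q\gamma_q - 4 < 2^*-4$, the concave term dominates the second derivative for large $\rho$, making the sum concave there. Blow-up at both endpoints plus convexity is what would force at most two intersections with a horizontal line; without convexity that inference does not follow. The conclusion you want is still true, but for a different reason: writing the right-hand side as $g(\rho) = A\rho^{-\alpha} + B\rho^{\beta}$ with $\alpha = 2-q\gamma_q>0$, $\beta = 2^*-2>0$, $A,B>0$, one has $g'(\rho) = -A\alpha\rho^{-\alpha-1} + B\beta\rho^{\beta-1}$, which vanishes at the unique point $\rho = \bigl(\frac{A\alpha}{B\beta}\bigr)^{1/(\alpha+\beta)}$; hence $g$ is strictly decreasing then strictly increasing, so $\{g=1\}$ has at most two points. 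Replace ``strictly convex'' with this unimodality argument — which is in substance how the paper treats its own $\phi_a$ — and your proof closes.
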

\begin{proof}

(i) We first prove that $h_a$ has exactly two critical points. In fact,
$$
h'_a(\rho)=0 \Longleftrightarrow \phi_a(\rho)=\mu\gamma_q C^{q}_{opt}a^{q(1-\gamma_q)}, ~~~~\mbox{with}~~~~\phi_a(\rho)=\rho^{2-q\gamma_q}-\frac{1}{\mathcal{S}^\frac{2^*}{2}}\rho^{2^*-q\gamma_q}.
$$
We have, $\phi_a(\rho) \nearrow$ on $[0,\bar{\rho})$, $\searrow$ on $(\bar{\rho},+\infty)$, where $\bar{\rho}\!=\Big[\!\frac{(2\!-\!q\gamma_q)S^{\frac{2^*}{2}}}{2^*\!-\!q\gamma_q}\Big]^{\frac{1}{2^*-2}}$.
Since $q\gamma_q\!<\!2$, we get $\max\limits_{\rho\geq0}\phi_a(\rho)\!=\!\phi_a(\bar{\rho})
\!=\!\frac{2^*\!-\!2}{2^*\!-\!q\gamma_q}\bar{\rho}^{2\!-\!q\gamma_q}
\!>\!\mu \gamma_q C^{q}_{opt}a^{q(1-\gamma_q)}$ if and only if
\begin{equation*} \label{c^*}
a\!<\!a^*\!:=\!\Big\{\frac{2^*\!-\!2}{\mu\gamma_q C^{q}_{opt}(2^*\!-\!q\gamma_q)}
\Big[\!\frac{(2\!-\!q\gamma_q)S^{\frac{2^*}{2}}}{2^*\!-\!q\gamma_q}\Big]^{\frac{2-q\gamma q}{2^*-2}} \Big\}^{\frac{1}{q(1-\gamma q)}}.
\end{equation*}
By $\phi_a(0^+)\!=\!0^+$ and $\phi_a(+\infty)\!=\!-\infty$, we have that $h_a$ has exactly two critical points if $a\!<\!a^*$.

Notice that
$$
h_a(\rho)>0 \Longleftrightarrow \psi_a(\rho)>\frac{\mu}{q}C^{q}_{opt}a^{q(1-\gamma_q)}, ~~~~\mbox{with}~~~~\psi_a(\rho)=\frac{1}{2}\rho^{2-q\gamma_q}-\frac{1}{2^*\mathcal{S}^\frac{2^*}{2}}\rho^{2^*-q\gamma_q}.
$$
It is not difficult to check that $\psi_a(\rho) \nearrow$ on $[0,\rho_0)$, $\searrow$ on $(\rho_0,+\infty)$, where $\rho_0\!=\Big[\!\frac{(2\!-\!q\gamma_q)2^*S^{\frac{2^*}{2}}}{2(2^*\!-\!q\gamma_q)}\Big]^{\frac{1}{2^*-2}}$.
Since $q\gamma_q\!<\!2$, we have $\max\limits\limits_{\rho\geq0}\psi_a(\rho)\!=\!\psi_a(\rho_0)
\!=\!\frac{2^*\!-\!2}{2(2^*\!-\!q\gamma_q)}\rho_0^{2\!-\!q\gamma_q}
\!>\!\frac{\mu}{q}C^{q}_{opt}a^{q(1-\gamma_q)}$ provided
\begin{equation*}
a\!<\!a_*\!:=\!\Big\{\frac{q(2^*\!-\!2)}{2\mu C^{q}_{opt}(2^*\!-\!q\gamma_q)}
\Big[\!\frac{(2\!-\!q\gamma_q)2^*S^{\frac{2^*}{2}}}{2(2^*\!-\!q\gamma_q)}\Big]^{\frac{2-q\gamma q}{2^*-2}} \Big\}^{\frac{1}{q(1-\gamma q)}}.
\end{equation*}
We have $h_a(t)\!>\!0$ on an open interval $(R_0,R_1)$ if and only if $a\!<\!a_*$. It suffices to show that $\frac{\log{x}}{x-1}$ is a monotone decreasing function of $x$, which follows
$$\big(\frac{2^*}{2}\big)^{\frac{2-q\gamma_q}{2^*-2}}\frac{q\gamma_q}{2}<1\Leftrightarrow a_*<a^*,$$
(see Lemma 5.2 of \cite {NsAe}). Combined with $h_a(0^+)=0^{-}$ and $h_a(+\infty)=-\infty$, we see that $h_a$ has a local minimum point at negative level in $(0,R_0)$ and a global maximum point at positive level in $(R_0,R_1)$.
We also deduce that $\psi_a( {\rho}_{0})=\psi_{a_*}( {\rho}_{0})=\frac{\mu}{q}C^{q}_{opt}a^{q(1-\gamma_q)}_*>\frac{\mu}{q}C^{q}_{opt}a^{q(1-\gamma_q)}$ and
\begin{align*}
 \psi_a\big(\frac{a}{a_*} {\rho}_{0}\big)
&=\frac{{\rho}_{0}^{2\!-\!q\gamma_q}}
{2}\Big[1-\frac{ 2-q\gamma_q}{2^*-q\gamma_q}
\big(\frac{a}{a_*}\big)^{2^*-2}\Big]\big(\frac{a}{a_*}\big)^{2-q\gamma_q}
>\frac{2^*-2}{2(2^*-q\gamma_q)}{\rho}_{0}^{2\!-\!q\gamma_q}\big(\frac{a}{a_*}\big)^{2-q\gamma_q} \\
&=\frac{\mu}{q}C^{q}_{opt}a^{q(1-\gamma_q)}_*\big(\frac{a}{a_*}\big)^{2-q\gamma_q}=\frac{\mu}{q}C^{q}_{opt}a^{q(1-\gamma_q)}
\big(\frac{a_*}{a}\big)^{q-2} \\
&>\frac{\mu}{q}C^{q}_{opt}a^{q(1-\gamma_q)},
\end{align*}
which gives $h_a(\rho_0)\!>\!0$, $h_a(\frac{a}{a_*} \rho_0)\!>\!0$, and hence $0\!<\! R_0 \!<\!\frac{a}{a_*} {\rho}_{0}\!<\! {\rho}_{0}\!< \! R_1$.
Finally, the fact that $h_a(\rho)\!\leq\!g_a(\rho)=\frac{1}{2}\rho^{2}-\frac{\mu}{q}C^{q}_{opt} a^{q(1-\gamma_q)}\rho^{q\gamma_q}$ leads to $R_0\!>\!\tilde{\rho}_a$, where $\tilde{\rho}_a:=\Big[\frac{2\mu}{q}C^{q}_{opt} a^{q(1-\gamma_q)}\Big]^{\frac{1}{2-q\gamma_q}}$.

(ii) Similar to the proof of (i), we deduce that
$$
R_0\big(a_*\big)= {\rho}_{a_*}=\rho_{0}=R_1\big(a_*\big),~~~~~~~~\psi_{a_*}( {\rho}_{a_*})=\frac{\mu}{q}C^{q}_{opt}a^{q(1-\gamma_q)}_*,~~~~~~~~\phi_{a_*}({\rho}_{a_*})=\mu \gamma_qC^{q}_{opt}a^{q(1-\gamma_q)}_*.
$$
\end{proof}

Let $a_*>0$ and $\rho_0 >0$ be given in Lemma \ref{lem3.1}. Note that by Lemma \ref{lem3.1}, we have that $f(a_*, \rho_0)=h_{a_*}(\rho_0)= 0$ and $f(a, \rho_0)=h_a(\rho_0) \ge 0$ for all $a \in (0, a_*]$.
Define
\begin{align*}
B_{\rho_0} := \{u \in H^{1/2}: \|u\|_{\dot{H}^{1/2}} < \rho_0\} \quad \text{and} \quad V(a) := S(a) \cap B_{\rho_0}.
\end{align*}
Thus, we can define the local minimization problem:  for any $a \in (0, a_*]$,
\begin{equation*}
m(a) := \inf_{u \in V(a)} F_{\mu}(u).
\end{equation*}
In order to prove that $F_{\mu}$ has a local strict minimum at negative level, we give some lemmas.

\begin{Lem}\label{lem3.3} If $a \in (0, a_*]$, then the following properties hold,
\begin{enumerate}	
\item
$$m(a) = \inf_{u \in V(a)} F_{\mu}(u) < 0 \leq  \inf_{u \in \partial V(a)}F_{\mu}(u).$$	
\item If $m(a)$ is reached, then any ground state is contained in $V(a)$.
\end{enumerate}
\end{Lem}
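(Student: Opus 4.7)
The strategy is to combine the pointwise lower bound $F_\mu(u) \geq h_a(\|u\|_{\dot{H}^{1/2}})$ established in \eqref{a112} with the shape information on $h_a$ provided by Lemma \ref{lem3.1}, and to produce a negative-energy test function inside $V(a)$ by means of the $L^{2}$-preserving dilation $t\star u$ as $t \to 0^{+}$. The key observation is that, because $q \in (2, 2+\tfrac{2}{N})$, one has $q\gamma_q = N(q-2) < 2$, so the fiber map $\Psi_{u}$ has the mass-subcritical term as its leading-order contribution near $t=0$.

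For the boundary estimate $\inf_{\partial V(a)} F_\mu \geq 0$, I would simply note that $\|u\|_{\dot{H}^{1/2}} = \rho_0$ on $\partial V(a)$, so \eqref{a112} gives $F_\mu(u) \geq h_a(\rho_0) = f(a,\rho_0)$. By Lemma \ref{lem3.1}(i), $\rho_0$ lies in the open interval $(R_0, R_1)$ on which $h_a$ is strictly positive whenever $a < a_*$, while Lemma \ref{lem3.1}(ii) gives $h_{a_*}(\rho_0) = 0$ at the threshold; in either case $h_a(\rho_0) \geq 0$. To show $m(a) < 0$, I would fix any $u_0 \in S(a)$ and test with $u_t := t \star u_0$ for small $t>0$. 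From \eqref{eq1.13},
\[
F_\mu(u_t) \;=\; \frac{t}{2}\|u_0\|_{\dot{H}^{1/2}}^2 \;-\; \frac{\mu\, t^{q\gamma_q/2}}{q}\|u_0\|_q^q \;-\; \frac{t^{2^*/2}}{2^*}\|u_0\|_{2^*}^{2^*},
\]
and since $q\gamma_q/2 < 1 < 2^*/2$, the negative subcritical term dominates as $t \to 0^{+}$, so $F_\mu(u_t) < 0$ for $t$ small enough. At the same time $\|u_t\|_{\dot{H}^{1/2}}^2 = t\|u_0\|_{\dot{H}^{1/2}}^2 \to 0$, hence $u_t \in V(a)$ for all sufficiently small $t$, which yields $m(a) \leq F_\mu(u_t) < 0$.

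Part (2) then follows almost automatically: if a minimizer $u$ of $F_\mu$ on the closure $\overline{V(a)}$ is achieved, it cannot sit on $\partial V(a)$, for otherwise (1) would force $m(a) = F_\mu(u) \geq 0$, contradicting the strict negativity of $m(a)$ just established. No real obstacle is anticipated in this lemma; the only delicate ingredient is the $L^{2}$-subcriticality $q < 2+\tfrac{2}{N}$, which is exactly what guarantees that the $q$-term is the dominant term of $\Psi_{u_0}(t)$ near zero and underpins the entire convex-concave geometry of $F_\mu|_{S(a)}$ that justifies introducing the local minimization $m(a)$ in the first place.
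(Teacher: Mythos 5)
Your proof of part (1) is correct and essentially identical to the paper's: the boundary estimate follows from $F_\mu(u)\geq h_a(\rho_0)=f(a,\rho_0)\geq f(a_*,\rho_0)=0$ via Lemma~\ref{lem3.1}, and the strict negativity of $m(a)$ comes from the fiber map $\Psi_{u_0}(t)\to 0^-$ together with $\|t\star u_0\|_{\dot H^{1/2}}^2=t\|u_0\|_{\dot H^{1/2}}^2\to 0$ as $t\to 0^+$.

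Part (2), however, has a genuine gap. You treat ``any ground state'' as a minimizer of $F_\mu$ over $\overline{V(a)}$, and then observe it cannot lie on $\partial V(a)$ because the boundary energy is nonnegative while $m(a)<0$. But the term \emph{ground state} in this paper (see the definition just before \eqref{b11}) means a critical point $w$ of $F_\mu|_{S(a)}$ with $F_\mu(w)=\inf\{F_\mu(u):\ dF_\mu|_{S(a)}(u)=0,\ u\in S(a)\}$; such a $w$ lives a priori anywhere on $S(a)$, not in $\overline{V(a)}$, and there is no reason a priori that it satisfies $\|w\|_{\dot H^{1/2}}\leq\rho_0$. Your argument excludes the boundary of the ball $B_{\rho_0}$ but says nothing about critical points with $\|w\|_{\dot H^{1/2}}>\rho_0$ — and these are exactly the candidates that must be ruled out, since the ``mountain pass'' critical point found later in the paper has this property.

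The missing ingredient is the Pohozaev/fiber-map structure. Because $m(a)$ is reached by a critical point in $V(a)$, any ground state $w$ satisfies $F_\mu(w)\leq m(a)<0$, and by Lemma~\ref{lem2.7} it lies on $\Lambda(a)$, i.e.\ $\Psi_w'(1)=0$. Writing $w=t_w\star v$ for suitable $v\in S(a)$, one has $\Psi_v(t_w)<0$ and $\Psi_v'(t_w)=0$; since $\Psi_v$ has exactly two critical points $t_1<t_2$ (because $\Psi_v''$ has a unique zero), with $t_1\star v\in V(a)$, $\Psi_v(t_1)<0$, and $\Psi_v(t_2)\geq 0$, the point $t_2\star v$ has strictly larger energy than $m(a)$ and cannot be a ground state. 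Hence $t_w=t_1$ and $w=t_1\star v\in V(a)$. This classification of critical points of the fiber map is what your proposal omits.
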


\begin{proof}
 (1) If $u \in \partial V(a)$, by \eqref{a112}, we obtain
\begin{align*}
F_\mu(u)&\ge f(\|u\|_2,\|u\|_{\dot{H}^{1/2}})=f(a,\rho_0)\ge f(a_*,\rho_0)=0.
\end{align*}
For any fixed $u \in S(a)$,
clearly $t\star u \in S(a)$ for any $t \in (0, \infty)$. From \eqref{eq1.13},
\begin{align*}
\Psi_{u}(t) :=F_{\mu}(t\star u)=\frac{t}{2} ||u||_{\dot{H}^{1/2}}^2-\mu \frac{t^{\frac{q\gamma_q}{2}}}{q} {||u||}_{q}^{q}-\frac{ t^{\frac{2^*}{2}}}{2^*}\|u\|^{2^*}_{2^*}.
\end{align*}
Since $\frac{q\gamma_q}{2}<1$ and $\frac{2^*}{2}>1$, it implies that $\Psi_u(t)\to 0^-$ as $t\to 0$. Therefore, there exists $t_0>0$ small enough such that $\|t_0\star u\|^2_{\dot{H}^{1/2}}=t_0\|u\|^2_{\dot{H}^{1/2}}<\rho_0$ and $ F_{\mu}(t_0\star u)=\Psi_u(t_0)<0$. Thus, we know that $m(a)<0$.

(2) 
By Corollary \ref{cor2.1}, 
if $u\in \Lambda(a)$, we get $\Psi'_u(1)=0$. We deduce from \eqref{equa2.9} that if $w \in S(a)$ is a ground state solution there exist a $v \in S(a)$ and a $t_w=\frac{\|w\|^2_{\dot{H}^{1/2}}}{\|v\|^2_{\dot{H}^{1/2}}}\in (0,\infty)$ such that $w(x)=t_w\star v=t^{\frac{N}{2}}_w v(t_wx)\in S(a)$, we have
$$\Psi_v(t_w)=F_\mu(t_w\star v)=F_\mu(w)<0,\quad \Psi'_v(t_w)=P_\mu(t_w\star v)=P_\mu(w)=0 .$$
Namely, $t_w \in (0, \infty)$ is a zero of the function $\Psi_{v}'.$

Since $\Psi_v(t) \to 0^-$, $\|t\star v\|_2 \to 0,$ as $t \to 0$ and $\Psi_v(t) = F_{\mu}(t\star v) \geq 0$ when $t\star v \in \partial V(a) = \{u \in S(a) : ||u||_{\dot{H}^{1/2}}^2 = \rho_0 \}$.
$\Psi_v(t)$ has a local minimum point $t_1 >0$, ${t_1}\star v \in V(a)$ and $F({t_1}\star v) = \Psi_v(t_1) <0$.
Also, from $\Psi_v(t_1)<0$, $\Psi_v(t) \geq  0$ when $t\star v \in \partial V(a)$ and $\Psi_v(t) \to - \infty$ as $t \to  \infty$, $\Psi_v(t)$ has a global maximum point of $t_2 >t_1$. Since ${t_2}\star v$ satisfies $F({t_2}\star v)= \Psi_v(t_2) \geq 0$, we have that $m(a) \leq F({t_1}\star v) < F({t_2}\star v)$. In particular, since $m(a)$ is reached, ${t_2}\star v$ cannot be a ground state. Since $\Psi''_v(t)=0$ has a unique solution, we see that $\Psi'_v(t)$ has at most two zeros. Thus the conclusion (2) follows from these facts above.

\end{proof}

\begin{Lem} \label{lem3.4}
If $a \in (0,a_*]$, the following properties hold.
\begin{enumerate}
		\item $a\mapsto m(a)$ is a continuous map.
		\item  We have for all $\alpha \in (0,a)$ : $m(a) \leq m(\alpha) + m(\sqrt{a^2-\alpha^2})$ and if $m(\alpha)$ or $m(\sqrt{a^2-\alpha^2})$ is reached  then  the inequality is strict.
	\end{enumerate}
\end{Lem}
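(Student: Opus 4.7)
My plan is to prove continuity and subadditivity separately, using throughout the quantitative consequence of Lemma~\ref{lem3.1}(i) that any element of $V(a)$ with $F_\mu<0$ satisfies $\|\cdot\|_{\dot{H}^{1/2}} < R_0(a) < (a/a_*)\rho_0$; this strict bound is the key ingredient keeping all constructed competitors inside $V(\cdot)$.

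For (1), fix $a \in (0, a_*]$ and take $a_n \to a$. To prove $\limsup_n m(a_n) \le m(a)$, I pick a near-minimizer $u \in V(a)$ and rescale $u_n := (a_n/a) u \in S(a_n)$; since $\|u_n\|_{\dot{H}^{1/2}} \to \|u\|_{\dot{H}^{1/2}} < \rho_0$, we have $u_n \in V(a_n)$ eventually, and $F_\mu(u_n) \to F_\mu(u)$. For the reverse, I pick $u_n \in V(a_n)$ with $F_\mu(u_n) \le m(a_n) + 1/n$; the upper bound just established gives $F_\mu(u_n) < 0$ eventually, and Lemma~\ref{lem3.1}(i) then forces $\|u_n\|_{\dot{H}^{1/2}} < R_0(a_n) \le (a_n/a_*)\rho_0$. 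Consequently, $w_n := (a/a_n) u_n \in S(a)$ has $\|w_n\|_{\dot{H}^{1/2}} \le (a/a_*)\rho_0 + o(1) \le \rho_0 + o(1)$, so $w_n \in V(a)$ for $n$ large, and $F_\mu(w_n) - F_\mu(u_n) \to 0$ by boundedness of $u_n$ in every relevant $L^r$.

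For (2), set $\beta := \sqrt{a^2 - \alpha^2}$. I take symmetric decreasing minimizing sequences $u_n \in V(\alpha)$ and $v_n \in V(\beta)$ (allowed by Lemma~\ref{lem2.9}), and choose $R_n \to \infty$ fast enough that $w_n := u_n + v_n(\cdot - R_n e_1)$ satisfies $\|w_n\|_r^r = \|u_n\|_r^r + \|v_n\|_r^r + o(1)$ for $r \in \{2, q, 2^*\}$ and an analogous identity for $\|\cdot\|_{\dot{H}^{1/2}}^2$, using decay of radial $H^{1/2}$-functions. Because $F_\mu(u_n), F_\mu(v_n)<0$ eventually, Lemma~\ref{lem3.1}(i) supplies $\|u_n\|_{\dot{H}^{1/2}} < (\alpha/a_*)\rho_0$ and $\|v_n\|_{\dot{H}^{1/2}} < (\beta/a_*)\rho_0$, whence
\[
\|w_n\|_{\dot{H}^{1/2}}^2 < (\alpha^2+\beta^2)\rho_0^2/a_*^2 + o(1) = (a/a_*)^2\rho_0^2 + o(1) \le \rho_0^2 + o(1).
\]
Then $\tilde w_n := (a/\|w_n\|_2) w_n$ lies in $V(a)$ for $n$ large and $F_\mu(\tilde w_n) \to m(\alpha)+m(\beta)$, yielding the non-strict inequality.

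For the strict inequality when (say) $m(\alpha)$ is attained by some $u\in V(\alpha)$, I replace the first summand by the dilation $u^\theta(x) := u(x/\theta^{2/N})$ with $\theta = a/\alpha > 1$, so $\|u^\theta\|_2 = a$. Combining $\|u\|_{\dot{H}^{1/2}} < (\alpha/a_*)\rho_0$ with the weighted AM-GM estimate $a^{(N-1)/N}\alpha^{1/N} \le a \le a_*$ places $u^\theta \in V(a)$, and a direct computation yields
\[
F_\mu(u^\theta) = \theta^2 F_\mu(u) - \tfrac12\theta^{2(N-1)/N}\bigl(\theta^{2/N}-1\bigr)\|u\|^2_{\dot{H}^{1/2}} < \theta^2 m(\alpha),
\]
using $N\ge 2$ (so $2(N-1)/N<2$) and $\theta>1$. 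Combining this strict scaling gain with the strict decrease $m(A)/A^2 < m(\alpha)/\alpha^2$ for $A>\alpha$ implied by the same scaling, together with the non-strict subadditivity above applied to suitable decompositions, produces $m(a) < m(\alpha)+m(\beta)$. The principal technical difficulty throughout is precisely this bookkeeping — ensuring every competitor lies in $V(a)$ — and the strict gap $R_0(a) < (a/a_*)\rho_0$ from Lemma~\ref{lem3.1}(i) is the decisive quantitative ingredient, since it lets the Pythagorean identity $\alpha^2+\beta^2 = a^2 \le a_*^2$ at the mass level propagate to a strict bound on the $\dot{H}^{1/2}$-norm of the combined test function.
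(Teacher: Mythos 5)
Your continuity argument (part 1) is the paper's, except that the reverse inequality should read $\|w_n\|_{\dot{H}^{1/2}} = (a/a_n)\|u_n\|_{\dot{H}^{1/2}} < (a/a_*)\rho_0 \le \rho_0$ exactly, with no ``$+o(1)$'': that is the strict statement that Lemma~\ref{lem3.1} gives, and it is what handles the boundary case $a=a_*$.

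For part (2) you take a genuinely different --- and heavier --- route. The paper's entire proof rests on the single homogeneity estimate $m(\theta\alpha)\le\theta^2 m(\alpha)$ for $\theta>1$ (strict when $m(\alpha)$ is attained), obtained from the amplitude scaling $v=\theta u$ and the elementary inequality $\theta^q,\theta^{2^*}>\theta^2$; both assertions of (2) then follow purely algebraically from the identity $m(a)=\frac{\alpha^2}{a^2}m(a)+\frac{\beta^2}{a^2}m(a)$ with $\beta=\sqrt{a^2-\alpha^2}$. You instead prove the non-strict inequality by the classical dislocation device (adding far-separated symmetric decreasing minimizing sequences, decoupling the four norms, renormalizing the mass), and establish the strict inequality by re-deriving a scaling estimate via the anisotropic dilation $u(\cdot/\theta^{2/N})$, whose strictness comes from $2(N-1)/N<2$ rather than from supercriticality of $q,2^*$. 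Both routes are valid, but yours is redundant: the dilation estimate you prove in the strict case already yields the non-strict subadditivity by the paper's one-line algebra, so the splitting construction is not needed.

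Two points to fix. (i) Your final sentence invokes ``non-strict subadditivity applied to suitable decompositions'', but subadditivity alone does not close the argument; what is needed is the \emph{non-strict scaling} inequality $m(a)/a^2\le m(\beta)/\beta^2$, obtained from the same dilation applied to near-minimizers of $m(\beta)$. Then $m(a)=\frac{\alpha^2}{a^2}m(a)+\frac{\beta^2}{a^2}m(a)<m(\alpha)+m(\beta)$, the strictness coming from the $\alpha$-term. (ii) In the splitting argument, the displayed bound $\|w_n\|^2_{\dot{H}^{1/2}}<(a/a_*)^2\rho_0^2+o(1)$ does not yield $\|w_n\|_{\dot{H}^{1/2}}<\rho_0$ for large $n$ when $a=a_*$. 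To close this, replace the $n$-dependent strict inequalities by the uniform bounds $\|u_n\|_{\dot{H}^{1/2}}\le R_0(\alpha)$ and $\|v_n\|_{\dot{H}^{1/2}}\le R_0(\beta)$ from Lemma~\ref{lem3.1}, so that $R_0(\alpha)^2+R_0(\beta)^2<(\alpha^2+\beta^2)\rho_0^2/a_*^2\le\rho_0^2$ is a strict gap between fixed quantities that absorbs the $o(1)$.
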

\begin{proof}
$(1)$\ In order to prove the continuity consider a sequence $a_n\in (0,a_*]$ such that $a_n\to a$ as $n\to\infty$. From Lemma \ref{lem3.3} (1),
 for any $\varepsilon >0$ sufficiently small, there exists $u_n \in V(a_n)$ such that
\begin{align}\label{a2}
F_{\mu}(u_n) \leq m(a_n) + \varepsilon \quad \mbox{and} \quad F_{\mu}(u_n) <0.
\end{align}
Let $\tilde{u}_n := \frac{a}{a_n} u_n$, then $\tilde{u}_n \in S(a)$.

We claim that $\tilde{u}_n \in V(a)$. In fact, if $a_n \geq a$, then
\begin{align*}
\|\tilde{u}_n\|_{\dot{H}^{1/2}} = \frac{a}{a_n} \|u_n\|_{\dot{H}^{1/2}} \leq \|u_n\|_{\dot{H}^{1/2}} < \rho_0.
\end{align*}
If $a_n<a$, by Lemma \ref{lem3.1}, we have $f(a_n, \rho) \geq 0$ for any $\rho\in [\frac{a_n}{a}\rho_0,\rho_0]$.
Therefore, we deduce from Lemma \ref{lem3.1} and \eqref{a2} that $f(a_n, \|u_n\|_{\dot{H}^{1/2}})<0$, then $\|u_n\|_{\dot{H}^{1/2}}<\frac{a_n}{a}\rho_0$ and
\begin{align*}
\|\tilde{u}_n\|_{\dot{H}^{1/2}}=\frac{a}{a_n} \|u_n\|_{\dot{H}^{1/2}} < \frac{a}{a_n} \frac{a_n}{a} \rho_0 = \rho_0.
\end{align*}
This completes the proof.

Since $\tilde{u}_n \in V(a)$, we have
\begin{align*}
m(a) \leq F_{\mu}(\tilde{u}_n) = F_{\mu}(u_n) + [F_{\mu}(\tilde{u}_n) - F_{\mu}(u_n)],
\end{align*}
where
\begin{equation*}
F_{\mu}(\tilde{u}_n) - F_{\mu}(u_n) = \frac{1}{2}(\frac{a^2}{a^2_n}-1)\|u_n\|_{\dot{H}^{1/2}}^2 - \frac{\mu}{q} \big[ (\frac{a}{a_n})^{q}-1\big]  \|u_n\|_{q}^{q} -  \frac{1}{2^*}\big[(\frac{a}{a_n})^{2^*} - 1 \big] \|u_n\|_{2^*}^{2^*}.
\end{equation*}
As $\|u_n\|_{\dot{H}^{1/2}} <\rho_0$, $\{u_n\}$ is uniformly bounded in $H^{1/2}(\R^N)$.
Then, we have
\begin{equation}\label{a3}
m(a) \leq F_{\mu}(\tilde{u}_n) = F_{\mu}(u_n) + o_n(1) \ \ \text{as} \ \  n \to \infty.
\end{equation}
Combining \eqref{a2} and \eqref{a3}, we have
\begin{align*}
m(a) \leq m(a_n) + \varepsilon + o_n(1).
\end{align*}

Similarly, there exists a $u \in V(a)$ such that
\begin{equation*}
F_{\mu}(u) \leq m(a) + \varepsilon \quad \mbox{and} \quad F_{\mu}(u) <0.
\end{equation*}
Let $\bar{u}_n := \frac{a_n}{a}u$ and hence $\bar{u}_n \in S(a_n)$.
Clearly, $\|u\|_{\dot{H}^{1/2}}<\rho_0$ and $a_n \to a$ imply $\|\bar{u}_n\|_{\dot{H}^{1/2}} < \rho_0$ for $n$ large enough,
so that $\bar{u}_n \in V(a_n)$. Also, $F_{\mu}(\bar{u}_n) \to F_\mu(u)$ as $n\to +\infty$. We thus have
\begin{equation*}
m(a_n) \leq F_\mu(\bar{u}_n) = F_\mu(u) + [F_{\mu}(\bar{u}_n) - F_{\mu}(u)] \leq m(a) + \varepsilon + o_n(1).
\end{equation*}
Hence, since $\varepsilon > 0$ is arbitrary, we obtain that $m(a_n) \to m(a)$ as $n\to\infty$.

(2) By Lemma \ref{lem3.3} (1), fixed $\alpha\in(0,a)$, for any $\varepsilon >0$ small enough, there exists a $u \in V(\alpha)$ such that
\begin{align}\label{a5}
F_{\mu}(u) \leq m(\alpha) + \varepsilon \quad \text{and} \quad F_{\mu}(u) <0.
\end{align}
In view of Lemma \ref{lem3.1},  $\displaystyle f(\alpha, \rho) \geq 0$ for any $\rho \in [\frac{\alpha}{a} \rho_0, \rho_0]$. Combining Lemma \ref{lem3.1} with \eqref{a5}, we have
\begin{equation}\label{a6}
||u||_{\dot{H}^{1/2}} < \frac{\alpha}{a} \rho_0.
\end{equation}
Setting $v = \theta u$ and $1<\theta<\frac{a}{\alpha}$, we have $||v||_2 = \theta ||u||_2 = \theta \alpha$ and
$||v||_{\dot{H}^{1/2}} = \theta ||u||_{\dot{H}^{1/2}} < \rho_0$, which implies $v \in V(\theta \alpha)$.
Therefore
\begin{equation}\label{a51}
\begin{aligned}
m(\theta \alpha) \leq F_{\mu}(v) &= \frac{\theta^2}{2} ||u||_{\dot{H}^{1/2}}^2 - \frac{\mu\theta^{q}}{q}  \|u\|_{q}^{q} - \frac{\theta^{2^*}}{2^*}  \|u\|_{2^*}^{2^*}  \\
&< \theta^2 F_{\mu}(u) \leq \theta^2 (m(\alpha) + \varepsilon).
\end{aligned}
\end{equation}
Since $\varepsilon > 0$ is arbitrary, we have that  $m(\theta \alpha) \leq \theta^2 m(\alpha)$.
If $m(\alpha)$ is reached then we can let $\varepsilon = 0$ in \eqref{a51} and thus the strict inequality follows.

Next, we show that $m(a)\le m(\alpha)+m(\sqrt{a^2-\alpha^2})$ for $\alpha \in (0,a)$.
Indeed, then we have
\begin{align*}
m(a)&=\frac{\alpha^2}{a^2}m(a)+\frac{a^2-\alpha^2}{a^2}m(a)\\
&=\frac{\alpha^2}{a^2}m(\frac{a}{\alpha}\alpha)+
\frac{a^2-\alpha^2}{a^2}m(\frac{a}{\sqrt{a^2-\alpha^2}}\sqrt{a^2-\alpha^2})\\
&\le m(\alpha)+m(\sqrt{a^2-\alpha^2}),
\end{align*}
with a strict inequality if $m(\alpha)$ is reached.
\end{proof}

Define
\begin{equation}\label{Ma}
\mathcal{M}_a:=\big\{u\in V(a) \big| F_{\mu}(u)=m(a)\big\}.
\end{equation}
Obviously, $\mathcal{M}_a$ contains all ground state solutions of \eqref{eq1.1}-\eqref{ad1}. Moreover, we have the following Lemma.

\begin{Lem}\label{lem3.5}
Suppose $2<q<2+\frac{2}{N}$ and $a\in (0,a_*]$, then $m(a)$ can be achieved by some function $u_a\in V(a)$.
\end{Lem}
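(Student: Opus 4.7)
\medskip

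\noindent\textbf{Proof plan for Lemma \ref{lem3.5}.}
The strategy is the direct method, adapted to handle the Sobolev-critical term via the particular choice of the radius $\rho_0$. I would start by taking a minimizing sequence $\{u_n\}\subset V(a)$ with $F_\mu(u_n)\to m(a)<0$. By Lemma \ref{lem2.9} the symmetric-decreasing rearrangement $u_n^*$ preserves $\|u_n\|_2$ and all $L^p$ norms while not increasing $\|u_n\|_{\dot{H}^{1/2}}$; thus one may replace $u_n$ by $u_n^*$ and assume $u_n$ is nonnegative, radial and radially nonincreasing. Since $\|u_n\|_2=a$ and $\|u_n\|_{\dot{H}^{1/2}}<\rho_0$, the sequence is bounded in $H^{1/2}(\R^N)$. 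Up to a subsequence, $u_n\rightharpoonup u_a$ weakly in $H^{1/2}$, a.e.\ in $\R^N$, and by the well-known compact embedding of $H^{1/2}_{\mathrm{rad}}(\R^N)$ into $L^r(\R^N)$ for $r\in(2,2^*)$ (with $2^*=\tfrac{2N}{N-1}$, which covers $q$), we have $u_n\to u_a$ in $L^q(\R^N)$.

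Next I would exploit the Br\'ezis--Lieb splitting. Setting $v_n:=u_n-u_a$, the standard identities yield
\begin{equation*}
\|u_n\|_2^2=\|u_a\|_2^2+\|v_n\|_2^2+o(1),\qquad \|u_n\|_{\dot{H}^{1/2}}^2=\|u_a\|_{\dot{H}^{1/2}}^2+\|v_n\|_{\dot{H}^{1/2}}^2+o(1),
\end{equation*}
and $\|u_n\|_{2^*}^{2^*}=\|u_a\|_{2^*}^{2^*}+\|v_n\|_{2^*}^{2^*}+o(1)$, while $\|u_n\|_q^q\to\|u_a\|_q^q$. Consequently
\begin{equation*}
F_\mu(u_n)=F_\mu(u_a)+G(v_n)+o(1),\qquad G(w):=\tfrac{1}{2}\|w\|_{\dot{H}^{1/2}}^2-\tfrac{1}{2^*}\|w\|_{2^*}^{2^*}.
\end{equation*}
The crucial point, and the main technical obstacle, is to show $G(v_n)\geq 0$ eventually, which is where the Sobolev-critical exponent is controlled by the definition of $\rho_0$. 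Since $\|v_n\|_{\dot{H}^{1/2}}\leq\|u_n\|_{\dot{H}^{1/2}}+o(1)<\rho_0$, the Sobolev inequality in Lemma \ref{lem2.1} gives $G(v_n)\geq g(\|v_n\|_{\dot{H}^{1/2}})$ with $g(\rho)=\tfrac{1}{2}\rho^2-\tfrac{1}{2^*\mathcal{S}^{2^*/2}}\rho^{2^*}$; a direct computation (using the definition of $\rho_0$ in Lemma \ref{lem3.1}) shows $g$ is strictly positive on $(0,\rho_0]$, so $G(v_n)\geq 0$ and, more strongly, $G(v_n)\to 0$ forces $\|v_n\|_{\dot{H}^{1/2}}\to 0$.

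Now I would rule out vanishing and dichotomy. If $u_a\equiv 0$ then $F_\mu(u_n)=G(v_n)+o(1)\geq o(1)$, contradicting $m(a)<0$. Let $\alpha:=\|u_a\|_2\in(0,a]$. Since $F_\mu(u_a)\leq m(a)<0$ while $h_\alpha(\rho_0)\geq 0$ for all $\alpha\leq a\leq a_*$ (Lemma \ref{lem3.1}), the estimate $F_\mu(u_a)\geq h_\alpha(\|u_a\|_{\dot{H}^{1/2}})$ forces $\|u_a\|_{\dot{H}^{1/2}}<\rho_0$, i.e.\ $u_a\in V(\alpha)$. If $\alpha<a$, then from the scaling inequality $m(\theta\alpha)\leq\theta^2 m(\alpha)$ established in the proof of Lemma \ref{lem3.4} with $\theta=a/\alpha>1$ and $m(\alpha)<0$, one gets $m(a)\leq(a/\alpha)^2 m(\alpha)<m(\alpha)\leq F_\mu(u_a)\leq m(a)$, a contradiction. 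Therefore $\alpha=a$, so $u_a\in V(a)$ and $v_n\to 0$ in $L^2(\R^N)$.

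Finally, from $m(a)=F_\mu(u_a)+\lim G(v_n)$ together with $F_\mu(u_a)\geq m(a)$ and $G(v_n)\geq 0$, both inequalities must be equalities; hence $F_\mu(u_a)=m(a)$, $\|v_n\|_{\dot{H}^{1/2}}\to 0$ by the strict positivity of $g$ on $(0,\rho_0]$, and Sobolev then forces $\|v_n\|_{2^*}\to 0$. In particular $u_n\to u_a$ strongly in $H^{1/2}(\R^N)$, and the boundary alternative $\|u_a\|_{\dot{H}^{1/2}}=\rho_0$ is excluded by $F_\mu(u_a)=m(a)<0\leq\inf_{\partial V(a)}F_\mu$ from Lemma \ref{lem3.3}. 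Thus $u_a\in V(a)$ achieves $m(a)$, completing the proof. The main obstacle, as noted, is controlling the noncompact critical term $|u|^{2^*-2}u$; this is precisely what the threshold $\rho_0$ is engineered for, ensuring nonnegativity of the residual energy $G(v_n)$ and thereby compensating for the loss of compactness at the Sobolev critical exponent.
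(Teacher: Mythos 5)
Your proof is correct, but it takes a genuinely different and somewhat more streamlined route than the paper's. Where the paper works with a general minimizing sequence, invokes Lions' vanishing lemma with translations $\{y_n\}$ to extract a nontrivial weak limit, and rules out dichotomy by a case analysis on $F_\mu(u_a)\ge m(\|u_a\|_2)$ together with the strict subadditivity $m(a)<m(\alpha)+m(\sqrt{a^2-\alpha^2})$ (when one piece is attained), you instead (i) symmetrize via the fractional P\'olya--Szeg\H{o} inequality (Lemma \ref{lem2.9}) so that the minimizing sequence is radial non-increasing and the compact embedding $H^{1/2}_{\mathrm{rad}}(\R^N)\hookrightarrow L^q(\R^N)$ for $2<q<2^*$, $N\ge2$ yields $u_n\to u_a$ in $L^q$ directly (no translations needed); and (ii) rule out mass loss by the cleaner observation $m(a)\le(a/\alpha)^2 m(\alpha)<m(\alpha)\le F_\mu(u_a)\le m(a)$, using only $m(\alpha)<0$ and the scaling inequality already established inside the proof of Lemma \ref{lem3.4}. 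The residual bookkeeping $F_\mu(u_n)=F_\mu(u_a)+G(v_n)+o(1)$ with $G(w)=\tfrac12\|w\|_{\dot H^{1/2}}^2-\tfrac{1}{2^*}\|w\|_{2^*}^{2^*}$ and the exploitation of the choice of $\rho_0$ to force $G\ge0$ is the same mechanism the paper uses (the coefficient $\beta_0>0$ in its display \eqref{a61}). Your approach buys a simpler vanishing/dichotomy analysis; the paper's translation-based approach has the marginal advantage of not requiring radial compactness, but since the same lemma later produces radial ground states anyway, nothing is lost. Two small points you should make precise if you were to write this out: first, the radial compact embedding for $H^{1/2}_{\mathrm{rad}}$ is standard for $N\ge2$ but is not in the paper's preliminaries (Lemma \ref{lem2.1} only states $L^r_{\mathrm{loc}}$ compactness), so it needs a citation; second, Br\'ezis--Lieb only gives $\|v_n\|_{\dot H^{1/2}}^2\le\rho_0^2+o(1)$, not $\|v_n\|_{\dot H^{1/2}}<\rho_0$ outright, so the nonnegativity of $G(v_n)$ should be justified by observing that the auxiliary function $g(\rho)=\tfrac12\rho^2-\tfrac{1}{2^*\mathcal{S}^{2^*/2}}\rho^{2^*}$ is positive on the strictly larger interval $\bigl(0,(2^*\mathcal{S}^{2^*/2}/2)^{1/(2^*-2)}\bigr)$, which contains a neighborhood of $\rho_0$.
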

\begin{proof}
For any $a\in (0,a_*]$, let $\{u_n\}$ be a minimizing sequence for $m(a)$ such that $u_n\in S(a)$ and $F_{\mu}(u_n)\to m(a)$. Then, there exist a $\nu>0$ and $\{y_n\}$ such that
\begin{equation*}
\int_{B(y_n,R)}|u_n|^2\geq\nu>0,\quad  \text{for some}\ R>0.
\end{equation*}
If not, by Lions Lemma \cite{WM}, we have $\|u_n\|^q_{q}\to 0$ as $n\to \infty$. Using the Sobolev inequality, we deduce that
\begin{equation}\label{a61}
\begin{aligned}
F_\mu(u_n)&=\frac{1}{2}||u||_{\dot{H}^{1/2}}^2 - \frac{1}{2^*} \|u\|_{2^*}^{2^*}+o_n(1)\\
          &\ge\frac{1}{2}||u||_{\dot{H}^{1/2}}^2 - \frac{1}{2^*\mathcal{S}^{\frac{2^*}{2}}}||u||_{\dot{H}^{1/2}}^{2^*}+o_n(1)\\
          &\ge||u||_{\dot{H}^{1/2}}^2\Big[\frac{1}{2}-\frac{1}{2^*\mathcal{S}^{\frac{2^*}{2}}}\rho^{2^*-2}_0 \Big]+o_n(1).
\end{aligned}
\end{equation}
By Lemma \ref{lem3.1}, $f(a_*,\rho_0)=0$, we get
$$\beta_0=\Big[\frac{1}{2}-\frac{1}{2^*\mathcal{S}^{\frac{2^*}{2}}}\rho^{2^*-2}_0\Big]=\frac{\mu C^{q}_{opt}}{q} \rho^{q\gamma_q-2}_0a^{q(1-\gamma_q)}_*>0.$$
This contradicts with $m(a)<0$.

Then there exists $u_a\in H^{1/2}(\R^N)$ such that $u_n(x-y_n)\rightharpoonup u_a\neq 0$ in $H^{1/2}(\R^N)$.  Let $w_n(x):=u_n(x-y_n)-u_a$, by Br\'ezis-Lieb Lemma, we have
\begin{equation*}
\|w_n\|^2_2=\|u_n(x-y_n)\|^2_2-\|u_a\|^2_2=a^2-\|u_a\|^2_2+o_n(1).
\end{equation*}
Moreover,
\begin{equation*}
F_{\mu}(u_n)=F_{\mu}(u_n(x-y_n))=F_{\mu}(w_n)+F_{\mu}(u_a)+o_n(1).
\end{equation*}

Next, we claim that
\begin{align*}
	\|w_n\|_2^2 \to 0,\quad \text{as} \quad n \to \infty.
\end{align*}

In fact, if $a_0:=\|w_n\|_2>0$, we have $\|w_n\|_2\le a$ and $\|w_n\|_{\dot{H}^{1/2}}\le ||u_n||_{\dot{H}^{1/2}}<\rho_0$. Hence, we get that $w_n \in V(a_0)$ and $F_{\mu}(w_n) \geq m(a_0)$.
Recalling that $F_{\mu}(u_n) \to m(a)$, we have
\begin{align*}
	m(a) = F_{\mu}(w_n) + F_{\mu}(u_a) + o_n(1) \geq m(a_0) + F_{\mu}(u_a) + o_n(1).
\end{align*}
From Lemma \ref{lem3.4}, the map $a \mapsto m(a)$ is continuous, we deduce that
\begin{align}\label{a7}
m(a) \geq m(a_0) + F_{\mu}(u_a).
\end{align}
We also have that $u_a \in V(\|u_a\|_2)$ by the weak limit. It follows that $F_{\mu}(u_a) \geq m(\|u_a\|_2)$.
If $F_{\mu}(u_a) > m(\|u_a\|_2)$, then it follows from \eqref{a7} that
	\begin{align*}
	m(a) > m(a_0) + m(\sqrt{a^2-a^2_0}) \geq  m(a),
	\end{align*}
which is a contradiction. Thus we have $F_{\mu}(u_a) = m(\sqrt{a^2-a^2_0})$, namely $u_a$ is a local minimizer on $V(a-a_0)$. Therefore, we deduce from Lemma \ref{lem3.4} and \eqref{a7} that
\begin{align*}
m(a) \geq m(a_0) + F_{\mu}(u_a) = m(a_0) + m(\sqrt{a^2-a^2_0}) >  m(a),
\end{align*}
which is impossible. Thus, $a_0=0$, the claim follows.
	
We will prove that $w_n \to 0$ in $H^{1/2}(\R^N)$. From the above claim, we have $\|u_a\|_2=a$ and $\|u_a\|_{\dot{H}^{1/2}}\le ||u_n||_{\dot{H}^{1/2}}<\rho_0$, for $n$ large enough. Then 
\begin{equation*}
	F_{\mu}(u_n) = F_{\mu}(u_a) + F_{\mu}(w_n) + o_n(1) \to m(a).
\end{equation*}
We have that  $F_{\mu}(u_a) \geq m(a)$, and hence $F_{\mu}(w_n) \leq o_n(1)$. By using the fractional Gagliardo-Nirenberg inequality and $||w_n||_2^2 \to 0$, we also have $||w_n||_q^q \to 0$.
Thus \eqref{a61} implies that
\begin{equation*}
	F_{\mu} (w_n) \geq \beta_0||w_n||_{\dot{H}^{1/2}}^2 +o_n(1).
\end{equation*}
Therefore, we conclude that $||w_n||_{\dot{H}^{1/2}}^2 \to 0$, and $u_a\in\mathcal{M}_a$.
\end{proof}

In the following, we derive a better upper bound of $m(a)$. We consider the problem
\begin{equation}\label{d1}
\sqrt{-\Delta}u =\bar{\lambda} u +\mu|u|^{q-2}u \ \ \text{in}\ \R^N
\end{equation}
under the constraint $S(a)$, where $\mu>0$ and $2<q<2+\frac{2}{N}$. Define $I(u)=\frac{1}{2}\|u\|^2_{\dot{H}^{1/2}}-\frac{\mu}{q}\|u\|^{q}_{q}$, then solutions $u$ of \eqref{d1} can be found as minimizers of
\begin{equation*}
m_0(a)=\inf_{u\in S(a)} I(u)>-\infty,
\end{equation*}
where $\bar{\lambda}$ is Lagrange multipliers. As in the proof of Theorem 1.2 of \cite{ZL} (see also \cite{FL}), we obtain that $m_0(a)$ has a unique positive radial minimizer $\alpha Q(\beta x)$ with
\begin{equation}\label{a22}
\alpha=\frac{a\beta^{\frac{N}{2}}}{\|Q\|_2}, \quad \beta=\Big[ \frac{\mu a^{q-2}}{\|Q\|^{q-2}_2} \Big]^{\frac{2}{2-q\gamma_q}},
\end{equation}
where $Q$ is the unique positive radial ground state solution of
\begin{equation}\label{a23}
 \sqrt{-\Delta}Q + Q = |Q|^{q-2}Q \ \ \text{in}\ \R^N.
\end{equation}

By a direct calculation, we have the following corollary.
\begin{Cor} \label{lem5.1}
Let $a>0$, $\mu>0$ and  $q\in(2,2+\frac{2}{N})$. Then \eqref{d1} has a unique positive solution $(\bar{\lambda}_0, u_0)$
given by
\begin{equation*}
\bar{\lambda}_0=-\|Q\|^{\frac{2(2-q)}{2-q\gamma_q}}_2\mu^{\frac{2}{2-q\gamma_q}}a^{\frac{2(q-2)}{2-q\gamma_q}},\quad
u_0:= \alpha Q(\beta x ),
\end{equation*}
where $\alpha,\beta$ are given in \eqref{a22}.
Furthermore, we have
$$m_0(a)=I(u_0)=-K_{N,q} a^{\frac{2q(1-\gamma_q)}{2-q\gamma_q}},$$
where $K_{N,q}:=\frac{(2-q\gamma_q)\gamma_q}{2q(1-\gamma_q)}\cdot \|Q\|^{\frac{2(2-q)}{2-q\gamma_q}}_2\mu^{\frac{2}{2-q\gamma_q}}>0$.
\end{Cor}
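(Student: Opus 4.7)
My plan is to leverage the characterization, already cited from \cite{ZL,FL}, that the constrained minimum $m_0(a)=\inf_{u\in S(a)}I(u)$ is attained by the unique positive radial profile $u_0(x)=\alpha Q(\beta x)$ with $\alpha,\beta$ as in \eqref{a22}. What then remains is a mostly algebraic three-step verification: (i) identify the Lagrange multiplier $\bar\lambda_0$ by substituting $u_0$ into \eqref{d1}; (ii) upgrade uniqueness of the minimizer to uniqueness among all positive pairs $(\lambda,u)$ solving \eqref{d1} with $\|u\|_2=a$; and (iii) compute $I(u_0)$ in closed form. No genuine analytic obstacle arises; the only care needed lies in tracking the exponents of the scaling parameter $\beta$.

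For (i), I would use the scaling of the half-Laplacian. The identity $\sqrt{-\Delta}[Q(\beta\,\cdot)](x)=\beta(\sqrt{-\Delta}Q)(\beta x)=\beta[Q(\beta x)^{q-1}-Q(\beta x)]$, coming from the equation $\sqrt{-\Delta}Q+Q=Q^{q-1}$, shows that $u_0=\alpha Q(\beta\,\cdot)$ satisfies $\sqrt{-\Delta}u_0=-\beta u_0+\beta\alpha^{2-q}u_0^{q-1}$. Matching this against the ansatz $\sqrt{-\Delta}u_0=\bar\lambda_0 u_0+\mu u_0^{q-1}$ forces $\bar\lambda_0=-\beta$ and $\beta\alpha^{2-q}=\mu$; the latter is a tautology in view of the definitions in \eqref{a22}, while the former, combined with the explicit expression for $\beta$, produces the stated formula for $\bar\lambda_0$.

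For (ii), let $(\lambda,u)$ be any positive solution of \eqref{d1} with $\|u\|_2=a$. Testing the equation against $u$ and combining with the Pohozaev identity $\|u\|_{\dot H^{1/2}}^2=\mu\gamma_q\|u\|_q^q$ from Lemma \ref{lem2.7} (with the critical term absent) yields $\lambda a^2=-\mu(1-\gamma_q)\|u\|_q^q$, so $\lambda<0$. Writing $\lambda=-\beta'$ and searching for $u$ in the form $\alpha'\widetilde Q(\beta' x)$, the profile $\widetilde Q$ must solve $\sqrt{-\Delta}\widetilde Q+\widetilde Q=|\widetilde Q|^{q-2}\widetilde Q$ as soon as $\alpha'^{\,q-2}=\beta'/\mu$. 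The uniqueness of the positive radial ground state $Q$ of this limit equation (cited from \cite{FL}) together with the mass constraint then forces $\widetilde Q=Q$ and $(\alpha',\beta')=(\alpha,\beta)$.

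For (iii), testing $\sqrt{-\Delta}u_0+\beta u_0=\mu u_0^{q-1}$ against $u_0$ and combining with the Pohozaev identity above gives
\begin{equation*}
\|u_0\|_q^q=\frac{\beta a^2}{\mu(1-\gamma_q)},\qquad \|u_0\|_{\dot H^{1/2}}^2=\frac{\gamma_q\,\beta a^2}{1-\gamma_q}.
\end{equation*}
Plugging these into $I(u_0)=\tfrac12\|u_0\|_{\dot H^{1/2}}^2-\tfrac{\mu}{q}\|u_0\|_q^q$ collapses to $I(u_0)=-\dfrac{(2-q\gamma_q)\,\beta a^2}{2q(1-\gamma_q)}$, and the identity $2+\dfrac{2(q-2)}{2-q\gamma_q}=\dfrac{2q(1-\gamma_q)}{2-q\gamma_q}$ converts $\beta a^2$ into the desired monomial in $a$, $\mu$, and $\|Q\|_2$, yielding $m_0(a)=-K_{N,q}\,a^{2q(1-\gamma_q)/(2-q\gamma_q)}$. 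The main piece of bookkeeping is precisely this exponent arithmetic; everything else is a direct consequence of the cited uniqueness results.
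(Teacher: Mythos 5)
Your proposal carries out exactly the ``direct calculation'' the paper alludes to immediately before Corollary~\ref{lem5.1} (after citing Theorem~1.2 of~\cite{ZL}), so the route is the same; you are filling in the computation the paper omits, and steps~(i) and~(iii) are done correctly. Two points deserve attention.

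First, your computation in step~(iii) exposes an apparent typo in the paper's constant $K_{N,q}$. You correctly obtain
\begin{equation*}
I(u_0)=-\frac{(2-q\gamma_q)\,\beta a^2}{2q(1-\gamma_q)},\qquad
\beta a^2=\|Q\|_2^{\frac{2(2-q)}{2-q\gamma_q}}\mu^{\frac{2}{2-q\gamma_q}}a^{\frac{2q(1-\gamma_q)}{2-q\gamma_q}},
\end{equation*}
hence $m_0(a)=-\frac{2-q\gamma_q}{2q(1-\gamma_q)}\|Q\|_2^{\frac{2(2-q)}{2-q\gamma_q}}\mu^{\frac{2}{2-q\gamma_q}}a^{\frac{2q(1-\gamma_q)}{2-q\gamma_q}}$, whereas the paper's $K_{N,q}$ contains an extra factor $\gamma_q$ in the numerator. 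Your value is the right one: it is consistent with $\|u_0\|_{\dot H^{1/2}}^2=\frac{\gamma_q\beta a^2}{1-\gamma_q}$ and with $\bar\lambda_0=-\beta$, whereas the paper's $K_{N,q}$ would force $-\frac{2q(1-\gamma_q)}{2-q\gamma_q}K_{N,q}=-\gamma_q\|Q\|_2^{\frac{2(2-q)}{2-q\gamma_q}}\mu^{\frac{2}{2-q\gamma_q}}$, contradicting the stated $\bar\lambda_0$. You should flag the discrepancy rather than end with ``yielding $m_0(a)=-K_{N,q}a^{\ldots}$'' as if your computation matched the printed constant.

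Second, your uniqueness step~(ii) is slightly loose. After rescaling, you only know that $\widetilde Q$ is a positive $H^{1/2}$ solution of $\sqrt{-\Delta}\widetilde Q+\widetilde Q=|\widetilde Q|^{q-2}\widetilde Q$; you have not established that it is radial or a ground state, so invoking ``uniqueness of the positive radial ground state'' from~\cite{FL} requires an additional radial-symmetry (moving-plane) step. Also, the phrase ``the mass constraint then forces $\widetilde Q=Q$'' has the logic backwards: uniqueness of $Q$ gives $\widetilde Q=Q$, and then the mass constraint together with $\alpha'^{\,q-2}=\beta'/\mu$ determines $(\alpha',\beta')=(\alpha,\beta)$; the mass constraint alone determines $(\alpha',\beta')$ for any given $\|\widetilde Q\|_2$ and cannot by itself pin down $\widetilde Q$. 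This gap is comparable to the paper's own level of detail, since the paper defers all of this to~\cite{ZL}, but it should be acknowledged.
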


\begin{Lem} \label{lem5.2}
Let $a>0$, $\mu>0$ and  $q\in(2,2+\frac{2}{N})$ and $a\in(0, a_*]$. Then
\begin{equation*}
m(a)<-K_{N,q} a^{\frac{2q(1-\gamma_q)}{2-q\gamma_q}},
\end{equation*}
where $K_{N,q}=\frac{(2-q\gamma_q)\gamma_q}{2q(1-\gamma_q)}\cdot \|Q\|^{\frac{2(2-q)}{2-q\gamma_q}}_2\mu^{\frac{2}{2-q\gamma_q}}>0$.
\end{Lem}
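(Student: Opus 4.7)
My plan is to use the minimizer $u_0 = \alpha Q(\beta x)$ of the auxiliary subcritical problem (the one from Corollary~\ref{lem5.1}) as a test function in $V(a)$. Since $u_0 \in S(a)$ by the choice of $\alpha,\beta$ in \eqref{a22}, the main task is to verify that $u_0$ is actually admissible, i.e., that $\|u_0\|_{\dot H^{1/2}} < \rho_0$ for every $a \in (0,a_*]$. Once this is established, the conclusion follows almost immediately: since $u_0$ solves the purely subcritical equation \eqref{d1}, we have $F_\mu(u_0) = I(u_0) - \frac{1}{2^*}\|u_0\|_{2^*}^{2^*} = m_0(a) - \frac{1}{2^*}\|u_0\|_{2^*}^{2^*}$, and because $Q$ is a nontrivial positive function, $\|u_0\|_{2^*}^{2^*} > 0$ strictly. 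Corollary~\ref{lem5.1} identifies $m_0(a) = -K_{N,q}\, a^{\frac{2q(1-\gamma_q)}{2-q\gamma_q}}$, so $m(a) \le F_\mu(u_0) < -K_{N,q}\, a^{\frac{2q(1-\gamma_q)}{2-q\gamma_q}}$.

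To carry out the admissibility check, I would compute $\|u_0\|_{\dot H^{1/2}}^2$ by scaling. A direct Fourier computation gives $\|\alpha Q(\beta\cdot)\|_{\dot H^{1/2}}^2 = \alpha^2 \beta^{1-N}\|Q\|_{\dot H^{1/2}}^2$, and using $\alpha^2 = a^2\beta^N/\|Q\|_2^2$ together with the Pohozaev identity $\|Q\|_{\dot H^{1/2}}^2 = \frac{\gamma_q}{1-\gamma_q}\|Q\|_2^2$ (this is \eqref{a33} with $\sigma=1/2$, $r=q$) yields the clean expression
\begin{equation*}
\|u_0\|_{\dot H^{1/2}}^2 = \frac{\gamma_q}{1-\gamma_q}\, a^2\, \beta = \frac{\gamma_q}{1-\gamma_q}\, a^2\left[\frac{\mu\, a^{q-2}}{\|Q\|_2^{q-2}}\right]^{\frac{2}{2-q\gamma_q}}.
\end{equation*}
Since $q\gamma_q < 2$, the right hand side is increasing in $a$, so it suffices to check the inequality $\|u_0\|_{\dot H^{1/2}} < \rho_0$ at the endpoint $a=a_*$.

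The key step --- and what I expect to be the only nontrivial piece --- is verifying this endpoint inequality. I would substitute the explicit value of $a_*$ from \eqref{constanta_*} and the expression for $C_{opt}^q$ coming from \eqref{a33} into the formula for $\|u_0\|_{\dot H^{1/2}}^2$, and compare with $\rho_0 = \bigl[\frac{(2-q\gamma_q) 2^* \mathcal S^{2^*/2}}{2(2^*-q\gamma_q)}\bigr]^{1/(2^*-2)}$. After simplification the inequality reduces, via the same algebraic comparison used in the proof of Lemma~\ref{lem3.1}(i) (where one sees $a_* < a^*$ thanks to the monotonicity of $x \mapsto \log x/(x-1)$), to a strict numerical inequality of the form $\bigl(\frac{2^*}{2}\bigr)^{(2-q\gamma_q)/(2^*-2)}\cdot\frac{q\gamma_q}{2} < 1$, which holds precisely because $q\gamma_q < 2 < 2^*$. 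This places $u_0$ strictly inside $V(a)$ for every $a\in(0,a_*]$, closing the argument.

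Putting these pieces together, the final chain is $m(a)\le F_\mu(u_0) = m_0(a) - \frac{1}{2^*}\|u_0\|_{2^*}^{2^*} < m_0(a) = -K_{N,q}\, a^{\frac{2q(1-\gamma_q)}{2-q\gamma_q}}$, which is exactly the required upper bound. The strict inequality comes for free from the fact that the critical term in $F_\mu$ contributes a strictly negative amount to the energy of the admissible test function $u_0$.
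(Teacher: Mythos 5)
Your overall strategy is the same as the paper's: take the explicit minimizer $u_0 = \alpha Q(\beta x)$ of the subcritical problem as a test function, establish $u_0 \in V(a)$, and conclude via $m(a) \le F_\mu(u_0) = I(u_0) - \frac{1}{2^*}\|u_0\|_{2^*}^{2^*} < m_0(a)$. The route you propose for the admissibility check, however, is more computational than what the paper does and contains a small misidentification. The paper avoids any endpoint computation by noting that the Pohozaev identity $\|u_0\|_{\dot H^{1/2}}^2 = \mu\gamma_q\|u_0\|_q^q$ together with Gagliardo--Nirenberg gives $\|u_0\|_{\dot H^{1/2}}^{2-q\gamma_q} \le \mu\gamma_q C^q_{opt}a^{q(1-\gamma_q)}$, and comparing directly with $\tilde\rho_a^{2-q\gamma_q} = \frac{2\mu}{q}C^q_{opt}a^{q(1-\gamma_q)}$ yields $\|u_0\|_{\dot H^{1/2}} < \tilde\rho_a$ (because $q\gamma_q < 2$); Lemma~\ref{lem3.1}(i) already provides $\tilde\rho_a < R_0 < \rho_0$, so no substitution of the explicit value of $a_*$ is needed and the bound holds for all $a \in (0,a_*]$ at once.

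Your proposed endpoint check is also correct in principle, and the monotonicity-in-$a$ reduction is fine, but the numerical inequality it boils down to is not the one you cite. If one substitutes the explicit formula for $a_*$ from \eqref{constanta_*} into $\|u_0\|_{\dot H^{1/2}}^{2-q\gamma_q} = \mu\gamma_q C^q_{opt}a^{q(1-\gamma_q)}$, the inequality $\|u_0\|_{\dot H^{1/2}} < \rho_0$ at $a=a_*$ reduces to
$$\frac{q\gamma_q(2^*-2)}{2(2^*-q\gamma_q)} < 1, \quad\text{i.e.,}\quad q\gamma_q < 2,$$
which is trivially true. This is strictly weaker than the inequality $\bigl(\frac{2^*}{2}\bigr)^{(2-q\gamma_q)/(2^*-2)}\frac{q\gamma_q}{2} < 1$ (the one encoding $a_* < a^*$ in Lemma~\ref{lem3.1}) that you claim to obtain; that stronger inequality is also true but does not follow ``precisely because $q\gamma_q < 2 < 2^*$'' as you assert --- the paper proves it via monotonicity of $\log x/(x-1)$. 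So your conclusion is right, but the detail you flagged as ``the only nontrivial piece'' is misstated; it is in fact simpler than you expected, and the paper's $\tilde\rho_a$ route sidesteps it entirely.
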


\begin{proof}
Since
\begin{equation*}
I(u)=\frac{1}{2}\|u\|^2_{\dot{H}^{1/2}}-\frac{\mu}{q}\|u\|^{q}_{q}\ge g_a(\|u\|_{\dot{H}^{1/2}}),
\end{equation*}
where $g_a(\rho)=\frac{1}{2}\rho^2-\frac{\mu}{q}C^{q}_{opt} \rho^{q\gamma_q}a^{q(1-\gamma_q)}$. Note that $g_a(\rho)<0$ if $\rho\in (0, \tilde{\rho}_a)$ and $g_a(\rho)>0$ if $\rho\in (\tilde{\rho}_a,+\infty)$, where
$\tilde{\rho}_a:=\Big[\frac{2\mu}{q}C^{q}_{opt} a^{q(1-\gamma_q)}\Big]^{\frac{1}{2-q\gamma_q}}.$
By Lemma \ref{lem5.1}, we get
\begin{equation*}
\begin{aligned}
\|u_0\|^{2}_{\dot{H}^{1/2}}&=\mu\gamma_q\|u_0\|^q_q\le\mu\gamma_qC^{q}_{opt}a^{q(1-\gamma_q)}\|u_0\|^{q\gamma_q}_{\dot{H}^{1/2}}.
\end{aligned}
\end{equation*}
It follows from Lemma \ref{lem3.1} that
\begin{equation*}
 \|u_0\|_{\dot{H}^{1/2}}<\tilde{\rho}_a<R_0<\rho_0.
\end{equation*}
That is $u_0\in V(a)$, and $\displaystyle m(a)=\inf_{V(a)} F_\mu(u)\le F_\mu(u_0)<I(u_0)=m_0(a)$.
\end{proof}

Next, we prove the asymptotic behaviour of ground states to \eqref{eq1.1}-\eqref{ad1}.
\begin{Lem} \label{lem5.21}
For any fixed $\mu >0$, letting $a\to 0$, then for any ground state $u_{a}\in \mathcal{M}_a$, we have
\begin{equation*}
\frac{m(a)}{a^{\frac{2q(1-\gamma_q)}{2-q\gamma_q}}}\to -K_{N,q},\ \
\frac{\lambda_{a}}{a^{\frac{2(q-2)}{2-q\gamma_q}}}\to
-\frac{2q(1-\gamma_q)}{2-q\gamma_q}K_{N,q},\ \
\frac{\|u_{a}\|^2_{\dot{H}^{1/2}}}{a^{\frac{2q(1-\gamma_q)}{2-q\gamma_q}}}\to \frac{2q\gamma_q}{2-q\gamma_q}K_{N,q}.
\end{equation*}
Moreover, we have
\begin{equation*}
\frac{1}{\alpha} u_{a}\big(\frac{x}{\beta}\big)\to Q \ \  \text{in}\ H^{1/2}(\R^N )\ \ \text{as}\ a\to 0^+,
\end{equation*}
where $Q$ is the unique positive radial ground state of \eqref{a23},
$\alpha=\frac{a\beta^{\frac{N}{2}}}{\|Q\|_2}$, $\beta=\Big[ \frac{\mu a^{q-2}}{\|Q\|^{q-2}_2} \Big]^{\frac{2}{2-q\gamma_q}}$,  $K_{N,q}\!=\frac{(2-q\gamma_q)\gamma_q}{2q(1-\gamma_q)}\cdot \|Q\|^{\frac{2(2-q)}{2-q\gamma_q}}_2\mu^{\frac{2}{2-q\gamma_q}}>\!0$ and $\gamma_q=\frac{N(q-2)}{q}$.

\end{Lem}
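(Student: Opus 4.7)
The plan has three parts: (i) sharpen Lemma~\ref{lem5.2} into a two-sided asymptotic for $m(a)$, (ii) read off the asymptotics for $\|u_a\|_q^q$, $\|u_a\|_{\dot{H}^{1/2}}^2$ and $\lambda_a$ via the Pohozaev identity, and (iii) prove convergence of $w_a:=\alpha^{-1}u_a(\cdot/\beta)$ to $Q$ using mass minimality. For (i), I would complement Lemma~\ref{lem5.2} with the trivial lower bound $m(a)=I(u_a)-\tfrac{1}{2^*}\|u_a\|_{2^*}^{2^*}\geq m_0(a)-\tfrac{1}{2^*}\|u_a\|_{2^*}^{2^*}$, which uses only $u_a\in S(a)$. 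Using the explicit minimizer $v_a=\alpha Q(\beta\cdot)$ of $m_0(a)$ from Corollary~\ref{lem5.1}, which lies in $V(a)$ by the proof of Lemma~\ref{lem5.2}, direct computation gives $\|v_a\|_{2^*}^{2^*}=O\big(a^{2qN(1-\gamma_q)/[(N-1)(2-q\gamma_q)]}\big)$; since $N/(N-1)>1$, this is $o\big(a^{2q(1-\gamma_q)/(2-q\gamma_q)}\big)$. Squeezing $m(a)$ between $m_0(a)-\tfrac{1}{2^*}\|v_a\|_{2^*}^{2^*}$ and $m_0(a)-\tfrac{1}{2^*}\|u_a\|_{2^*}^{2^*}$ yields both the asymptotic for $m(a)$ and the crucial decay $\|u_a\|_{2^*}^{2^*}=o\big(a^{2q(1-\gamma_q)/(2-q\gamma_q)}\big)$.

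For (ii), combining the Pohozaev identity $\|u_a\|_{\dot{H}^{1/2}}^2=\mu\gamma_q\|u_a\|_q^q+\|u_a\|_{2^*}^{2^*}$ (Lemma~\ref{lem2.7}) with $F_\mu(u_a)=m(a)$ produces the algebraic identity
$$m(a)=-\frac{\mu(2-q\gamma_q)}{2q}\|u_a\|_q^q+\frac{1}{2N}\|u_a\|_{2^*}^{2^*},$$
so (i) immediately delivers the asymptotic for $\|u_a\|_q^q$, and Pohozaev then gives that of $\|u_a\|_{\dot{H}^{1/2}}^2$. Testing the equation against $u_a$ and subtracting the Pohozaev identity produces the exact relation $\lambda_a a^2=-\mu(1-\gamma_q)\|u_a\|_q^q$, from which the stated limit of $\lambda_a/a^{2(q-2)/(2-q\gamma_q)}$ follows.

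For (iii), $\|w_a\|_2=\|Q\|_2$ holds by construction, and the fractional dilation formula $\|u(\beta\cdot)\|_{\dot{H}^{1/2}}^2=\beta^{1-N}\|u\|_{\dot{H}^{1/2}}^2$ together with the explicit choice of $\alpha,\beta$ converts the asymptotic of (ii) into $\|w_a\|_{\dot{H}^{1/2}}^2\to\|Q\|_{\dot{H}^{1/2}}^2$. Since $u_a$, and hence $w_a$, can be taken positive and radially symmetric decreasing via Lemma~\ref{lem2.9}, the bounded radial sequence $\{w_a\}$ admits a subsequence $w_a\rightharpoonup w$ in $H^{1/2}(\R^N)$, strongly in $L^r(\R^N)$ for $2<r<2^*$ by radial compactness. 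Rewriting the equation for $u_a$ in the $w_a$ variable yields
$$\sqrt{-\Delta}w_a=\frac{\lambda_a}{\beta}w_a+|w_a|^{q-2}w_a+\frac{\alpha^{2^*-2}}{\beta}|w_a|^{2^*-2}w_a,$$
where the normalization makes the $q$-coefficient equal to $1$, a direct computation gives $\alpha^{2^*-2}/\beta\to 0$, and the asymptotics of (ii) yield $\lambda_a/\beta\to -1$; the limit $w$ therefore solves $\sqrt{-\Delta}w+w=|w|^{q-2}w$, and the strong $L^q$ convergence combined with the $L^q$-asymptotic from (ii) forces $\|w\|_q^q=\|Q\|_q^q>0$, so $w\not\equiv 0$.

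The main obstacle is identifying $w$ with $Q$ in the absence of a uniqueness theorem for positive solutions of the limit equation. This is precisely where the mass minimality derived from the sharp fractional Gagliardo--Nirenberg inequality is decisive: every nontrivial solution $v$ of $\sqrt{-\Delta}v+v=|v|^{q-2}v$ satisfies $\|v\|_2\geq\|Q\|_2$, while weak lower semicontinuity of the $L^2$-norm gives $\|w\|_2\leq\liminf\|w_a\|_2=\|Q\|_2$, forcing $\|w\|_2=\|Q\|_2$ and hence $w$ itself to be a positive radial ground state; by uniqueness of such $Q$ we conclude $w=Q$. The norm convergences $\|w_a\|_2\to\|w\|_2$ (trivial) and $\|w_a\|_{\dot{H}^{1/2}}^2\to\|w\|_{\dot{H}^{1/2}}^2$ (from (ii)) then upgrade the weak convergence to strong convergence in $H^{1/2}(\R^N)$.
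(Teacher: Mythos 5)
Your overall strategy mirrors the paper's closely—you use Lemma~\ref{lem5.2} for the upper bound, Corollary~\ref{lem5.1}, the Pohozaev reductions, and the sharp Gagliardo--Nirenberg mass-minimality to identify the scaled limit with $Q$. Steps (ii) and (iii) are sound: the algebraic identity $m(a)=-\tfrac{\mu(2-q\gamma_q)}{2q}\|u_a\|_q^q+\tfrac{1}{2N}\|u_a\|_{2^*}^{2^*}$ is correct, the scaled equation and coefficient limits are the same as the paper's, and your upgrade to strong $H^{1/2}$ convergence via weak convergence plus convergence of norms is in fact a little cleaner than the paper's route through $L^q$.

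However, step (i) contains a genuine gap. The two-sided bound you assemble is
$$m_0(a)-\frac{1}{2^*}\|u_a\|_{2^*}^{2^*}\ \le\ m(a)\ \le\ m_0(a)-\frac{1}{2^*}\|v_a\|_{2^*}^{2^*},$$
with $v_a=\alpha Q(\beta\,\cdot)$ the explicit minimizer. This chain only yields $\|u_a\|_{2^*}^{2^*}\ge\|v_a\|_{2^*}^{2^*}$, a \emph{lower} bound on the critical norm, and it leaves $m(a)$ bounded below by $m_0(a)-\tfrac{1}{2^*}\|u_a\|_{2^*}^{2^*}$ with an uncontrolled subtracted term. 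So the squeeze cannot, as claimed, ``yield both the asymptotic for $m(a)$ and the crucial decay $\|u_a\|_{2^*}^{2^*}=o(L)$,'' where $L=a^{2q(1-\gamma_q)/(2-q\gamma_q)}$; nothing in it produces an upper bound on $\|u_a\|_{2^*}^{2^*}$. Without that decay, step (ii) cannot extract the asymptotic of $\|u_a\|_q^q$ from the identity.

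The missing ingredient is exactly what the paper supplies at this point: first get an \emph{upper} bound $\|u_a\|_{\dot{H}^{1/2}}^2=O(L)$ from the Pohozaev-reduced expression $F_\mu(u_a)=\tfrac{1}{2N}\|u_a\|_{\dot{H}^{1/2}}^2-\mu\tfrac{2^*-q\gamma_q}{q2^*}\|u_a\|_q^q<-K_{N,q}L<0$, combined with the Gagliardo--Nirenberg estimate $\|u_a\|_q^q\le C_{opt}^q\,a^{q(1-\gamma_q)}\|u_a\|_{\dot{H}^{1/2}}^{q\gamma_q}$, which forces $\|u_a\|_{\dot{H}^{1/2}}^{2-q\gamma_q}\lesssim a^{q(1-\gamma_q)}$; then the Sobolev inequality gives $\|u_a\|_{2^*}^{2^*}\le \mathcal{S}^{-2^*/2}\|u_a\|_{\dot{H}^{1/2}}^{2^*}=O(L^{2^*/2})=o(L)$ since $2^*/2>1$. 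Note that the mere membership $u_a\in V(a)$ (i.e.\ $\|u_a\|_{\dot{H}^{1/2}}<\rho_0$ with $\rho_0$ independent of $a$) is not enough, so this step cannot be skipped. Once this upper bound is in place, your squeeze, your identity in (ii), and your mass-minimality argument in (iii) all go through.
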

\begin{proof}
For $\mu >0$ fixed, let $a_k\to 0^+$ as $k\to +\infty$ and $u_{a_k}\in V(a_k)$ be a minimizer of $m(a_k)$ for each $k\in\mathbb{N}$, where $V(a_k)=\{ u\in S(a_k) : \|u\|_{\dot{H}^{1/2}}< \rho_0 \}$.
By Lemma \ref{lem3.3}, we get that $u_{a_k}$ is a ground state of $F_\mu\big|_{S(a_k)}$.
By Lemma $\ref{lem2.9}$, we can suppose that $\{u_{a_k}\}$ are nonnegative and radially symmetric, i.e., $0\le u_{a_k}\in H^{1/2}_r(\R^N)$. Then the Lagrange multipliers rule implies the existence of some $\lambda_{a_k} \in \R$ such that
\begin{equation}\label{d2}
\int_{\R^N}\sqrt{-\Delta}u_{a_k} \phi=\lambda_{a_k}\int_{\R^N}u_{a_k}\phi+
\mu\int_{\R^N}|u_{a_k}|^{q-2}u_{a_k}\phi+\int_{\R^N}|u_{a_k}|^{2^*-2}
u_{a_k}\phi
\end{equation}
for each $\phi\in H^{1/2}(\R^N)$.

We claim that
\begin{equation}\label{Dd2.2}
-\frac{1-\gamma_q}{\gamma_q}\Big[\frac{\mu 2N(2^*-q\gamma_q)}{q2^*}C^{q}_{opt}\Big]^{\frac{2}{2-q\gamma_q}}
a^{\frac{2(q-2)}{2-q\gamma_q}}_k<\lambda_{a_k}<-2K_{N,q}
a^{\frac{2(q-2)}{2-q\gamma_q}}_k.
\end{equation}
In fact, it follows from \eqref{d2} that
\begin{equation*}
\begin{aligned}
\lambda_{a_k} a^2_k=\|u_{a_k}\|^2_{\dot{H}^{1/2}}-\mu\|u_{a_k}\|^{q}_{q}-\|u_{a_k}\|^{2^*}_{2^*}< -2K_{N,q}a^{\frac{2q(1-\gamma_q)}{2-q\gamma_q}}_k.
\end{aligned}
\end{equation*}
Since $P_\mu(u_{a_k})=0$ and Lemma \ref{lem5.2}, we have
\begin{equation*}
\begin{aligned}
F_\mu(u_{a_k})&=\frac{1}{2N}\|u_{a_k}\|^2_{\dot{H}^{1/2}}-\mu\frac{2^*-q\gamma_q}{q2^*}\|u_{a_k}\|^{q}_{q}\\
&=-\frac{2-q\gamma_q}{2q\gamma_q}\|u_{a_k}\|^2_{\dot{H}^{1/2}}+\frac{2^*-q\gamma_q}{2^*q\gamma_q}\|u_{a_k}\|^{2^*}_{2^*}\\
&< -K_{N,q}a^{\frac{2q(1-\gamma_q)}{2-q\gamma_q}}_k.
\end{aligned}
\end{equation*}
It follows immediately that
\begin{equation}\label{ba1}
\frac{2q\gamma_q}{2-q\gamma_q}K_{N,q}a^{\frac{2q(1-\gamma_q)}{2-q\gamma_q}}_k< \|u_{a_k}\|^2_{\dot{H}^{1/2}}<
\Big[\frac{\mu2N(2^*-q\gamma_q)}{q2^*}C^{q}_{opt}\Big]^{\frac{2}{2-q\gamma_q}}
a^{\frac{2q(1-\gamma_q)}{2-q\gamma_q}}_k.
\end{equation}
Hence, combining with $P_{\mu}(u_{a_k})=0$, we obtain that
\begin{equation*}
\begin{aligned}
\lambda_{a_k} a^2_k &=-\frac{1-\gamma_q}{\gamma_q}\|u_{a_k}\|^2_{\dot{H}^{1/2}}+\frac{1-\gamma_q}{\gamma_q}\|u_{a_k}\|^{2^*}_{2^*}\\
&>-\frac{1-\gamma_q}{\gamma_q}\Big[\frac{\mu 2N(2^*-q\gamma_q)}{q2^*}C^{q}_{opt}\Big]^{\frac{2}{2-q\gamma_q}}
a^{\frac{2q(1-\gamma_q)}{2-q\gamma_q}}_k.
\end{aligned}
\end{equation*}
We end the proof of claim \eqref{Dd2.2}.

Define $L:=a^{\frac{2q(1-\gamma_q)}{2-q\gamma_q}}_k$, we get
\begin{equation*}
\frac{2q\gamma_q}{2-q\gamma_q}K_{N,q}< \frac{\|u_{a_k}\|^2_{\dot{H}^{1/2}}}{L}< \Big[\frac{\mu 2N(2^*-q\gamma_q)}{q2^*}C^{q}_{opt}\Big]^{\frac{2}{2-q\gamma_q}}.
\end{equation*}
Then, we have
\begin{equation}\label{d3}
\begin{aligned}
\frac{\|u_{a_k}\|^{2^*}_{2^*}}{L}&\le \frac{\mathcal{S}^{\frac{2^*}{2}}\|u_{a_k}\|^{2^*}_{\dot{H}^{1/2}}}{L}\le\mathcal{S}^{\frac{2^*}{2}}\Big[\frac{\mu 2N(2^*-q\gamma_q)}{q2^*}C^{q}_{opt}\Big]^{\frac{2^*}{2-q\gamma_q}}L^{\frac{2^*}{2}-1}\to 0
\end{aligned}
\end{equation}
as $k\to \infty$. These facts imply that
\begin{equation}\label{d4}
\begin{aligned}
-K_{N,q}L&> m(a_k)=F_{\mu}(u_{a_k})=\frac{1}{2}\|u_{a_k}\|^2_{\dot{H}^{1/2}}-\frac{\mu}{q}\|u_{a_k}\|^{q}_{q}-\frac{1}{2^*}\|u_{a_k}\|^{2^*}_{2^*}\\
&\ge \inf_{u\in S(a_k)}\Big\{\frac{1}{2}\|u\|^2_{\dot{H}^{1/2}}-\frac{\mu}{q}\|u\|^{q}_{q}\Big\}-\frac{1}{2^*}\|u_{a_k}\|^{2^*}_{2^*}\\
&=-K_{N,q}L-\frac{1}{2^*}\|u_{a_k}\|^{2^*}_{2^*},
\end{aligned}
\end{equation}
where we use Lemma \ref{lem5.1} in the last equality. From \eqref{d3}-\eqref{d4}, we get
\begin{equation*}
\frac{m(a_k)}{L}\to -K_{N,q},\ \quad \frac{1}{2}\frac{\|u_{a_k}\|^2_{\dot{H}^{1/2}}}{L}-\frac{\mu}{q}\frac{\|u_{a_k}\|^{q}_{q}}{L}\to -K_{N,q}.
\end{equation*}
Since $P(u_{a_k})=0$, we get
\begin{equation*}
\frac{\|u_{a_k}\|^2_{\dot{H}^{1/2}}}{L}\to \frac{2q\gamma_q}{2-q\gamma_q}K_{N,q},\ \quad \frac{\|u_{a_k}\|^{q}_{q}}{L}\to \frac{2q}{\mu\big[2-q\gamma_q\big]}K_{N,q}.
\end{equation*}
Furthermore, we have
\begin{equation*}
\begin{aligned}
\frac{\lambda_{a_k}}{a^{\frac{2(q-2)}{2-q\gamma_q}}_k}
=\frac{\lambda_{a_k}a^2_k}{L}&=\frac{1}{L}\Big[\|u_{a_k}\|^2_{\dot{H}^{1/2}}
-\mu\|u_{a_k}\|^{q}_{q}-\|u_{a_k}\|^{2^*}_{2^*}        \Big]
\to-\frac{2q(1-\gamma_q)}{2-q\gamma_q}K_{N,q}<0.
\end{aligned}
\end{equation*}

Next, we give a precise description of $u_{a_k}$ as $k\to +\infty$. Define $v_{a_k}(x)=\frac{1}{\alpha_k} u_{a_k}(\frac{1}{\beta_k}x)$, where
\begin{equation*}
\alpha_k=\frac{a_k\beta^{\frac{N}{2}}_k}{\|Q\|_2},\quad
\beta_k=\Big[ \frac{\mu a^{q-2}_k}{\|Q\|^{q-2}_2}\Big]^{\frac{2}{2-q\gamma_q}}.
\end{equation*}
We then compute that
\begin{equation*}
\|v_{a_k}\|^2_{\dot{H}^{1/2}}=\frac{\beta^{N-1}_k}{\alpha^2_k}\|u_{a_k}\|^2_{\dot{H}^{1/2}}=\mu^{\frac{2}{q\gamma_q-2}}
\|Q\|^{\frac{2q(1-\gamma_q)}{2-q\gamma_q}}_2\frac{\|u_{a_k}\|^2_{\dot{H}^{1/2}}}{L},
\end{equation*}
\begin{equation*}
\|v_{a_k}\|^2_{2}=\frac{\beta^{N}_k}{\alpha^2_k}\|u_{a_k}\|^2_2=\frac{\|Q\|^2_2}{a^2_k}\|u_{a_k}\|^2_2=\|Q\|^2_2,
\end{equation*}
\begin{equation*}
\|v_{a_k}\|^{q}_{q}=\frac{\beta^{N}_k}{\alpha^{q}_k}\|u_{a_k}\|^{q}_{q}=\|Q\|^{\frac{2q(1-\gamma_q)}{2-q\gamma_q}}_2\mu^{\frac{q\gamma_q}{q\gamma_q-2}}\frac{\|u_{a_k}\|^{q}_{q}}{L}.
\end{equation*}
Therefore, $v_{a_k}$ is bounded in $H^{1/2}(\R^N)$. There exists $v\in H^{1/2}$ such that
\begin{equation*}
 v_{a_k}:=v_{a_k}(x)\rightharpoonup v \ \text{in}\ H^{1/2}.
\end{equation*}
We see that $v_{a_k}$ solves
\begin{equation}\label{d51}
\sqrt{-\Delta}v_{a_k}-\frac{\lambda_{a_k}}{\beta_k}v_{a_k}-\mu\frac{\alpha^{q-2}_k}{\beta_k}|v_{a_k}|^{q-2}v_{a_k}-\frac{\alpha^{2^*-2}_k}{\beta_k}|v_{a_k}|^{2^*-2}v_{a_k}=0.
\end{equation}
By a direct calculation, we deduce that
\begin{equation*}
-\frac{\lambda_{a_k}}{\beta_k}\to 1, \quad \mu\frac{\alpha^{q-2}_k}{\beta_k}\to 1, \quad \frac{\alpha^{2^*-2}_k}{\beta_k}=\mu^{\frac{2^*-2}{2-q\gamma_q}}\Big[\frac{a_k}{\|Q\|_2}\Big]^{\frac{(2^*-2)q(1-\gamma_q)}{2-q\gamma_q}}
\to 0.
\end{equation*}
Therefore, $v$ solves
\begin{equation}\label{d5}
\sqrt{-\Delta}v+v-|v|^{q-2}v=0.
\end{equation}

We claim that the $v$ is a ground state of \eqref{d5}.

In fact, from Appendix of \cite{CW}, we get
$\|v\|^2_{\dot{H}^{1/2}}=\gamma_q\|v\|^{q}_{q}=\frac{\gamma_q}{1-\gamma_q}\|v\|^2_2$.
Thus, by Lemma \ref{lem2.2},
\begin{equation*}
\begin{aligned}
J(v)&=\Big[\frac{\|v\|^{\gamma_q}_{\dot{H}^{1/2}}\|v\|^
{1-\gamma_q}_2}{\|v\|_{q}}\Big]^{\frac{2q}{q-2}}=\Big[\frac{\gamma_q}{1-\gamma_q}\Big]^{\frac{q\gamma_q}{q-2}}(1-\gamma_q)^{\frac{2}{q-2}}\|v\|^2_2\\
&\ge\frac{1}{C^{\frac{2q}{q-2}}_{opt}(N,q)}=\Big[\frac{\gamma_q}{1-\gamma_q}\Big]^{\frac{q\gamma_q}{q-2}}(1-\gamma_q)^{\frac{2}{q-2}}\|Q\|^2_2.
\end{aligned}
\end{equation*}
This implies $\|v\|^2_2=\|Q\|^2_2$ because $\displaystyle\|v\|^2_2\le\liminf_{k\to +\infty}\|v_{a_k}\|^2_2=\|Q\|^2_2$. Then, for any nontrivial solution $w$ of \eqref{d5}, we can deduce that
\begin{equation*}
\begin{aligned}
E(w)&=\frac{1}{2}\|w\|^2_{\dot{H}^{1/2}}+
\frac{1}{2}\|w\|^2_2-\frac{1}{q}\|w\|^{q}_{q}
=\frac{q-2}{2q(1-\gamma_q)}\|w\|^2_2\\
&\ge\frac{q-2}{2q(1-\gamma_q)}\|Q\|^2_2
=\frac{q-2}{2q(1-\gamma_q)}\|v\|^2_2=E(v).
\end{aligned}
\end{equation*}
This means that $v$ is a nonnegative radial ground state of \eqref{d5}. We can check that $v>0$. Otherwise, there exists $x_0\in \R^N$ such that $v(x_0)=0$. Then it follows from equation \eqref{d5} that $0=\sqrt{-\Delta} v(x_0)=C_{N} \mathrm{P.V.} \int_{\mathbb{R}^{N}} \frac{-v(y)}{|x-y|^{N+1}}dy$, which implies that $v\equiv0$ in $\R^N$. This is impossible since $\|v\|^2_2=\|Q\|^2_2$. So it must be $v=Q$ (up to a translation). Hence, by Lemma \ref{lem2.2} and $v_{a_k}\to Q$ in $L^2(\R^N)$, we have $v_{a_k}\to Q$ in $L^{q}(\R^N)$, and then $v_{a_k} \to Q$ in $\dot{H}^{1/2}(\R^N)$.
\end{proof}

\noindent \textbf{Proof of Theorem 1.1}
It follows from Lemma \ref{lem3.5} that there is a minimizer of $F_{\mu}$ on $V(a)$.
By Lemma \ref{lem3.3}, we see that the minimizer $u_a$ is indeed a ground state, and any ground state for $F_{\mu}$ on $S(a)$ belongs to $V(a)$. From \eqref{ba1}, we have $\|u_a\|_{\dot{H}^{1/2}}\to 0$ as $\mu \to 0$, then $F_{\mu}(u_a)\to 0$ as $\mu \to 0$ also holds. The item (2) of Theorem \ref{th1.1} follows from Lemma \ref{lem5.21}.
\qed

\section{Proof of Theorem \ref{th1.2}}
In this section, we follow the approach of \cite{JL}. By Theorem 1.1, for any $\mu>0$ and $a\in (0, a_*(\mu)]$, $u_a$ is a minimizer of $F_\mu(u)|_{V(a)}$. Noting that $F_{\mu}$ is even and combining with Lemma \ref{lem2.9}, we can suppose that $u_a$ is a nonnegative and radially symmetric decreasing function. Similar to the proof of Lemma 2.8 of \cite{LY}, we get $u_a\in C^2(\R^N)$. Then by the strong maximum principle(\cite{SL}), we get $u_a>0$.
\begin{Cor}\label{cor4.1}
For any $a \in (0, a_*]$, there exists a $d=d(a)>0$ such that $m(\alpha) \leq m(a) + d\ (a^2- \alpha^2)$ for any $\alpha \in \displaystyle [\frac{a}{2},a]$.
\end{Cor}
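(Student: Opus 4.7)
The plan is to use a simple $L^2$-rescaling of the ground state $u_a$ (whose existence on $V(a)$ is guaranteed by Lemma \ref{lem3.5}). Set $\theta:=\alpha/a\in[\tfrac12,1]$ and define $w:=\theta u_a$, so that $\|w\|_2=\alpha$ and
\begin{equation*}
\|w\|_{\dot H^{1/2}}=\theta\|u_a\|_{\dot H^{1/2}}\le\|u_a\|_{\dot H^{1/2}}<\rho_0,
\end{equation*}
hence $w\in V(\alpha)$. Therefore $m(\alpha)\le F_\mu(w)$, and it suffices to estimate $F_\mu(w)-F_\mu(u_a)=F_\mu(w)-m(a)$ from above by a constant multiple of $a^2-\alpha^2$.

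A direct expansion gives
\begin{equation*}
F_\mu(w)-m(a)=\frac{\theta^2-1}{2}\|u_a\|_{\dot H^{1/2}}^2+\frac{\mu(1-\theta^q)}{q}\|u_a\|_q^q+\frac{1-\theta^{2^*}}{2^*}\|u_a\|_{2^*}^{2^*}.
\end{equation*}
The first term is non-positive and can be dropped. For the remaining two, I would use the elementary inequality $1-\theta^p\le p(1-\theta)\le p(1-\theta^2)$, valid for all $\theta\in[0,1]$ and $p\ge 1$ (since $1+\theta\ge 1$). Applying this with $p=q$ and $p=2^*$ and recalling $1-\theta^2=(a^2-\alpha^2)/a^2$, we obtain
\begin{equation*}
m(\alpha)-m(a)\le\Bigl[\mu\|u_a\|_q^q+\|u_a\|_{2^*}^{2^*}\Bigr]\frac{a^2-\alpha^2}{a^2}.
\end{equation*}

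Setting $d(a):=a^{-2}\bigl[\mu\|u_a\|_q^q+\|u_a\|_{2^*}^{2^*}\bigr]$ yields the claim; positivity of $d(a)$ follows because $u_a\not\equiv 0$ (indeed $m(a)<0$ by Lemma \ref{lem3.3}). There is no real obstacle here: once the scaling $w=\theta u_a$ is chosen, everything reduces to the elementary one-variable inequality $1-\theta^p\le p(1-\theta^2)$ on $[\tfrac12,1]$. The restriction $\alpha\ge a/2$ is only used to keep $\theta$ in a fixed compact subinterval of $(0,1]$, which would also allow replacing the crude bound $p(1-\theta^2)$ by a sharper constant if one wished, but is not needed for this qualitative statement.
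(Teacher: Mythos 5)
Your proof is correct, and it is essentially the same argument as the one the paper defers to (Lemma 2.3 of Jeanjean--Le \cite{JL}): take a ground state $u_a$ of $m(a)$, downscale it to $w=\theta u_a$ with $\theta=\alpha/a\le 1$ so that $w\in V(\alpha)$ automatically (since the kinetic term only decreases), expand $F_\mu(w)$, drop the nonpositive gradient term, and control the two remaining nonlinear terms with the elementary bound $1-\theta^p\le p(1-\theta)\le p(1-\theta^2)$. Your observation that the restriction $\alpha\ge a/2$ is not actually used is also correct; the argument gives the bound for all $\alpha\in(0,a]$ with the explicit constant $d(a)=a^{-2}\bigl[\mu\|u_a\|_q^q+\|u_a\|_{2^*}^{2^*}\bigr]>0$ (positive because $u_a\in S(a)$ is nontrivial).
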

\begin{proof}
We can adopt a similar argument as the proof of Lemma 2.3 in \cite{JL}, thus we omit it.
\end{proof}

As in Lemma \ref{lem3.1}, $t\star u(x)=t^{\frac{N}{2}}u(tx)$, $\Psi'_u(t)=\frac{1}{2t}P_\mu(t\star u)$ and $\Psi_u(t)$ has exactly two critical points $t^+_u$ and $t^-_u$. It is easy to show that $t^+_u$ is a local minimum point for $\Psi_u$ and $t^-_u$ is a global maximum point for $\Psi_u$. Furthermore, $F_\mu({t^+_u}\star u)<0< \inf_{u\in \partial V(a)}F_\mu(u)\le F_\mu({t^-_u}\star u)$.
\begin{Cor}\label{cor4.2}
$\Psi''_u(t^-_u)<0$ and the map $u\in S(a)\mapsto t^-_u\in \R $ is of class $C^1$.
\end{Cor}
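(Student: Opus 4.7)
\textbf{Proof plan for Corollary \ref{cor4.2}.}

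Write $A:=\|u\|_{\dot H^{1/2}}^2$, $B:=\|u\|_q^q$, $C:=\|u\|_{2^*}^{2^*}$. Differentiating \eqref{eq1.13} twice gives
\begin{equation*}
\Psi''_u(t)=\tfrac{\mu\gamma_q(2-q\gamma_q)}{4}\,t^{\frac{q\gamma_q}{2}-2}B-\tfrac{2^*-2}{4}\,t^{\frac{2^*}{2}-2}C.
\end{equation*}
Since $q\gamma_q<2<2^*$ and $B,C>0$, the equation $\Psi''_u(t)=0$ has exactly one positive root
\begin{equation*}
t_0=\left[\frac{\mu\gamma_q(2-q\gamma_q)B}{(2^*-2)C}\right]^{\frac{2}{2^*-q\gamma_q}},
\end{equation*}
with $\Psi''_u>0$ on $(0,t_0)$ and $\Psi''_u<0$ on $(t_0,\infty)$. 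Consequently $\Psi'_u$ is strictly increasing on $(0,t_0)$ and strictly decreasing on $(t_0,\infty)$, so any two distinct positive critical points of $\Psi_u$ must lie on opposite sides of $t_0$. As $t^+_u<t^-_u$ are exactly the two critical points of $\Psi_u$, I conclude $t^+_u<t_0<t^-_u$, and in particular $\Psi''_u(t^-_u)<0$.

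For the $C^1$ assertion, I would apply the implicit function theorem on the Hilbert manifold $S(a)$. Define
\begin{equation*}
G:S(a)\times(0,\infty)\to\R,\qquad G(u,t)=\Psi'_u(t)=\tfrac{1}{2}A-\tfrac{\mu\gamma_q}{2}t^{\frac{q\gamma_q}{2}-1}B-\tfrac{1}{2}t^{\frac{2^*}{2}-1}C.
\end{equation*}
Since the maps $u\mapsto\|u\|_{\dot H^{1/2}}^2,\|u\|_q^q,\|u\|_{2^*}^{2^*}$ are of class $C^1$ on $H^{1/2}(\R^N)$ (standard, using the Sobolev embedding Lemma \ref{lem2.1}) and the $t$-dependence is smooth for $t>0$, $G$ is $C^1$. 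At any $u_0\in S(a)$, $G(u_0,t^-_{u_0})=0$ and $\partial_tG(u_0,t^-_{u_0})=\Psi''_{u_0}(t^-_{u_0})<0$ by the first part. The implicit function theorem then produces a $C^1$ map $u\mapsto\tau(u)$ on a neighbourhood of $u_0$ in $S(a)$ with $G(u,\tau(u))\equiv0$ and $\tau(u_0)=t^-_{u_0}$. Uniqueness of the global maximum point of $\Psi_u$ (granted by the analysis recalled above the corollary) forces $\tau(u)=t^-_u$ on this neighbourhood, establishing $u\mapsto t^-_u\in C^1(S(a))$.

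The only genuinely delicate point is verifying the strict inequality $\Psi''_u(t^-_u)<0$; once it is in place, the regularity of $u\mapsto t^-_u$ is a routine implicit function theorem argument. The sign of $\Psi''_u(t^-_u)$ is established by the quantitative monotonicity argument above, which relies crucially on $2<q<2+\tfrac{2}{N}$ so that $q\gamma_q<2<2^*$ and the two terms in $\Psi''_u(t)$ cross exactly once.
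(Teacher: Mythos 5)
Your proposal is correct and follows essentially the same strategy as the paper: show $\Psi''_u$ has a unique positive zero because $q\gamma_q<2<2^*$ makes it a difference of two power laws crossing once, conclude $t^+_u<t_0<t^-_u$ so $\Psi''_u(t^-_u)<0$, and then invoke the implicit function theorem with this non-degeneracy. The paper reaches the same sign conclusion via Rolle's theorem (a zero of $\Psi''_u$ must lie in $(t^+_u,t^-_u)$) combined with the uniqueness of that zero, whereas you locate $t_0$ explicitly and argue by monotonicity of $\Psi'_u$ on each side; these are cosmetically different phrasings of the same idea, and your explicit formula for $t_0$ and the extra words on the $C^1$ regularity of $G$ and the identification $\tau(u)=t^-_u$ are fine but not a genuinely different route.
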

\begin{proof}
By Lemma \ref{lem3.1}, we show that $\Psi_u''(t_u^-) <0$. $\Psi_u''(t)$ has a zero $t_u^0 \in (t_u^+, t_u^-)$. If not, $\Psi_u''(t)$ does not change the sign, this contradicts with $\Psi_u'(t_u^+)=\Psi_u'(t_u^-)=0$. 
With a direct calculation,
$$ \Psi_u''(t) = -\frac{\mu\gamma_q}{2}\big[\frac{q\gamma_q}{2}-1\big]t^{\frac{q\gamma_q}{2}-2} \|u\|_{q}^{q} - \frac{1}{2(N-1)}t^{\frac{2^*}{2}-2} \|u\|_{2^*}^{2^*}.$$
Since $\frac{q\gamma_q}{2}<1$ and $\frac{2^*}{2}>1$, $\Psi_u''(t)$ has at most one zero and we are done.
Applying the Implicit Function Theorem to the $C^1$ function $g : \R \times S(a) \mapsto \R$ defined by $g(t,u) = \Psi_u'(t)$. Therefore, we have that
$u \mapsto t^-_u$ is of class $C^1$ because $g(t_u^-, u) =0$ and $\partial_t g(t_u^-, u) = \Psi_u''(t_u^-)<0$.
\end{proof}

We shall see that critical point of $\Psi_{u}(t)$ allow to project a function on $\Lambda(a)=\{u\in S(a): P_\mu(u)=0\}$. Thus, monotonicity and convexity properties of $\Psi_{u}(t)$ strongly affects the structure of $\Lambda(a)$. According to the above Corollary \ref{cor4.2}, we consider the decomposition of $\Lambda(a)$ into the disjoint union $\Lambda(a)=\Lambda^+(a)\cup\Lambda^-(a)$, where
\begin{equation*}
\Lambda^+(a):= \{u\in\Lambda(a): F_\mu(u)<0\}=\{u\in S(a):\Psi'_u(1)=0, F_\mu(u)<0 \},
\end{equation*}
\begin{equation*}
\Lambda^-(a):= \{u\in\Lambda(a): F_\mu(u)>0\}=\{u\in S(a):\Psi'_u(1)=0, F_\mu(u)>0 \}.
\end{equation*}

\begin{Lem}\label{lem4.2}
For any $a \in (0, a_*]$, we denote by
\begin{align*}
W(a) := \{u \in S(a): t^-_u > 1\}.
\end{align*}
The following properties hold.
\begin{enumerate}
		\item  $\Lambda^+(a) \subset W(a)$.
		\item  $\partial W(a) =\Lambda^-(a)$ and $\displaystyle \inf_{u \in \partial W(a)} F_{\mu}(u) >0.$
		\item  $V(a) \cap \{u : F_{\mu}(u) <0 \} \subset W(a).$
		\item  $\displaystyle \inf_{u \in W(a)} F_{\mu}(u)$ is reached and $\displaystyle \inf_{u \in W(a)} F_{\mu}(u)
                 =\inf_{u \in \Lambda^+(a)} F_{\mu}(u)= m(a)$.
\end{enumerate}
\end{Lem}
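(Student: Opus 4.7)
My plan is to exploit the precise geometry of the fiber map $\Psi_u(t)=F_\mu(t\star u)$: for every $u\in S(a)$ the behaviour $\Psi_u(0^+)=0^-$, $\Psi_u(+\infty)=-\infty$ combined with Lemma \ref{lem3.1} and Corollary \ref{cor4.2} forces $\Psi_u$ to have exactly two critical points $t_u^+<t_u^-$, a local minimum at negative level and a strict global maximum at positive level. I will also repeatedly use $\|t\star u\|_{\dot{H}^{1/2}}^2=t\,\|u\|_{\dot{H}^{1/2}}^2$, so that $t\mapsto t\star u$ crosses each sphere $\{\|\cdot\|_{\dot{H}^{1/2}}=r\}$ exactly once, together with the elementary bound $F_\mu(u)\ge h_a(\|u\|_{\dot{H}^{1/2}})$ on $S(a)$ coming from the Gagliardo--Nirenberg and Sobolev inequalities.

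For (1), if $u\in\Lambda^+(a)$ then $\Psi_u'(1)=0$ and $\Psi_u(1)=F_\mu(u)<0$; since $\Psi_u(t_u^-)>0$, necessarily $1=t_u^+<t_u^-$, i.e., $u\in W(a)$. For (2), the $C^1$-regularity of $u\mapsto t_u^-$ from Corollary \ref{cor4.2} and the dilation perturbation $u\mapsto(1\pm\delta)\star u$ (which sends $t_u^-$ to $t_u^-/(1\pm\delta)$) show $\partial W(a)=\{u\in S(a):t_u^-=1\}$, and this set is readily seen to coincide with $\Lambda^-(a)$. The positivity $\inf_{\partial W(a)}F_\mu>0$ follows from
\begin{equation*}
F_\mu(u)=\Psi_u(1)=\max_{t>0}\Psi_u(t)\ge\max_{t>0}h_a(\|t\star u\|_{\dot{H}^{1/2}})=\max_{\rho>0}h_a(\rho)=h_a(\rho_0),
\end{equation*}
which is strictly positive for $a<a_*$ by Lemma \ref{lem3.1}; at the endpoint $a=a_*$ one upgrades $F_\mu(u)\ge 0$ to strict positivity using that the Gagliardo--Nirenberg and Sobolev extremals cannot be saturated simultaneously.

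For (3), pick $u\in V(a)$ with $F_\mu(u)<0$ and set $t_*:=\rho_0^2/\|u\|_{\dot{H}^{1/2}}^2>1$; then $\|t_*\star u\|_{\dot{H}^{1/2}}=\rho_0$ yields $\Psi_u(t_*)\ge f(a,\rho_0)\ge 0>\Psi_u(1)$. If $t_u^-\le 1$ then $\Psi_u$ would be non-increasing on $[1,\infty)$, forcing $\Psi_u(t_*)\le\Psi_u(1)$ and contradicting the previous display. Hence $t_u^->1$, i.e., $u\in W(a)$.

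The main step is (4). For $u\in W(a)$, since $1\in(0,t_u^-)$ and $\Psi_u$ attains its minimum on that interval at $t_u^+$, one has $F_\mu(u)\ge F_\mu(t_u^+\star u)$; combined with $P_\mu(t_u^+\star u)=2t_u^+\Psi_u'(t_u^+)=0$ and $F_\mu(t_u^+\star u)<0$, this gives $t_u^+\star u\in\Lambda^+(a)$. The delicate point is to verify $t_u^+\star u\in V(a)$: letting $t_{R_0}>0$ be the unique parameter with $\|t_{R_0}\star u\|_{\dot{H}^{1/2}}=R_0$, we have $\Psi_u(t_{R_0})\ge h_a(R_0)=0$, whereas $\Psi_u<0$ on $(0,t_u^+]$ (it decreases from $0^-$ there), so $t_{R_0}>t_u^+$, and monotonicity of $t\mapsto\|t\star u\|_{\dot{H}^{1/2}}$ forces $\|t_u^+\star u\|_{\dot{H}^{1/2}}<R_0<\rho_0$. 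Combining these observations with part (3) produces
\begin{equation*}
m(a)\,\ge\,\inf_{W(a)}F_\mu\,\ge\,\inf_{\Lambda^+(a)\cap V(a)}F_\mu\,\ge\,\inf_{V(a)}F_\mu\,=\,m(a),
\end{equation*}
hence $\inf_{W(a)}F_\mu=m(a)$. Since $\Lambda^+(a)\subset W(a)$ by (1) and the minimizer $u_a\in V(a)$ from Lemma \ref{lem3.5} is a ground state with $F_\mu(u_a)=m(a)<0$, hence belongs to $\Lambda^+(a)$, the common value also equals $\inf_{\Lambda^+(a)}F_\mu$ and is attained. The principal technical obstacle is anchoring $t_u^+\star u$ inside $V(a)$, which rests on the monotone ordering $t_u^+<t_{R_0}$ read off from the sign of $\Psi_u$.
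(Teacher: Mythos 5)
The proof follows the same fiber-map/Pohozaev geometry that the paper itself invokes by citing Lemma 2.5 of \cite{JL}, and parts (1), (3), (4) are correct as written: the projection $u\mapsto t_u^+\star u$ lands in $\Lambda^+(a)\cap V(a)$ (the ordering $t_u^+<t_{R_0}$ read off from the sign of $\Psi_u$ is exactly the right way to anchor it inside $V(a)$), and the chain $m(a)\ge\inf_{W(a)}F_\mu\ge\inf_{\Lambda^+(a)\cap V(a)}F_\mu\ge m(a)$ closes the argument.

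The one place your argument is materially incomplete is the strict positivity in (2) at the endpoint $a=a_*$. For $a<a_*$ your inequality
\begin{equation*}
F_\mu(u)=\max_{t>0}\Psi_u(t)\ge\max_{\rho>0}h_a(\rho)=h_a(\rho_0)>0
\end{equation*}
gives a uniform lower bound, which is exactly what $\inf_{\partial W(a)}F_\mu>0$ requires. But at $a=a_*$ we have $h_{a_*}(\rho_0)=0$, so this chain only yields $F_\mu\ge 0$ on $\Lambda^-(a_*)$. Your remark that the Gagliardo--Nirenberg and Sobolev extremals cannot both be attained upgrades this to the \emph{pointwise} statement $F_\mu(u)>0$ for each $u\in\Lambda^-(a_*)$, but that does not, by itself, show the infimum over $\Lambda^-(a_*)$ is bounded away from zero. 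The same issue is lurking in the preliminary claim that for every $u\in S(a_*)$ the fiber $\Psi_u$ attains a strictly positive value (and hence has exactly two critical points with $\Psi_u(t_u^-)>0$): that too relies on strict inequality in $\Psi_u(t)\ge h_{a_*}(\sqrt{t}\,\|u\|_{\dot H^{1/2}})$. Passing from pointwise strictness to a uniform positive lower bound needs either a quantitative stability estimate for one of the two inequalities or a compactness argument for near-extremizers; as it stands, the $a=a_*$ case of (2) (and, downstream, the positivity of the mountain-pass level used in Proposition \ref{prop4}) is asserted but not proved. For $a\in(0,a_*)$ the proposal is complete and correct.
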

\begin{proof}
We can adopt a similar argument as the proof of Lemma 2.5 in \cite{JL}, thus we omit it.
\end{proof}

We denote $H^{1/2}_r(\R^N)$ by
$H^{1/2}_r(\R^N):=\big\{u\in H^{1/2}(\R^N): u(x)=u(|x|), x\in \R^N\big\},$
and by $S_r(a) := S(a) \cap H^{1/2}_r(\R^N)$.
Let
\begin{equation*}
M^0(a):=\inf_{h\in \Gamma^0(a)}\max\limits_{t\in[0,\infty)}F_\mu(h(t)),
\end{equation*}
where
\begin{equation*}
\Gamma^0(a):=\big\{h\in C([0,\infty),S_r(a)): h(0)\in \Lambda^+(a), \exists t_h \ \text{such that} \ h(t)\in E_a, \forall t\ge t_h \big\}
\end{equation*}
with
\begin{equation*}
E_a:=\big\{u\in S(a): F_\mu(u)<2m(a)\big\}\neq\emptyset.
\end{equation*}


It is not difficult to recognize that $F_{\mu}$ has a mountain pass geometry at level $M^0(a)$, and thus we can search a mountain pass critical point. This relies heavily on a refined version of the min-max principle introduced by N. Ghoussoub \cite{GN}, the forth coming Lemma \ref{lem4.3}, and was already applied in \cite{NsAe,NSaE}.

\begin{Def}\label{def3.9}
Let $B$ be a closed subset of $X$. We shall say that a class $\mathcal{F}$ of compact subsets of $X$ is a homotopy-stable family with extended boundary $B$ if for any set $A$ in $\mathcal{F}$ and any $\eta\in C([0,1]\times X;X)$ satisfying $\eta(t,x)=x$ for all $(t,x)\in (\{0\}\times X)\cup ([0,1]\times B)$ we have that $\eta(\{1\}\times A)\in \mathcal{F}$.
\end{Def}

\begin{Lem}(\cite{GN}, Theorem 5.2)\label{lem4.3}
Let $\varphi$ be a $C^{1}$-functional on a complete connected $C^{1}$-Finsler manifold $X$ and consider a homotopy-stable family $\mathcal{F}$ with an extended closed boundary $B$. Set $\displaystyle c=c(\varphi,\mathcal{F})=\inf_{A \in \mathcal{F}}\max\limits_{x\in A}\varphi(x)$ and let $F$ be a closed subset of $X$ satisfying \\
\indent $(1)$~~~~~~~~$(A \cap F)\backslash B \neq \emptyset \ \ \text { for every } A \in \mathcal{F}$, \\
\indent $(2)$~~~~~~~~$\sup \varphi(B) \leq c \leq \inf \varphi(F)$.  \\
Then, for any sequence of sets $(A_{n})_{n}$ in $\mathcal{F}$ such that $\lim_{n}\sup_{A_{n}}\varphi=c$, there exists a sequence $(x_{n})_{n}$ in $X\setminus B $ such that
$$\lim_{n \rightarrow +\infty}\varphi(x_{n})=c,\ \ \lim_{n \rightarrow +\infty}\|d\varphi(x_{n})\|=0,\ \ \lim_{n \rightarrow +\infty}dist(x_{n},F)=0,\ \ \lim_{n \rightarrow +\infty}dist(x_{n},A_{n})=0.$$
\end{Lem}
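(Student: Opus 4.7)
The plan is to argue by contradiction via a quantitative deformation argument on the Finsler manifold $X$, adapted so that the sequence produced is simultaneously close to the level $c$, to the set $F$, and to $A_n$. Suppose no such $(x_n)$ exists. Then there are constants $\varepsilon,\delta>0$ (with $\varepsilon$ free to shrink) such that for all sufficiently large $n$, every $x\in X\setminus B$ satisfying $|\varphi(x)-c|\leq 3\varepsilon$, $\mathrm{dist}(x,F)\leq 3\varepsilon$, and $\mathrm{dist}(x,A_n)\leq 3\varepsilon$ enjoys the gradient bound $\|d\varphi(x)\|\geq\delta$. The aim is then to deform $A_n$ inside $\mathcal{F}$, lower $\sup\varphi$ strictly below $c$, and contradict the requirement (forced by hypotheses $(1)$ and $(2)$) that $\eta_n(1,A_n)$ must meet $F\setminus B$ at a point where $\varphi\geq c$.

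The steps I would carry out, in order, are: (1) pick $A_n\in\mathcal{F}$ with $\sup_{A_n}\varphi\leq c+\varepsilon^2$, which is possible since $\sup_{A_n}\varphi\to c$; (2) using Palais' theorem on $C^{1}$-Finsler manifolds, construct a locally Lipschitz pseudo-gradient $V$ on the open set $\{\|d\varphi\|\geq\delta\}$ with $\|V\|\leq 1$ and $\langle d\varphi,V\rangle\geq\delta/2$, and multiply by a locally Lipschitz cutoff $\chi$ equal to $1$ on the inner tube $\{|\varphi-c|\leq\varepsilon,\ \mathrm{dist}(\cdot,F)\leq\varepsilon,\ \mathrm{dist}(\cdot,A_n)\leq\varepsilon\}$ and vanishing on $B$ and outside the outer $2\varepsilon$-tube; (3) integrate $-\chi V$ over time $T=4\varepsilon^2/\delta$ and reparametrize to get $\eta_n\in C([0,1]\times X;X)$, which fixes $B$ pointwise, moves points at most distance $T$, and is non-increasing on $\varphi$ along orbits; (4) apply the homotopy-stability of $\mathcal{F}$ to obtain $\eta_n(1,A_n)\in\mathcal{F}$, and use hypothesis $(1)$ to choose $x_*\in(\eta_n(1,A_n)\cap F)\setminus B$, at which $\varphi(x_*)\geq c$ by hypothesis $(2)$; (5) write $x_*=\eta_n(1,x_0)$ with $x_0\in A_n$, observe that the entire orbit from $x_0$ to $x_*$ has Finsler length at most $T<\varepsilon$ (for $\varepsilon$ small), so it stays within $\varepsilon$ of both $A_n$ and $F$ and with $\varphi\in[c,c+\varepsilon^2]$, hence it lies in the inner tube where $\chi\equiv 1$, and
\[
\varphi(x_*)\leq\varphi(x_0)-\tfrac{\delta}{2}T\leq c+\varepsilon^2-2\varepsilon^2=c-\varepsilon^2<c,
\]
contradicting $\varphi(x_*)\geq c$.

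The main obstacle is step (5): maintaining all three localizations simultaneously along the entire flow orbit, not just at its endpoints. The distance of $\eta_n(t,x_0)$ to $F$ is controlled only a posteriori, from the fact that the endpoint $x_*\in F$ is within $T$ of every point of the orbit; similarly $\mathrm{dist}(\eta_n(t,x_0),A_n)\leq T$ follows from the displacement bound. Both forces tie the tuning down to $T\ll\varepsilon$, which is what allows the cutoff to be $1$ throughout the orbit and the full decrease $(\delta/2)T$ to be realised. This triple-localization balancing act is precisely what distinguishes Ghoussoub's extension from the classical Palais--Smale deformation lemma, and it is what makes the result usable in the subsequent analysis of the mountain-pass level $M^{0}(a)$. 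The remaining technical points---the existence of Lipschitz pseudo-gradients on a $C^{1}$-Finsler manifold, together with global existence of the flow on $[0,T]$---follow from Palais' framework and the completeness of $X$.
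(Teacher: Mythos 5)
This lemma is invoked in the paper purely as a citation to Ghoussoub's monograph (\cite{GN}, Theorem~5.2); the paper does not supply its own proof, so there is nothing to compare against directly. Judged on its own terms, your deformation argument has the right skeleton --- contradiction hypothesis, Palais pseudo-gradient, cutoff, flow for a carefully tuned time $T=4\varepsilon^2/\delta\ll\varepsilon$, homotopy-stability to stay inside $\mathcal{F}$, hypothesis $(1)$ to produce $x_*\in F\setminus B$, and hypothesis $(2)$ to force $\varphi(x_*)\geq c$. The parameter balancing ($\delta$ fixed by the contradiction hypothesis, $\varepsilon$ shrinking independently so that $T<\varepsilon$) is also internally coherent. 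However, there is a genuine gap at the very point you flag as ``the main obstacle.''

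The cutoff $\chi$ is required to be identically $1$ on the inner tube $\{|\varphi-c|\leq\varepsilon,\ \mathrm{dist}(\cdot,F)\leq\varepsilon,\ \mathrm{dist}(\cdot,A_n)\leq\varepsilon\}$ and simultaneously to vanish on $B$ (the latter is forced by homotopy-stability: $\eta$ must fix $B$ pointwise). But hypothesis $(2)$ only gives $\sup\varphi(B)\leq c\leq\inf\varphi(F)$; it provides no metric separation between $B$ and the inner tube. In general $B$ meets the level set $\{\varphi=c\}$ arbitrarily close to $F$ and to $A_n$, so the inner tube can intersect $B$, in which case no continuous $\chi$ can be $1$ on the tube and $0$ on $B$. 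If instead you let $\chi$ drop off near $B$, the orbit $\gamma(t)=\eta_n(t,x_0)$ --- which you correctly constrain to $\{\varphi\geq c\}$ and to distance $<T$ from both $F$ and $A_n$, but which may approach $B$ arbitrarily closely on $\{\varphi=c\}$ --- may spend a positive fraction of its time where $\chi<1$, and then the decrease estimate $\varphi(x_*)\leq\varphi(x_0)-\tfrac{\delta}{2}T$ fails; you only get $\varphi(x_*)-\varphi(x_0)\leq-\tfrac{\delta}{2}\int_0^T\chi(\gamma(s))\,ds$, and the integral is no longer forced to equal $T$. This is not a cosmetic issue; the delicate interaction between the boundary $B$ and the deformation is precisely what makes Ghoussoub's theorem sharper than the classical quantitative deformation lemma, and his proof handles it by a more refined device (in the monograph, the argument runs through Ekeland's variational principle applied to the min-max class, which sidesteps having to keep a single flow orbit inside a cutoff tube that also avoids $B$). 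To repair your argument you would need either to show the orbit stays a definite distance from $B$ --- which the hypotheses do not give --- or to replace the deformation scheme by one in Ghoussoub's spirit.
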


We introduce the functional $\tilde{F}_\mu: \R^+\times H^{1/2}(\R^N)\to \R$
\begin{equation*}
\tilde{F}_\mu(t,u):=F_\mu(t\star u)=\Psi_u(t)=\frac{t}{2}\|u\|^2_{\dot{H}^{1/2}}-\mu\frac{ t^{\frac{q\gamma_q}{2}}}{q}\|u\|^{q}_{q}-\frac{t^{\frac{2^*}{2}}}{2^*}\|u\|^{2^*}_{2^*}.
\end{equation*}
For fixed $a > 0$, we show that $\tilde{F}_\mu$ has a mountain pass geometry on $\R^+ \times S_r(a)$ at level $\tilde{M}(a)$, i.e.
\begin{align*}
	\tilde{M}(a) := \inf_{\tilde{h} \in \tilde{\Gamma}(a)} \max\limits_{t \in [0,\infty)} \tilde{F}_\mu(\tilde{h}(t)) > \max\limits \{\tilde{F}_\mu(\tilde{h}(0)), \tilde{F}_\mu(\tilde{h}(t_{\tilde{h}}))\}
\end{align*}
where
\begin{align*}
	\tilde{\Gamma}(a) :&=\big \{\tilde{h} \in \mathit{C}([0,\infty), \R^+ \times S_r(a)) : \tilde{h}(0) \in (1, \Lambda^+(a)), \exists t_{\tilde{h}} > 0 \\
 &\qquad\qquad \text{ such that} \ \tilde{h}(t) \in (1, E_a) \, \forall t \geq t_{\tilde{h}}\big\}.
\end{align*}

\begin{Lem}\label{lem4.4}
For any $a\in (0,a_*]$, $\tilde{F}_\mu$ has a mountain pass geometry at the level $\tilde{M}(a)$. Moreover, $M^0(a)=\tilde{M}(a)$.
\end{Lem}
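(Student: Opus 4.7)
The plan is to first establish the mountain pass geometry for $\tilde F_\mu$ via a projection-to-barrier argument using Lemma \ref{lem4.2}, and then show the equality $M^0(a) = \tilde M(a)$ by lifting paths from $\Gamma^0(a)$ to $\tilde\Gamma(a)$ and projecting in the reverse direction.

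For the mountain pass geometry, let $\tilde h \in \tilde\Gamma(a)$, write $\tilde h(t) = (s(t), u(t))$, and consider the projected path $h(t) := s(t) \star u(t) \in S_r(a)$, which is continuous because the $\star$-action is continuous. Since $\tilde h(0) = (1, u(0))$ with $u(0) \in \Lambda^+(a)$, one has $h(0) = u(0) \in \Lambda^+(a) \subset W(a)$ by Lemma \ref{lem4.2}(1), and $\tilde F_\mu(\tilde h(0)) = F_\mu(u(0)) < 0$. For $t \ge t_{\tilde h}$, the condition $\tilde h(t) \in \{1\} \times E_a$ forces $s(t)=1$ and $u(t)\in E_a$, so $h(t) = u(t) \in E_a$ and $\tilde F_\mu(\tilde h(t)) < 2m(a) < 0$. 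The decisive observation is that Lemma \ref{lem4.2}(4) gives $\inf_{W(a)} F_\mu = m(a) > 2m(a)$ (recall $m(a) < 0$), hence $E_a \cap W(a) = \emptyset$. Because $W(a)$ is open (as $u \mapsto t_u^-$ is $C^1$ by Corollary \ref{cor4.2} and $W(a) = \{u\in S(a): t_u^- > 1\}$), the continuity of $h$ yields a crossing time $t^\ast \in (0, t_{\tilde h})$ with $h(t^\ast) \in \partial W(a) = \Lambda^-(a)$. By Lemma \ref{lem4.2}(2),
\begin{equation*}
\tilde F_\mu(\tilde h(t^\ast)) \;=\; F_\mu(h(t^\ast)) \;\ge\; \inf_{\partial W(a)} F_\mu \;>\; 0.
\end{equation*}
Taking the infimum over $\tilde h$ gives $\tilde M(a) \ge \inf_{\partial W(a)} F_\mu > 0 > \max\{\tilde F_\mu(\tilde h(0)), \tilde F_\mu(\tilde h(t_{\tilde h}))\}$, establishing the mountain pass geometry.

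For the equality $M^0(a) = \tilde M(a)$, the inequality $\tilde M(a) \le M^0(a)$ follows by the trivial lift $h \mapsto \tilde h(t) := (1, h(t))$, which sends $\Gamma^0(a)$ into $\tilde\Gamma(a)$ and preserves the functional since $\tilde F_\mu(1, h(t)) = F_\mu(1 \star h(t)) = F_\mu(h(t))$. For the reverse, given $\tilde h = (s, u) \in \tilde\Gamma(a)$, the projection $h(t) := s(t) \star u(t)$ lies in $C([0,\infty), S_r(a))$ with $h(0) = u(0) \in \Lambda^+(a)$ and $h(t) = u(t) \in E_a$ for $t \ge t_{\tilde h}$, so $h \in \Gamma^0(a)$, and again $\tilde F_\mu(\tilde h(t)) = F_\mu(h(t))$ preserves the supremum, yielding $M^0(a) \le \tilde M(a)$.

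I expect the only subtle point to be the projection-barrier argument, namely verifying that the projected path must leave $W(a)$ so that a crossing of $\Lambda^-(a)$ is forced; this rests critically on the strict energy gap $\inf_{W(a)} F_\mu = m(a) > 2m(a)$ furnished by Lemma \ref{lem4.2}(4). The remainder is a routine manipulation of the $\star$-action, its continuity, and the relative positions of $\Lambda^\pm(a)$, $W(a)$, and $E_a$ in the $F_\mu$-landscape.
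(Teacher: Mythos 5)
Your proof is correct, and the argument is essentially the standard one that the paper defers to when it points to Lemma 3.3 of Jeanjean--Le. The two halves fit together cleanly: the projection $\tilde h=(s,u)\mapsto h=s\star u$ carries $\tilde\Gamma(a)$ into $\Gamma^0(a)$ preserving the functional (giving $M^0(a)\le\tilde M(a)$), while the trivial lift $h\mapsto(1,h)$ gives the reverse inequality; and the barrier-crossing argument correctly uses $\Lambda^+(a)\subset W(a)$, the openness of $W(a)$ (from Corollary \ref{cor4.2}), the disjointness $E_a\cap W(a)=\emptyset$ coming from $\inf_{W(a)}F_\mu=m(a)>2m(a)$, and $\partial W(a)=\Lambda^-(a)$ with $\inf_{\partial W(a)}F_\mu>0$ to force the path above a positive level while both endpoint values are negative. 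The one small point worth making explicit is that the crossing time $t^*$ is strictly interior because $E_a$ is in fact disjoint from the closure $\overline{W(a)}$ (since $F_\mu<2m(a)<0$ on $E_a$ while $F_\mu\ge 0$ on $\overline{W(a)}$), but this does not affect the conclusion.
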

\begin{proof}
The proof is similar to that of Lemma 3.3 in \cite{JL} and hence omit the details.

\end{proof}

\begin{Prop} \label{prop1}
	For any $a \in (0,a_*]$, there exists a Palais-Smale sequence $\{u_n\} \subset S_r(a)$ for $F_{\mu}$ restricted to $S_r(a)$ at level $M^0(a)$, with $P_\mu(u_n) \to 0$ as $n \to \infty$.
\end{Prop}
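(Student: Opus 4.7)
The plan is to apply Ghoussoub's min-max principle (Lemma \ref{lem4.3}) not directly to $F_\mu|_{S_r(a)}$ but to the auxiliary functional $\tilde F_\mu$ on the product manifold $X = \mathbb{R}^+ \times S_r(a)$, using the homotopy-stable family $\tilde\Gamma(a)$ introduced before Lemma \ref{lem4.4}. This is exactly the device from Jeanjean \cite{JJ}--Jeanjean--Le \cite{JL}: the extra parameter $s$ is what produces the Pohozaev information $P_\mu(u_n)\to 0$ for free, because $\partial_s \tilde F_\mu(s,v)=\Psi_v'(s)=\tfrac{1}{2s}P_\mu(s\star v)$.

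More precisely, I would equip $X$ with the standard product metric, take $B=\emptyset$ (the extended boundary), and verify that $\tilde\Gamma(a)$ is homotopy-stable in the sense of Definition \ref{def3.9}: this is immediate from the definition, since given $\tilde h\in\tilde\Gamma(a)$ and an admissible homotopy $\eta$, the composition $\eta(1,\tilde h(\cdot))$ still starts in $\{1\}\times\Lambda^+(a)$ and ends in $\{1\}\times E_a$. To pin down the sequence I would choose the closed set
\begin{equation*}
F:=\{(s,v)\in X : \tilde F_\mu(s,v)\ge M^0(a)\},
\end{equation*}
and check the two hypotheses of Lemma \ref{lem4.3}: (i) for every $\tilde h\in\tilde\Gamma(a)$, $\tilde h([0,\infty))\cap F\ne\emptyset$ by continuity and the definition $M^0(a)=\tilde M(a)$ (Lemma \ref{lem4.4}); (ii) $\sup_B\tilde F_\mu=-\infty\le M^0(a)=\inf_F\tilde F_\mu$. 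Taking a minimizing sequence $\tilde h_n\in\tilde\Gamma(a)$, Lemma \ref{lem4.3} yields a sequence $(s_n,v_n)\in X$ with $\tilde F_\mu(s_n,v_n)\to M^0(a)$ and $\|d\tilde F_\mu(s_n,v_n)\|\to 0$ in the cotangent space of $X$ at $(s_n,v_n)$.

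I would then transfer this back to $S_r(a)$ by setting $u_n:=s_n\star v_n\in S_r(a)$. Since $\tilde F_\mu(s_n,v_n)=F_\mu(u_n)$, the energy level is correct. The radial $s$-derivative $\partial_s\tilde F_\mu(s_n,v_n)\to 0$ rewrites, via \eqref{equa2.9}, as $\tfrac{1}{2s_n}P_\mu(u_n)\to 0$; provided one shows $s_n$ stays bounded away from $0$ and $\infty$, this gives $P_\mu(u_n)\to 0$. For the $v$-derivative on $T_{v_n}S_r(a)$, the isomorphism $\varphi\mapsto s_n\star\varphi$ of Lemma \ref{lem2.8} together with the identity $dF_\mu(s_n\star v_n)[s_n\star\varphi]=d_v\tilde F_\mu(s_n,v_n)[\varphi]$ shows that $\|dF_\mu|_{S_r(a)}(u_n)\|\to 0$, provided again that $s_n$ is bounded. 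By the principle of symmetric criticality the same holds for the restriction to all of $S(a)$.

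The main technical obstacle is therefore the control on $s_n$. I would handle it in two steps: to rule out $s_n\to 0$, use that $\tilde F_\mu(s_n,v_n)\to M^0(a)>0$ while $\Psi_{v_n}(s)\to 0^-$ as $s\to 0^+$ uniformly on sequences with bounded $\|v_n\|_{\dot H^{1/2}}$, $\|v_n\|_q$, $\|v_n\|_{2^*}$; this boundedness follows from the starting point of $\tilde h_n$ and from a careful truncation of the minimizing sequence, using Lemma \ref{lem4.2} to discard any part of the path lying below level $m(a)$ and reparametrizing if necessary so that the maximum of $\tilde F_\mu\circ\tilde h_n$ is attained on a compact sub-interval with uniformly bounded $v$-component. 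To rule out $s_n\to\infty$, use the dominant term $-\tfrac{s^{2^*/2}}{2^*}\|v\|_{2^*}^{2^*}$ in $\Psi_v(s)$: if $s_n\to\infty$ then $\tilde F_\mu(s_n,v_n)\to-\infty$, contradicting $\tilde F_\mu(s_n,v_n)\to M^0(a)>0$. (One keeps $\|v_n\|_{2^*}$ bounded below by the same truncation argument, since otherwise the path $\tilde h_n$ could not reach $E_a$ at finite time with bounded energy.) Once $s_n$ is trapped in a compact sub-interval of $(0,\infty)$, the conclusions $\|dF_\mu|_{S_r(a)}(u_n)\|\to 0$ and $P_\mu(u_n)\to 0$ follow as above, completing the proof.
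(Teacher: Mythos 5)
Your high-level approach is the same as the paper's (apply Ghoussoub's principle to the auxiliary functional $\tilde F_\mu$ on $\mathbb{R}^+\times S_r(a)$), but there are two genuine gaps, and the second is exactly where the real work lies.

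First, taking the extended boundary $B=\emptyset$ breaks the homotopy-stability check that you claim is immediate. With $B=\emptyset$ the only constraint on the deformation $\eta$ in Definition \ref{def3.9} is $\eta(0,\cdot)=\mathrm{id}$; nothing forces $\eta(1,\cdot)$ to preserve the endpoint conditions. In particular $\eta(1,\tilde h(0))$ need not lie in $\{1\}\times\Lambda^+(a)$, and $\eta(1,\tilde h(t))$ for $t\ge t_{\tilde h}$ need not stay in $\{1\}\times E_a$, so $\eta(\{1\}\times\tilde h([0,\infty)))$ may leave the class $\mathcal{F}$. The paper avoids this by taking $B=(1,\Lambda^+(a))\cup(1,E_a)$: the homotopy then fixes the two ends of every admissible path, and the deformed path stays in $\tilde\Gamma(a)$. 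The associated estimate $\sup_B\tilde F_\mu\le 0<M^0(a)\le\inf_F\tilde F_\mu$ (using Lemma \ref{lem4.4}) then verifies hypothesis (2) of Lemma \ref{lem4.3}.

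Second, and more seriously, your control on the $s$-component is hand-waved. You propose to "truncate the minimizing sequence, discard any part of the path below level $m(a)$, and reparametrize" to force $s_n$ into a compact interval, but none of this is made precise and it is not at all clear it can be. The paper's argument sidesteps the issue entirely with a simple but essential observation that you miss: since $\tilde F_\mu(t,u)=\tilde F_\mu(1,t\star u)$, given any minimizing sequence $z_n=(\alpha_n,\beta_n)\subset\tilde\Gamma(a)$ one may replace it with the "flattened" minimizing sequence $y_n=(1,\alpha_n\star\beta_n)$, whose first component is identically $1$. The fourth conclusion of Lemma \ref{lem4.3}, $\operatorname{dist}(x_n,A_n)\to 0$, then forces $|t_n-1|\to 0$ for the PS sequence $(t_n,w_n)$, with no further argument. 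That is precisely what makes $t_n$ bounded away from $0$ and $\infty$, which in turn is what lets you pass from $\partial_t\tilde F_\mu(t_n,w_n)\to 0$ and $\|\partial_u\tilde F_\mu(t_n,w_n)\|\to 0$ to $P_\mu(u_n)\to 0$ and $\|dF_\mu|_{S_r(a)}(u_n)\|\to 0$ for $u_n=t_n\star w_n$ via Lemma \ref{lem2.8}. Without this flattening device your proof is incomplete.
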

\begin{proof}
We verify the conditions of Lemma \ref{lem4.3} one by one. Let
\begin{align*}
 \mathcal{F}& = \big\{\tilde{h}([0,\infty)): \tilde{h} \in \tilde{\Gamma}(a) \big\}, \qquad B = (1, \Lambda^+(a)) \cup (1, E_a),\\
 F &= \{(t,u) \in \R^+ \times S_r(a): \tilde{F}_{\mu}(t, u) \geq \tilde{M}(a)\}.
\end{align*}
 We need to check that $\mathcal{F}$ is a homotopy stable family of compact subsets of $X$ with extended closed boundary $B$, and that $F$ is a dual set for $\mathcal{F}$, in the sense that assumptions (1) and (2) in Lemma \ref{lem4.3} are satisfied. By Lemma \ref{lem4.4}, we know $F\cap B=\emptyset$ and hence $F\setminus B=F$,   	
\begin{align*}
\sup_{(1, u) \in B} \tilde{F}_\mu(t, u) \leq \tilde{M}(a) \leq \inf_{(t, u) \in F} \tilde{F}_\mu(t, u).
\end{align*}
For any $A\in \mathcal{F}$, there exists $h_0\in  \tilde{\Gamma}(a)$ such that $A=h_0([0,\infty))$ and
\begin{align*}
	\tilde{M}(a) = \inf_{\tilde{h} \in \tilde{\Gamma}(a)} \max\limits_{t \in [0,\infty)} \tilde{F}_\mu(\tilde{h}(t)) \leq \max\limits_{t \in [0,\infty)} \tilde{F}_\mu(h_0(t)),
\end{align*}
we have
\begin{equation*}
A\cap F\setminus B=A\cap F\neq \emptyset, \quad \forall A\in \mathcal{F}.
\end{equation*}
Now, for all $(t, u) \in \R^+ \times S_r(a)$, we have
\begin{align*}
\tilde{F}_\mu(t, u) = F_{\mu}(t\star u) = \tilde{F}_\mu(1, t\star u).
\end{align*}
Hence, for any minimizing sequence $(z_n = (\alpha_n, \beta_n) ) \subset \tilde{\Gamma}(a)$ for $\tilde{M}(a)$, the sequence $(y_n = (1, \alpha_n\star\beta_n) )$ is also a minimizing sequence for $\tilde{M}(a)$. Consequently, by Lemma \ref{lem4.3}, there exists a Palais-Smale sequence $\{(t_n,w_n)\}\subset \mathbb{R}^+\times S_r(a)$ for $\tilde{F}_{\mu}|_{\mathbb{R}^+\times {S_r(a)}}$ at level $\tilde{M}(a)>0$ such that
\begin{equation}  \label{eqq7.7}
\partial_{t} \tilde{F_{\mu}}\left(t_{n}, w_{n}\right) \rightarrow 0 \quad \text { and } \quad\big\|\partial_{u} \tilde{F_{\mu}}\left(t_{n}, w_{n}\right)\big\|_{\left(T_{w_{n}} S_{r}(a)\right)^{'}} \rightarrow 0 \quad \text { as } n \rightarrow \infty,
\end{equation}
with the additional property that
\begin{equation}   \label{eqq7.8}
\left|t_{n}-1\right|+\operatorname{dist}_{H^{1/2}}\big(w_{n}, \alpha_n\star\beta_{n}([0,\infty))\big) \rightarrow 0 \quad \text { as } n \rightarrow \infty.
\end{equation}
The first condition in (\ref{eqq7.7}) reads $P_{\mu}\left( t_n\star w_{n}\right) \rightarrow 0$, and the second condition in (\ref{eqq7.7}) gives that $\forall \varphi \in T_{w_{n}} S_{r}(a)$
\begin{equation*} 
\begin{aligned}
 t_n \int_{\mathbb{R}^{N}} \sqrt{\!-\!\Delta} w_{n} \varphi- \mu t_n^{\frac{q\gamma_q}{2}}\int_{\mathbb{R}^{N}} \left|w_{n}\right|^{q\!-\!2} w_{n} {\varphi}-t^{\frac{2^*}{2}}_n \int_{{\mathbb{R}^N}} |w_n|^{2^*-2}w_n\varphi =\!o(1)|| \varphi||_{H^{1/2}}.
\end{aligned}
\end{equation*}
Since $\{t_n\}$ is bounded from above and from below,
due to (\ref{eqq7.8}), we have
\begin{equation} \label{equa4.7}
d F_{\mu}\left({t_n}\star w_n\right)\left[t_n\star\varphi\right]=o(1)\|\varphi\|_{H^{1/2}}=o(1)\left\| t_n\star\varphi\right\|_{H^{1/2}}~~\text{as}~~n \rightarrow \infty, \ \ \forall \varphi \in T_{w_{n}} S_{r}(a).
\end{equation}
By Lemma \ref{lem2.8}, (\ref{equa4.7}) implies that $\{u_n:=t_n\star w_n\} \subset S_{r}(a)$ is a Palais-Smale sequence for $F_{\mu}|_{S_{r}(a)}$ (thus a Palais-Smale sequence for $F_{\mu}|_{S(a)}$, since the problem is invariant under rotations) at level $M^0(a)>0$, with $P_{\mu}(u_n)\to 0$.	
\end{proof}

\begin{Prop} \label{prop2}
For any $a \in (0,a_*]$, if
\begin{align}\label{b1}
M^0(a) < m(a) + \frac{\mathcal{S}^N}{2N},
\end{align}
then the Palais-Smale sequence obtained in Proposition \ref{prop1} is, up to subsequence, strongly convergent in $H^{1/2}_r(\R^N)$.	
\end{Prop}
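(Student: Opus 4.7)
The plan is to combine concentration--compactness with the Pohozaev identity, crucially exploiting that the subcritical term $\mu|u|^{q-2}u$ forces the limiting Lagrange multiplier to be strictly negative. Let $\{u_n\}\subset S_r(a)$ be the Palais--Smale sequence produced by Proposition~\ref{prop1}, so $F_\mu(u_n)\to M^0(a)>0$, $P_\mu(u_n)\to 0$, and $dF_\mu|_{S_r(a)}(u_n)\to 0$.

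First, I would establish boundedness and a weak limit. Combining $F_\mu(u_n)\to M^0(a)$ with $P_\mu(u_n)\to 0$ yields
\[
M^0(a)+o(1)=\frac{1}{2N}\|u_n\|_{\dot{H}^{1/2}}^2-\mu\,\frac{2^*-q\gamma_q}{q\cdot 2^*}\|u_n\|_q^q,
\]
so the fractional Gagliardo--Nirenberg inequality together with $q\gamma_q<2$ gives $\{u_n\}$ bounded in $H^{1/2}(\R^N)$. Testing the Euler--Lagrange equation against $u_n$ yields bounded Lagrange multipliers $\lambda_n\to\lambda_a$, while the compact embedding $H^{1/2}_r(\R^N)\hookrightarrow L^q(\R^N)$ for $q\in(2,2^*)$ gives $u_n\to u_a$ in $L^q$ (after passing to a subsequence). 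The weak limit satisfies $\sqrt{-\Delta}u_a=\lambda_a u_a+\mu|u_a|^{q-2}u_a+|u_a|^{2^*-2}u_a$, and by Lemma~\ref{lem2.7}, $P_\mu(u_a)=0$.

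I would then rule out $u_a\equiv 0$. If it vanished, then $\|u_n\|_q^q\to 0$, so $P_\mu(u_n)\to 0$ forces $\|u_n\|_{\dot{H}^{1/2}}^2-\|u_n\|_{2^*}^{2^*}\to 0$; Lemma~\ref{lem2.1} then yields $\lim\|u_n\|_{\dot{H}^{1/2}}^2\in\{0\}\cup[\mathcal{S}^N,\infty)$ and consequently
\[
M^0(a)=\tfrac{1}{2N}\lim\|u_n\|_{\dot{H}^{1/2}}^2\in\{0\}\cup\big[\mathcal{S}^N/(2N),\infty\big),
\]
both alternatives contradicting $0<M^0(a)<m(a)+\mathcal{S}^N/(2N)<\mathcal{S}^N/(2N)$. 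Hence $u_a\ne 0$, and combining the limit equation tested against $u_a$ with $P_\mu(u_a)=0$ yields
\[
\lambda_a=-\frac{\mu(1-\gamma_q)\|u_a\|_q^q}{\|u_a\|_2^2}<0.
\]

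The core step is a pair of complementary Brezis--Lieb identities for $v_n:=u_n-u_a$. Denote by $A$, $B$, $C$ the subsequential limits of $\|v_n\|_{\dot{H}^{1/2}}^2$, $\|v_n\|_{2^*}^{2^*}$, and $\|v_n\|_2^2=a^2-\|u_a\|_2^2$. The Brezis--Lieb splittings of $P_\mu(u_n)\to 0=P_\mu(u_a)$ (with strong $L^q$ convergence) yield $A-B=0$, while testing the Palais--Smale equation against $v_n$ and using the analogous splittings produces $A-\lambda_a C-B=0$. Subtracting gives $\lambda_a C=0$; since $\lambda_a<0$, necessarily $C=0$, so $u_n\to u_a$ in $L^2(\R^N)$ and $\|u_a\|_2=a$. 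If $B>0$, then $A=B\ge\mathcal{S}^N$ by Lemma~\ref{lem2.1}, and the Brezis--Lieb decomposition of the energy gives
\[
M^0(a)=F_\mu(u_a)+\tfrac{A}{2}-\tfrac{B}{2^*}=F_\mu(u_a)+\tfrac{B}{2N}\ge F_\mu(u_a)+\tfrac{\mathcal{S}^N}{2N};
\]
since $u_a\in S(a)\cap\Lambda(a)$, Lemma~\ref{lem4.2}(4) together with the fact that $\Lambda^-(a)$ has positive energy ensures $F_\mu(u_a)\ge m(a)$, contradicting~\eqref{b1}. Hence $B=A=0$, and combined with $C=0$ this gives $u_n\to u_a$ in $H^{1/2}_r(\R^N)$. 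The main obstacle is deriving the two complementary identities $A=B$ and $A=B+\lambda_a C$; it is precisely their combination---exploiting $\lambda_a<0$ from the subcritical perturbation---that prevents any $L^2$ mass from escaping and opens the door to the energy comparison with $m(a)+\mathcal{S}^N/(2N)$.
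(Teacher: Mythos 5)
Your proof is correct and relies on the same ingredients as the paper's: Brezis--Lieb decompositions, the Pohozaev splitting, the Sobolev dichotomy ($B=0$ or $B\geq\mathcal{S}^N$), the strict negativity $\lambda_a<0$, and the energy comparison against $m(a)+\mathcal{S}^N/(2N)$.

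The one genuine organizational difference is worth pointing out. You extract the vanishing of the defect $L^2$-mass, $C=\lim\|v_n\|_2^2=0$, \emph{before} splitting into cases, by pairing the Pohozaev Brezis--Lieb identity $A=B$ with the ``Nehari'' identity $A=B+\lambda_a C$ obtained by testing the Palais--Smale equation minus the limit equation against $v_n$; subtracting and using $\lambda_a<0$ gives $C=0$ at once. This lets you conclude $u_a\in S(a)\cap\Lambda(a)$ immediately, so $F_\mu(u_a)\geq m(a)$ follows directly from the structure of $\Lambda(a)=\Lambda^+(a)\cup\Lambda^-(a)$ without invoking monotonicity of $a\mapsto m(a)$. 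The paper instead does the dichotomy on $B$ first, invokes $F_\mu(u)\geq m(\|u\|_2)$ together with the non-increasing property of $m$, and only establishes $L^2$-convergence afterwards (in the $B=0$ branch) by testing the difference. Both lines are valid; yours is arguably the cleaner way to combine the two Brezis--Lieb splittings and makes the role of $\lambda_a<0$ more transparent (the paper's displayed identity $\|v_n\|_{\dot H^{1/2}}^2=\|v_n\|_{2^*}^{2^*}+\lambda\|v_n\|_2^2+o(1)$ mixes the two splittings and is most naturally read as the Nehari one, which your derivation clarifies).
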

\begin{proof}
The proof is divided into four main steps.

\noindent (1) Boundedness of $\{u_n\}$ in $H^{1/2}_r(\R^N)$.
\noindent Since $P_{\mu}(u_n)=||u_n||_{\dot{H}^{1/2}}^2
-\mu\gamma_q{||u_n||}_{q}^{q}-\|u_n\|^{2^*}_{2^*}=o_n(1)$. By Lemma \ref{lem2.2}, we have
\begin{align*}
F_{\mu}\left(u_{n}\right)&=\frac{1}{2}||u_{n}||_{\dot{H}^{1/2}}^{2}-\frac{\mu}{q}\|u_n\|^{q}_{q}-\frac{1}{2^*}\|u_n\|^{2^*}_{2^*}\\
&\ge\frac{1}{2N}||u_{n}||_{\dot{H}^{1/2}}^{2}-\frac{\mu(2^*-q\gamma_q)}{q2^*}C^{q}_{opt} a^{q(1-\gamma_q)}||u_{n}||_{\dot{H}^{1/2}}^{q\gamma_q}
+o_n(1).
\end{align*}
Since $2<q<2+\frac{2}{N}$, we get $ F_{\mu}\left(u_{n}\right)\le M^0(a)+1$ as $n$ large enough.
This implies that $||u_{n}||_{\dot{H}^{1/2}}\leq C$. So $\{u_n\}$ is bounded in $H^{1/2}$ because $||u_{n}||_{2}=a$.

\noindent (2) $\exists$ Lagrange multipliers $\lambda_{n} \rightarrow \lambda \in \mathbb{R}$. 
By Lemma \ref{lem2.1}, we deduce that there exists $u \in H_{r}^{1/2}(\R^N)$ such that, up to a subsequence, $u_{n} \rightharpoonup u$ weakly in $H^{1/2}_r(\R^N)$, $u_{n} \rightarrow u$ strongly in $L^{r}\left(\mathbb{R}^{N}\right)$ for $r \in\big(2,\frac{2N}{N-1}\big)$, and a.e. in $\R^N$. Now, since $\left\{u_{n}\right\}$ is a Palais-Smale sequence of $\left.F_{\mu}\right|_{S(a)}$, by the Lagrange multipliers rule there exists $\lambda_{n} \in \mathbb{R}$ such that
\begin{equation} \label{eq4.2}
\int_{\mathbb{R}^{N}} [\sqrt{-\Delta}u_{n} \cdot {\varphi}-\lambda_{n} u_{n}{\varphi}]-\mu\int_{\mathbb{R}^{N}} \left|u_{n}\right|^{q\!-\!2} u_{n} {\varphi}-\int_{{\mathbb{R}^N}}|u_n|^{2^*-2}u_n\varphi\!=\!o_n(1)(|| \varphi||_{H^{1/2}})
\end{equation}
for every $\varphi \in H^{1/2}(\R^N)$, where $o_n(1) \rightarrow 0$ as $n \rightarrow \infty$. In particular, take $\varphi=u_n$, then
$$\lambda_{n} a^2=||u_n||_{\dot{H}^{1/2}}^2
-\mu{||u_n||}_{q}^{q}-\|u_n\|^{2^*}_{2^*}+o_n(1),$$
and the boundedness of $\left\{u_{n}\right\}$ in $H^{1/2}_r(\R^N)$ implies that $\left\{\lambda_{n}\right\}$ is bounded as well; thus, up to a subsequence $\lambda_{n} \rightarrow \lambda \in \mathbb{R} .$

\noindent (3) We claim that $\lambda<0$ and $u\not \equiv0$. Recalling that $P_{\mu}\left(u_{n}\right) \rightarrow 0$, we have
$$\lambda_{n} a^2=-\mu(1-\gamma_q)||u_{n}||_{q}^{q}+o(1).$$
Let $n\to+\infty$, then $\lambda a^2=-\mu(1-\gamma_q)||u||_{q}^{q}$. Since $\mu>0$, we deduce that $\lambda \leq 0$, with equality if and only if $u\equiv0$. If $\lambda_{n} \rightarrow 0$, we have $\mathop {\lim }\limits_{n  \to \infty}||u_{n}||_{q}^{q}=0$. Using again $P_{\mu}\left(u_{n}\right) \rightarrow 0$, we have $\mathop {\lim }\limits_{n  \to \infty} ||u_n||_{\dot{H}^{1/2}}^2=\mathop {\lim }\limits_{n  \to \infty}{||u_n||}_{2^*}^{2^*}=\ell$. Therefore, by the Sobolev inequality $\ell \geq {\mathcal{S}}  \ell^{\frac{N-1}{N}}$.
We have $\ell=0$ or $\ell \geq {\mathcal{S}}^{N}$. Since
$$0\not =M^0(a)=\lim_{n\to+\infty} F_{\mu}\left(u_{n}\right)=\lim_{n\to+\infty}\Big[\frac{1}{2}
{||u_{n}||}_{\dot{H}^{1/2}}^2-\frac{\mu}{q}\|u_n\|^{q}_{q}
-\frac{1}{2^*} {||u_{n}||}_{2^*}^{2^*}\Big]=\frac{1}{2N}\ell,$$
we have $\ell \not =0$ and $\ell \geq {\mathcal{S}}^{N}$. Then we have $M^0(a)=\mathop {\lim }\limits_{n  \to \infty} F_{\mu}\left(u_{n}\right)=\frac{1}{2N}\ell \geq\frac{1}{2N} {\mathcal{S}}^{N}$,
and this contradicts our assumptions $M^0(a)<m(a)+\frac{1}{2N} {\mathcal{S}}^{N}<\frac{1}{2N} {\mathcal{S}}^{N}$. Therefore, we have $\lambda <0$ and $u\not\equiv0$.

\noindent (4) $u_{n} \rightarrow u$ in $H^{1/2}_r(\R^N)$. Since $u_{n} \rightharpoonup u\not\equiv0$ weakly in $H^{1/2}_r(\R^N)$, then (\ref{eq4.2}) imply that
\begin{equation} \label{equa3.7}
  d F_{\mu}(u) \varphi-\lambda \int_{\mathbb{R}^{N}} u {\varphi}=0,~~~~~~~~\forall \varphi \in H^{1/2}(\R^N).
\end{equation}
That is $u$ is a weak radial (and real) solution to
\begin{equation}\label{ac1}
\sqrt{-\Delta} u=\lambda u+\mu|u|^{q-2} u+|u|^{2^*-2} u ~~~~\ \ \mbox{in}~~~~{\R}^N.
\end{equation}
Therefore, we have $P_{\mu}(u)=0$.
Denote $v_{n}=u_{n}-u$, then $ v_{n} \rightharpoonup 0$ in $H^{1/2}_r\left(\mathbb{R}^{N}\right)$ and therefore
$$||u_n||_{\dot{H}^{1/2}}^2=||u||_{\dot{H}^{1/2}}^2+||v_n||_{\dot{H}^{1/2}}^2+o(1).$$
By the Br\'ezis-Lieb lemma in \cite{WM}, we have
$$F_\mu(u_n)=F_\mu(u)+F_\mu(v_n)+o_n(1),
~~~~~~~~P_\mu(u_n)=P_\mu(u)+P_\mu(v_n)+o_n(1).$$
Since $v_{n} \rightarrow 0$ strongly in $L^{q}(\R^N)$, $P_{\mu}\left(u_{n}\right)=o(1)$ and $P_{\mu}(u)=0$, we deduce that $\mathop  \|v_n\|_{\dot{H}^{1/2}}^2
=\mathop \|v_n\|_{2^*}^{2^*}+\lambda\|v_n\|^2_2+o_n(1)$.
Therefore, by the Sobolev inequality $S\|v_n\|^2_{2^*}\le \|v_n\|_{\dot{H}^{1/2}}^2\le \|v_n\|_{2^*}^{2^*}+o_n(1)$.
We have $\|v_n\|^{2^*}_{2^*}=0$ or $\|v_n\|^{2^*}_{2^*} \geq {\mathcal{S}}^N$.

If $\|v_n\|^{2^*}_{2^*}\geq {\mathcal{S}}^N$, since $a\mapsto m(a)$ is non-increasing, we have
\begin{equation*}
\begin{aligned}
M^0(a)=\lim_{n\to+\infty} F_{\mu}\left(u_{n}\right)&=F_{\mu}\left(u\right)+ \frac{1}{2N}\|v_n\|_{\dot{H}^{1/2}}^2 +o_n(1)\\ 
&\ge m(\|u\|_2)+\frac{1}{2N} {\mathcal{S}}^{N}\\
&\ge m(a)+\frac{1}{2N} {\mathcal{S}}^{N},
\end{aligned}
\end{equation*}
which is a contradiction.

If instead $\|v_n\|^{2^*}_{2^*} =0$, then $ u_{n} \rightarrow u$ in $\dot{H}^{1/2}_r(\mathbb{R}^{N})$. In order to prove that $ u_{n} \rightarrow u$ in $L^2\left(\mathbb{R}^{N}\right)$, let $\varphi=u_{n}-u$ in (\ref{equa3.7}), test (\ref{ac1}) with $u_n-u$, and then subtract, we get
\begin{equation*}
\begin{aligned}
||u_{n}-u||_{\dot{H}^{1/2}}^{2}-\int_{\mathbb{R}^{N}}(\lambda_{n} u_{n}-\lambda u)(u_{n}-u)&-\mu \int_{{\mathbb{R}^N}}\big(|u_n|^{q-2}u_n-|u|^{q-2}u\big)(u_n-u) \\
&=\int_{\mathbb{R}^{N}}
\left(\left|u_{n}\right|^{2^*-2} u_{n}-|u|^{2^*-2} u\right)\left(u_{n}-u\right)+o(1).
\end{aligned}
\end{equation*}
Then, we have
$$
0=\lim _{n \rightarrow \infty} \int_{\mathbb{R}^{N}}\left(\lambda_{n} u_{n}-\lambda u\right)\left(u_{n}-u\right)=\lim _{n \rightarrow \infty} \lambda \int_{\mathbb{R}^{N}}\left(u_{n}-u\right)^{2},
$$
and we deduce that $ u_{n} \rightarrow u$ strongly in $H^{1/2}_r(\mathbb{R}^{N})$.
Therefore, the proposition is proved.

\end{proof}

\begin{Prop} \label{prop3}
Let $ 2<q<2+\frac{2}{N}$. For any $a \in (0,a_*]$, it holds that
\begin{align*}
M^0(a)<m(a) + \frac{\mathcal{S}^N}{2N}.
\end{align*}
\end{Prop}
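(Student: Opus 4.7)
The plan is to follow the approach of \cite{JL}: construct an admissible path $h_\varepsilon \in \Gamma^0(a)$ from $L^2$-preserving dilations of a test function $w_\varepsilon$ obtained by superposing a fractional Aubin--Talenti bubble on top of the local minimizer $u_a$ from Theorem \ref{th1.1}, and show that along this path $\max F_\mu$ falls strictly below $m(a) + \mathcal{S}^N/(2N)$. The improvement over the naive Sobolev threshold $\mathcal{S}^N/(2N)$ will come from the positive interaction cross term in $\int |u_a + \psi_\varepsilon|^{2^*}$, which is the ingredient that was absent from the straightforward concentration analysis on $S(a)$.

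For the construction I pick $u_a$ nonnegative, radially decreasing, and (by continuity together with the strong maximum principle) strictly positive at the origin. Let $U$ realize $\mathcal{S}$, so $\|U\|_{\dot{H}^{1/2}}^{2} = \|U\|_{2^*}^{2^*} = \mathcal{S}^{N}$, set $U_\varepsilon(x) := \varepsilon^{-(N-1)/2} U(x/\varepsilon)$, and $\psi_\varepsilon := \eta U_\varepsilon$ for a fixed radial cutoff $\eta \in C_c^\infty(\R^N)$ with $\eta \equiv 1$ near $0$. Normalize $w_\varepsilon := \tau_\varepsilon (u_a + \psi_\varepsilon)$ with $\tau_\varepsilon := a/\|u_a+\psi_\varepsilon\|_2 \to 1$, so that $w_\varepsilon \in S_r(a)$. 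By Corollary \ref{cor4.2} the fiber $\Psi_{w_\varepsilon}$ has, for $\varepsilon$ small, a local minimum at some $t^+_{w_\varepsilon}$ with $\Psi_{w_\varepsilon}(t^+_{w_\varepsilon})<0$ and a unique global maximum at some $t^-_{w_\varepsilon}$. Pick a continuous strictly increasing $\gamma:[0,\infty)\to(0,\infty)$ with $\gamma(0) = t^+_{w_\varepsilon}$ and $\gamma(s)\to\infty$, and define $h_\varepsilon(s) := \gamma(s)\star w_\varepsilon$. Then $h_\varepsilon(0)\in\Lambda^+(a)$ by Lemma \ref{lem4.2}, and since $\Psi_{w_\varepsilon}(t)\to -\infty$ as $t\to\infty$ we have $h_\varepsilon(s)\in E_a$ for all large $s$; hence $h_\varepsilon\in\Gamma^0(a)$ and
$$
M^0(a) \;\le\; \max_{s\ge 0} F_\mu(h_\varepsilon(s)) \;=\; \max_{t>0}\Psi_{w_\varepsilon}(t) \;=\; \Psi_{w_\varepsilon}(t^-_{w_\varepsilon}).
$$

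The sharp estimate uses the expansion
$$
\int |u_a+\psi_\varepsilon|^{2^*} = \|u_a\|_{2^*}^{2^*} + \|\psi_\varepsilon\|_{2^*}^{2^*} + 2^*\!\int u_a^{2^*-1}\psi_\varepsilon + 2^*\!\int u_a\,\psi_\varepsilon^{2^*-1} + \text{lower order},
$$
combined with the standard fractional Aubin--Talenti asymptotics $\|\psi_\varepsilon\|_{\dot{H}^{1/2}}^{2} = \|\psi_\varepsilon\|_{2^*}^{2^*} = \mathcal{S}^N + o(1)$, $\|\psi_\varepsilon\|_q^q = O(\varepsilon^{N - q(N-1)/2}) = o(\varepsilon^{(N-1)/2})$ for $q\in(2,2+2/N)$, and the key lower bound $\int u_a\,\psi_\varepsilon^{2^*-1}\,dx \;\ge\; c\,u_a(0)\,\varepsilon^{(N-1)/2}$. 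This lets one split
$$
\Psi_{w_\varepsilon}(t) = \Psi_{u_a}(t) + \Bigl[\tfrac{t}{2}\|\psi_\varepsilon\|_{\dot{H}^{1/2}}^{2} - \tfrac{t^{2^*/2}}{2^*}\|\psi_\varepsilon\|_{2^*}^{2^*}\Bigr] + (\text{interaction}).
$$
Maximizing over $t$: the first summand is at most $m(a) = \max_t\Psi_{u_a}(t)$, the bubble summand is at most $\mathcal{S}^N/(2N) + o(1)$, and the critical cross term contributes a strictly negative correction of order $\varepsilon^{(N-1)/2}$ at the maximizer. All other interaction pieces are $o(\varepsilon^{(N-1)/2})$. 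For $\varepsilon$ small enough,
$$
M^0(a) \;\le\; m(a) + \frac{\mathcal{S}^N}{2N} - c\,\varepsilon^{(N-1)/2} + o(\varepsilon^{(N-1)/2}) \;<\; m(a) + \frac{\mathcal{S}^N}{2N}.
$$

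The main obstacle is the careful order bookkeeping in this last step. One must verify that the negative cross term genuinely dominates \emph{every} other error: the nonlocal kinetic cross term $\int u_a\,\sqrt{-\Delta}\,\psi_\varepsilon$ (which, unlike its local counterpart, requires an estimate based on the singular integral definition of $\sqrt{-\Delta}$), the $L^2$-mass of the bubble $\|\psi_\varepsilon\|_2^2$ (which in the borderline dimension $N=2$ picks up a logarithmic factor $\varepsilon\log(1/\varepsilon)$), and the residual $\tau_\varepsilon - 1$. That the balance works in every admissible dimension $N\ge 2$ is precisely the fractional analogue of Jeanjean--Le's dimensional restriction $N\ge 4$ for the local Laplacian and constitutes the technical heart of the argument.
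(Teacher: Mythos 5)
Your approach is genuinely different from the paper's, and it contains two substantive problems that would prevent it from going through as written.

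First, you pick the test function to be a \emph{centered} superposition $w_\varepsilon = \tau_\varepsilon(u_a + \psi_\varepsilon)$, normalize it onto $S(a)$, and then run the $t\star$-dilation fiber through $w_\varepsilon$. The paper does something else entirely: it works with the class $\mathcal{G}(a)$ of paths $\gamma_n(t)=u_{a_n}(\cdot - y_n)+tU_{\varepsilon_n}$, $t\in[0,t_1]$, where the ground state is \emph{translated far away} ($|y_n|\to\infty$), the starting mass $a_n^2 = a^2 - 2t_1^2\|U_{\varepsilon_n}\|_2^2$ is strictly below $a^2$, and Lemma~\ref{lem4.5} projects such paths onto $S(a)$ without increasing the energy. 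The translation (Lemma~\ref{lem4.8}) kills the interaction cross terms so they are $O(\|U_{\varepsilon_n}\|_2^2)$ — the paper does \emph{not} exploit the critical cross term $\int u_a \psi_\varepsilon^{2^*-1}$; in fact that term is deliberately made negligible. The dominant negative contribution in the paper is the subcritical bubble term $-\frac{\mu t_0^q}{q}\|U_{\varepsilon_n}\|_q^q \sim -\varepsilon_n^{N-(N-1)q/2}$, which beats the positive errors $\|U_{\varepsilon_n}\|_2^2 \sim \varepsilon_n$ (or $\varepsilon_n|\log\varepsilon_n|$ for $N=2$) precisely because $N - \frac{(N-1)q}{2}<1 \Longleftrightarrow q>2$, uniformly in $N\geq 2$.

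Second, the step where you claim ``the first summand is at most $m(a)=\max_t\Psi_{u_a}(t)$'' is wrong: $m(a) = \Psi_{u_a}(1)$ where $t=1=t^+_{u_a}$ is the \emph{local minimum} of the fiber, while $\max_t\Psi_{u_a}(t)=\Psi_{u_a}(t^-_{u_a})$ is the (strictly positive) mountain-pass value of the pure $u_a$ fiber. Plugging this into your subadditive maximization would produce an upper bound starting at $\max_t\Psi_{u_a}(t)+\frac{\mathcal{S}^N}{2N}$, not $m(a)+\frac{\mathcal{S}^N}{2N}$, and you would have to argue separately that the maximizer $t^-_{w_\varepsilon}$ of $\Psi_{w_\varepsilon}$ sits close enough to $1$ that $\Psi_{u_a}(t^-_{w_\varepsilon})-m(a)$ is lower order, which is not done. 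Finally, the claimed dominance ``$-c\,\varepsilon^{(N-1)/2}$ beats everything'' fails for $N\geq 3$: the critical cross term scales as $\varepsilon^{(N-1)/2}$, but $\|\psi_\varepsilon\|_2^2\sim\varepsilon$ for $N>2$, so $\varepsilon^{(N-1)/2}\not\gg\varepsilon$ already at $N=3$ (same order) and the inequality reverses for $N\geq4$. This is exactly why the paper translates the ground state and relies on the $L^q$ term rather than the critical cross term. (The paper's Remark~1.2 does mention a centered-superposition alternative in the spirit of Wei--Wu for the general fractional operator, but that argument also builds the path by varying the amplitude of the bubble, not by dilating the sum, and requires its own careful mass bookkeeping; it is not the naive decomposition-and-maximize you sketch.)
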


Proposition \ref{prop3} is vital in proving Theorem \ref{th1.2} and its proof can be divided into two parts via different type of arguments.
Let
\begin{align*}
M(a) := \inf_{\gamma \in \Gamma(a)} \max\limits_{t \in [0,\infty)} F_{\mu}(\gamma(t)),
\end{align*}
where
\begin{align*}
\Gamma(a) := \big\{\gamma \in \mathit{C}([0,\infty), S(a)) &: \gamma(0) \in V(a) \cap \{ u: F_{\mu}(u) <0\}, \\
&\quad \exists t_\gamma \ \text{such that}\  \gamma(t) \in E_a, \, \forall t \geq t_\gamma\big\}.
\end{align*}
\begin{Prop} \label{prop4}
For any $a\in (0,a_*]$,  it holds that
\begin{align*}
M^0(a) = M(a).
\end{align*}
\end{Prop}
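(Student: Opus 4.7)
The plan is to identify both min--max levels with the infimum of $F_\mu$ over the Pohozaev manifold $\Lambda^-(a)$ (respectively its radial part $\Lambda^-_r(a) := \Lambda^-(a) \cap S_r(a)$), and then reconcile those two infima using Schwarz symmetrization (Lemma \ref{lem2.9}). The advantage of this detour is that it replaces the awkward comparison between a radial and a non-radial class of paths by the comparison of two numerical quantities.

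First I would show $M(a) = \inf_{\Lambda^-(a)} F_\mu$. For the $\geq$ inequality, any $\gamma \in \Gamma(a)$ starts at a point of $V(a) \cap \{F_\mu < 0\} \subset W(a)$ (Lemma \ref{lem4.2}(3)) and ends inside $E_a$. Since $\inf_{W(a)} F_\mu = m(a)$ (Lemma \ref{lem4.2}(4)) while $F_\mu < 2m(a) < m(a)$ on $E_a$ (using $m(a)<0$), the sets $W(a)$ and $E_a$ are disjoint; therefore $\gamma$ must cross $\partial W(a) = \Lambda^-(a)$, giving $\max_t F_\mu(\gamma(t)) \geq \inf_{\Lambda^-(a)} F_\mu$. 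For the $\leq$ inequality, pick any $u \in \Lambda^-(a)$ (so $t_u^-=1$) and set $\gamma(t) := \phi(t) \star u$ with a continuous $\phi : [0,\infty) \to (0,\infty)$, $\phi(0)$ small enough that $\|\phi(0)\star u\|_{\dot{H}^{1/2}}^2 = \phi(0)\|u\|_{\dot{H}^{1/2}}^2 < \rho_0^2$ and $\Psi_u(\phi(0))<0$, and $\phi(t) \to \infty$. Then $\gamma \in \Gamma(a)$ and $\max_t F_\mu(\gamma(t)) = \max_{s>0} \Psi_u(s) = \Psi_u(1) = F_\mu(u)$.

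A fully parallel argument yields $M^0(a) = \inf_{\Lambda^-_r(a)} F_\mu$. The lower bound is identical, once one notes that a path in $S_r(a)$ meets $\Lambda^-(a)$ only at radial points. For the upper bound, given $u \in \Lambda^-_r(a)$ I would take $\gamma(t) := (t_u^+ + t) \star u$: this path is radial, $\gamma(0) = t_u^+\star u \in \Lambda^+(a)$ by the definition of $t_u^+$, eventually $\gamma(t)\in E_a$, and $\max_t F_\mu(\gamma(t)) = \max_{s \geq t_u^+} \Psi_u(s) = \Psi_u(1) = F_\mu(u)$. It then remains to show $\inf_{\Lambda^-(a)} F_\mu = \inf_{\Lambda^-_r(a)} F_\mu$. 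The inequality $\leq$ is trivial. For the reverse, given $u \in \Lambda^-(a)$, let $u^*$ denote its symmetric decreasing rearrangement; by Lemma \ref{lem2.9} and the invariance of $L^p$--norms, $\Psi_{u^*}(t) \leq \Psi_u(t)$ for every $t>0$. Since $u^* \in S_r(a)$, there exists $t_{u^*}^->0$ with $t_{u^*}^-\star u^* \in \Lambda^-_r(a)$, and
\begin{equation*}
F_\mu\bigl(t_{u^*}^-\star u^*\bigr) = \max_{s>0}\Psi_{u^*}(s) \leq \max_{s>0}\Psi_u(s) = F_\mu(u).
\end{equation*}
Taking the infimum over $u \in \Lambda^-(a)$ yields the reverse inequality, and combining the three steps gives $M^0(a)=M(a)$.

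The main obstacle I expect is precisely the rearrangement step: one cannot simply radialize an arbitrary path $\gamma \in \Gamma(a)$ pointwise, because Schwarz symmetrization is not known to be continuous on $H^{1/2}(\R^N)$, so $t \mapsto \gamma(t)^*$ need not be a continuous curve in $S_r(a)$. The Pohozaev-manifold detour sketched above is designed precisely to bypass this difficulty: rearrangement is only applied to a single function at a time, after which the scaling map $s \mapsto s \star u^*$, which is continuous, suffices to rebuild a radial path in $\Gamma^0(a)$ attaining the required maximum.
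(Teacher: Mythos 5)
Your proposal is correct and follows essentially the same route as the paper: both reduce the problem to comparing the infima of $F_\mu$ over $\Lambda^-(a)$ and $\Lambda^-_r(a)$, using the crossing argument $\gamma$ must hit $\partial W(a) = \Lambda^-(a)$, the Schwarz rearrangement estimate $\Psi_{u^*}\le\Psi_u$ from Lemma \ref{lem2.9}, and a dilation path through $t^+_u\star u\in\Lambda^+(a)$ to produce competitors in $\Gamma^0(a)$. The only cosmetic difference is that the paper obtains the easy inequality $M(a)\le M^0(a)$ directly from the inclusion $\Gamma^0(a)\subset\Gamma(a)$ and then closes the loop via the one-way chain $M(a)\ge\inf_{\Lambda^-(a)}F_\mu\ge\inf_{\Lambda^-_r(a)}F_\mu\ge M^0(a)$, whereas you establish each intermediate equality in full, which adds the (correct but unnecessary) sub-steps $M(a)\le\inf_{\Lambda^-(a)}F_\mu$ and $M^0(a)\ge\inf_{\Lambda^-_r(a)}F_\mu$.
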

\begin{proof}
Since $\Gamma^0(a)\subset \Gamma(a)$, we have $M(a)\le M^0(a)$.
We proceed to show that $M(a)\ge M^0(a)$. 
\medbreak
\noindent (1) For any $a \in (0, a_*]$, we have
\begin{align*}
M(a) \geq \inf_{u \in \Lambda^-(a)} F_{\mu}(u).
\end{align*}

Let $h \in \Gamma(a)$, $h(0) \in V(a) \cap \{u : F_{\mu}(u) <0 \}$, by Lemma \ref{lem4.2}, then we have $h(0) \in W(a)$ and $t_{h(0)}^- > 1$.  Since $h \in \Gamma(a)$, we also have that $F_{\mu}(h(t)) \leq 2m(a) < m(a)$ for $t$ large enough. Thus, using Lemma \ref{lem4.2} again, we get that $h(t) \not \in W(a)$ for $t$ large enough or equivalently that $t_{h(t)}^-<1$ for such $t >0$. By Corollary \ref{cor4.1}, the continuity of $h$ and of $u \mapsto t_u^-$, we deduce that there exists a $t_0 >0$ such that $t_{h(t_0)}^- = 1$, ie., $h(t_0) \in \partial W(a)$. From Lemma \ref{lem4.2}, we get
\begin{align*}
M(a) \geq \inf_{u \in \partial W(a)} F_{\mu}(u) = \inf_{u \in \Lambda^-(a)} F_{\mu}(u).
\end{align*}

\medbreak
\noindent (2) For any $a \in (0, a_*]$, it holds that
\begin{align*}
\inf_{u \in \Lambda^-(a)} F_{\mu}(u) \geq \inf_{u \in \Lambda^{-}_r(a)} F_{\mu}(u).
\end{align*}

For any $u \in \Lambda^-(a)$, let $u^*$ be the Schwartz rearrangement of $|u|$. By Lemma \ref{lem2.9}, we get that $\Psi_{u^*}(t) \leq \Psi_u(t)$ for all $t \geq 0$. Indeed,
since $t^{-}_u$ is the unique global maximum point for $\Psi_u$, we have
\begin{align*}
\Psi_u(t^{-}_u) \geq \Psi_u(t_{u^*}^-) \geq  \Psi_{u^*}(t_{u^*}^-).
\end{align*}
Since $u \in \Lambda^-(a)$, we have $t^{-}_{u}= 1$ and hence
\begin{align*}
F_{\mu}(u) = \Psi_u(1) = \Psi_u(t^{-}_u) \geq \Psi_{u^*}(t_{u^*}^-) = F_{\mu}({t_{u^*}^-}\star u^*).
\end{align*}
It follows from ${t_{u^*}^-}\star u^* \in\Lambda^{-}_r(a)$ that
\begin{align*}
\inf_{u \in \Lambda^-(a)} F_{\mu}(u) \geq \inf_{u \in \Lambda^-_r(a)} F_{\mu}(u).
\end{align*}

\medbreak
\noindent (3) For any $a \in (0, a_*]$, it results that
\begin{align*}
\inf_{u \in \Lambda^{-}_r(a)} F_{\mu}(u) \geq M^0(a).
\end{align*}

Let $u \in \Lambda^{-}_r(a)$ and $t_1 > 0$ be such that ${t_1}\star u \in E_a$. We consider the map
\begin{align*}
g_u : s \in [0, \infty) \mapsto ((1-s)t^+_u + st_1)\star u \in S_r(a).
\end{align*}
Then $g_u \in \mathit{C}([0,\infty), S_r(a))$ and
\begin{align*}
g_u(0) = {t^+_u}\star u \in \Lambda^+(a) \quad\mbox{and}\quad g_u(1) ={t_1}\star u \in E_a.
\end{align*}
Therefore, we get $g_u \in \Gamma^0(a)$ and
\begin{align*}
F_{\mu}(u) = \max\limits_{t >0} F_{\mu} (t\star u) \geq \max\limits_{s \in [0,\infty)} F_{\mu}(g_u(s)) \geq \inf_{g \in \Gamma^0(a)}  \max\limits_{s \in [0,\infty)} F_{\mu}(g(s)) = M^0(a).
\end{align*}
\end{proof}

\begin{Prop} \label{prop5}
Let $2<q<2+\frac{2}{N}$. For any $a \in (0,a_*]$, we have
\begin{align*}
M(a) < m(a) + \frac{\mathcal{S}^N}{2N}.
\end{align*}
\end{Prop}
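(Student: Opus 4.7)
\smallskip

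\noindent\textbf{Proof plan for Proposition 4.8.} My plan is to construct an explicit path $\gamma_\varepsilon \in \Gamma(a)$ whose maximum energy is strictly below the threshold $m(a)+\mathcal{S}^{N}/(2N)$. The building blocks are the ground state $u_a \in V(a)\cap\{F_\mu<0\}$ produced by Theorem \ref{th1.1} (achieving $m(a)$), together with the fractional Aubin--Talenti extremizer $U_\varepsilon$ for $\mathcal{S}$, concentrating at a point $x_0$ chosen far from the bulk of $u_a$, multiplied by a radial cutoff $\phi_R$ so that $\eta_\varepsilon := \phi_R\, U_\varepsilon(\cdot-x_0) \in H^{1/2}(\R^N)$. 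Standard computations (as in the cutoff estimates used in the Br\'ezis--Nirenberg setting, adapted to $\sqrt{-\Delta}$) give the sharp asymptotics $\|\eta_\varepsilon\|_{\dot H^{1/2}}^2 = \mathcal{S}^{N}+O(\varepsilon^{N-1})$, $\|\eta_\varepsilon\|_{2^*}^{2^*}=\mathcal{S}^{N}+O(\varepsilon^{N})$, $\|\eta_\varepsilon\|_2^2 = O(\varepsilon)$ and $\|\eta_\varepsilon\|_q^q=O(\varepsilon^{N-\frac{(N-1)q}{2}})$ (powers adapted to $\sigma=1/2$), together with cross-term controls $\int u_a \eta_\varepsilon = o_\varepsilon(1)$ and $\int u_a^{2^*-1}\eta_\varepsilon=o_\varepsilon(1)$ once $|x_0|$ is large enough compared with the concentration scale of $u_a$.

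\smallskip

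Next I introduce the two-parameter family $w_{R,\varepsilon} := u_a + R\,\eta_\varepsilon$ and project onto $S(a)$ by setting $\tilde w_{R,\varepsilon} := (a/\|w_{R,\varepsilon}\|_2)\, w_{R,\varepsilon}$; since $\|\eta_\varepsilon\|_2^2\to 0$, the rescaling factor is $1+O(\varepsilon)$. The candidate path is $\gamma_\varepsilon(t) := t\star \tilde w_{R,\varepsilon}$ for $t\in[0,\infty)$, parametrised to satisfy $\gamma_\varepsilon(t_0)\in V(a)\cap\{F_\mu<0\}$ at small $t_0$ (by the same argument used in Lemma \ref{lem3.3}(1)) and $F_\mu(\gamma_\varepsilon(t))<2m(a)$ for $t$ large (since $\Psi_{\tilde w_{R,\varepsilon}}(t)\to-\infty$ as $t\to\infty$ because $2^*/2>1$); after a harmless time reparametrisation this yields an element of $\Gamma(a)$. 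Using the asymptotic near-decoupling, the energy along the path splits as
\begin{equation*}
F_\mu(\gamma_\varepsilon(t)) = F_\mu(t\star u_a) + G_\varepsilon(t,R) + o_\varepsilon(1),
\end{equation*}
where $G_\varepsilon(t,R) := \frac{t}{2}R^2\|\eta_\varepsilon\|_{\dot H^{1/2}}^2 - \frac{t^{2^*/2}}{2^*}R^{2^*}\|\eta_\varepsilon\|_{2^*}^{2^*}$ is the purely critical fiber associated to the bubble, plus a strictly negative subcritical contribution $-\frac{\mu}{q}t^{q\gamma_q/2}\|\tilde w_{R,\varepsilon}\|_q^q$.

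\smallskip

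The max over $t>0$ of $G_\varepsilon(t,R)$ alone equals $\mathcal{S}^{N}/(2N) + o_\varepsilon(1)$, independent of $R$, while $\max_{t>0}F_\mu(t\star u_a) = F_\mu(u_a) = m(a)$ by Corollary \ref{cor2.1} and Lemma \ref{lem3.3}. The two maxima occur at very different values of $t$: the first is attained at $t_c\sim R^{-(2^*-2)/(2^*-1)}$ (arbitrarily large with $R$), whereas the ground state fiber attains its max at a fixed scale. Choosing $R$ large and then $\varepsilon$ small, at the location where $G_\varepsilon(t,R)$ peaks one has $F_\mu(t\star u_a) \leq m(a) + o_R(1)$, so that
\begin{equation*}
\max_{t>0} F_\mu(\gamma_\varepsilon(t)) \leq m(a) + \tfrac{\mathcal{S}^{N}}{2N} - \tfrac{\mu}{q}\big(t_c\big)^{q\gamma_q/2}\|\tilde w_{R,\varepsilon}\|_q^q + o_\varepsilon(1).
\end{equation*}
The subcritical correction is strictly negative and of order bounded below uniformly in $\varepsilon\to 0$ (because $\|u_a\|_q>0$), giving the desired strict inequality $M(a)\le \max_t F_\mu(\gamma_\varepsilon(t)) < m(a)+\mathcal{S}^{N}/(2N)$.

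\smallskip

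The main obstacle will be the interaction analysis: the fractional nature of $\sqrt{-\Delta}$ is nonlocal, so terms like $\int(u_a)\sqrt{-\Delta}\eta_\varepsilon$ and $\int |u_a+R\eta_\varepsilon|^{2^*}$ do not split cleanly and require careful estimates using the singular-integral representation of the $H^{1/2}$ norm together with the pointwise inequality $(a+b)^{2^*}\ge a^{2^*}+b^{2^*}+2^*(a^{2^*-1}b+ab^{2^*-1})$ to keep cross terms under control while exploiting $|x_0|$ large. A secondary technical point is checking that the max is actually attained \emph{before} the dilation sends $\gamma_\varepsilon$ into the purely critical regime; a monotonicity argument on $\Psi''$, analogous to Corollary \ref{cor4.2}, handles this. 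Once these ingredients are in place, Proposition \ref{prop3} follows by combining Proposition \ref{prop4} with the bound just established.
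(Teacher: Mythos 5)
There is a genuine gap, and the chosen approach will not deliver the refined threshold.

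The key error is the claim ``$\max_{t>0}F_\mu(t\star u_a)=F_\mu(u_a)=m(a)$.'' Since $u_a$ is a \emph{local minimizer} on $V(a)$, the fiber $\Psi_{u_a}(t)=F_\mu(t\star u_a)$ has a local minimum, not a maximum, at $t=1$; by Lemma \ref{lem3.3}(2) it attains its global maximum at $t_{u_a}^->1$ with $\Psi_{u_a}(t_{u_a}^-)\ge 0 > m(a)$. So the ground-state fiber does not contribute $m(a)$ to the sup over a dilation path. Moreover your own formula $t_c\sim R^{-(2^*-2)/(2^*-1)}$ gives $t_c\to 0$ as $R\to\infty$ (not ``arbitrarily large''), and as $t\to 0^+$ one has $\Psi_{u_a}(t)\to 0^->m(a)$. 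So at the scale where the bubble fiber peaks, the ground state part is $\approx 0$, not $\approx m(a)$; the decomposition gives a bound $\approx \mathcal{S}^N/(2N)$, which is strictly \emph{worse} than the required $m(a)+\mathcal{S}^N/(2N)$ because $m(a)<0$. The ``subcritical correction'' you invoke behaves like $t_c^{q\gamma_q/2}\to 0$, so it is not bounded away from zero.

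The structural obstruction is that a dilation path $t\star(u_a+R\eta_\varepsilon)$ rescales the ground state and the bubble \emph{together}. To capture the extra $m(a)$, the ground state must stay at its resting scale while the bubble passes through its critical level. The paper avoids dilation altogether: it first enlarges the admissible class to $\mathcal{G}(a)$ (Lemma \ref{lem4.5}), where paths can start from a ground state of mass $a_n\le a$, then uses the \emph{linear} path $\gamma_n(t)=u_{a_n}(\cdot-y_n)+tU_{\varepsilon_n}$ with $|y_n|$ large (Lemmas \ref{lem4.8}, \ref{lem4.10}). Along this path $u_{a_n}$ is frozen, so its contribution remains near $m(a_n)\approx m(a)$, controlled via the Lipschitz estimate $m(a_n)\le m(a)+d(a^2-a_n^2)$ of Corollary \ref{cor4.1}. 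The mass bookkeeping is done by choosing $a_n^2=a^2-2t_1^2\|U_{\varepsilon_n}\|_2^2$ so that $\|\gamma_n(t)\|_2\le a$ without any $L^2$-projection. The strict inequality then follows from comparing the $O(\varepsilon)$ (or $O(\varepsilon|\log\varepsilon|)$ if $N=2$) penalty from $\|U_\varepsilon\|_2^2$ against the gain $-\frac{\mu t_0^q}{q}\|U_\varepsilon\|_q^q\sim -\varepsilon^{N-\frac{(N-1)q}{2}}$, which dominates precisely because $2<q<2+\frac{2}{N}$. Your proposal misses the translation-and-small-mass mechanism and uses an incorrect evaluation of the ground-state fiber, so it cannot be repaired within the dilation-path framework.
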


We introduce some necessary Lemmas.
\begin{Lem} \label{lem4.5}
For any $a \in (0,a_*]$, it holds that
$$M(a) \leq \inf_{h \in \mathcal{G}(a)} \max\limits_{t \in [0,1]} F_{\mu}(h(t)),$$
where
\begin{align*}
\mathcal{G}(a) := \Big\{h \in \mathit{C}([0,\infty), \displaystyle \cup_{d \in [\frac{a}{2},a]} S(d)) &: h(0) \in \mathcal{M}_d \, \text{ for some } \, d \in [\frac{a}{2},a], \\
&\exists t_0 = t_0(h) \ \mbox{ such that}\ h(t) \in E_a, \, \forall t \geq t_0 \Big\}.
\end{align*}
\end{Lem}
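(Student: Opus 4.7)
The plan is, for each $h \in \mathcal{G}(a)$, to construct a path $\gamma \in \Gamma(a)$ satisfying
\[
\max_{t \geq 0} F_\mu(\gamma(t)) \leq \max_{t \in [0,1]} F_\mu(h(t)),
\]
from which the claim follows by taking infima. The central device is a mass-renormalizing rescaling $\Phi \colon \bigcup_{d \in [a/2,a]} S(d) \to S(a)$ that is $F_\mu$-non-increasing and coincides with the identity on $S(a)$.

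For $v$ with $d := \|v\|_2 \in [a/2,a]$, I set
\[
\Phi(v)(x) := (d/a)^{N-1}\, v\bigl((d/a)^{2}x\bigr).
\]
The scaling $v \mapsto \lambda v(\mu\,\cdot)$ multiplies $\|\cdot\|_{\dot H^{1/2}}^{2}$ by $\lambda^{2}\mu^{1-N}$ (from the Fourier representation) and $\|\cdot\|_{p}^{p}$ by $\lambda^{p}\mu^{-N}$. Substituting $\lambda=(d/a)^{N-1}$, $\mu=(d/a)^{2}$ yields $\|\Phi(v)\|_{2}=a$, $\|\Phi(v)\|_{\dot H^{1/2}}=\|v\|_{\dot H^{1/2}}$, $\|\Phi(v)\|_{2^*}^{2^*}=\|v\|_{2^*}^{2^*}$, and
\[
\|\Phi(v)\|_{q}^{q} = (d/a)^{(N-1)q - 2N}\, \|v\|_{q}^{q} \geq \|v\|_{q}^{q},
\]
the last inequality using $(N-1)q - 2N < 0$ for $q\in(2,2^*)$ and $d/a \leq 1$. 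Hence $F_\mu(\Phi(v)) \leq F_\mu(v)$, with equality exactly when $d=a$ (in which case $\Phi(v)=v$). The pair of exponents $N-1$ (amplitude) and $2$ (dilation) is essentially the unique one preserving both $\|\cdot\|_{\dot H^{1/2}}$ and $\|\cdot\|_{2^*}$, reflecting the conformal alignment between the half-Laplacian and its Sobolev-critical exponent $2^*=2N/(N-1)$.

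Given $h \in \mathcal{G}(a)$, reparametrized so that $t_h = 1$ (i.e., $h(t) \in E_a$ for every $t \geq 1$), I set $\gamma := \Phi \circ h$. Continuity of $\Phi$ in the $H^{1/2}$-topology---since $d=\|v\|_2$ depends continuously on $v$---gives $\gamma \in C([0,\infty), S(a))$. At $t=0$, $\gamma(0)=\Phi(u_{d_0})$ for some $u_{d_0}\in \mathcal{M}_{d_0}$: norm-preservation gives $\|\gamma(0)\|_{\dot H^{1/2}} = \|u_{d_0}\|_{\dot H^{1/2}} < \rho_0$, and $F_\mu(\gamma(0)) \leq m(d_0) < 0$, so $\gamma(0) \in V(a) \cap \{F_\mu<0\}$. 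For $t\geq 1$, $h(t) \in E_a \subset S(a)$ forces $d(t)=a$ and thus $\gamma(t)=h(t)\in E_a$. Hence $\gamma \in \Gamma(a)$. Since $F_\mu(h(t)) < 2m(a) < 0 \leq M^0(a)$ for $t \geq 1$, the maximum of $F_\mu\circ h$ is attained in $[0,1]$, so combining with the pointwise bound $F_\mu(\gamma(t)) \leq F_\mu(h(t))$ yields
\[
M(a) \leq \max_{t \geq 0} F_\mu(\gamma(t)) \leq \max_{t \in [0,1]} F_\mu(h(t)),
\]
as desired. The technical core is the careful scaling bookkeeping for the nonlocal $\dot H^{1/2}$-norm and the identification of the unique exponents that make $\Phi$ preserve both critical Sobolev quantities, while strictly enlarging the subcritical $L^q$-norm; this strict enlargement is precisely what drives $F_\mu(\Phi(v)) \leq F_\mu(v)$.
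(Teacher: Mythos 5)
Your proof is correct and is essentially the same as the paper's. Writing $\theta(t)=\|h(t)\|_2^2/a^2$, the paper defines $g(t)(x)=\theta(t)^{(N-1)/2}h(t)(\theta(t)x)$, which is literally your map $\Phi$ applied to $h(t)$ since $\theta=(d/a)^2$ and $\theta^{(N-1)/2}=(d/a)^{N-1}$; the verification that $\|\cdot\|_2$ is normalized to $a$, that $\|\cdot\|_{\dot H^{1/2}}$ and $\|\cdot\|_{2^*}^{2^*}$ are preserved, and that $\|\cdot\|_q^q$ is not decreased because $(N-1)q/2-N<0$ and $\theta\le1$, is identical in both.
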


\begin{proof}
For any $h \in \mathcal{G}(a)$. Setting
\begin{align*}
t \mapsto \theta(t) :=\frac{\|h(t)\|^2_{L^2}}{a^2}.
\end{align*}
Then we easily see that $\theta$ is the continuous function and $\theta(t) \leq 1$ for all $t$. Let
\begin{align*}
g(t)(x) := \theta(t)^{\frac{N-1}{2}} h(t)(\theta(t)x).
\end{align*}
It holds that
\begin{align*}
\|g(t)\|_{2}^2 &= \frac{1}{\theta(t)} \|h(t)\|_{2}^2 = a^2, &\|g(t)\|_{\dot{H}^{1/2}}^2 &= \|h(t)\|_{\dot{H}^{1/2}}^2, \\
\|g(t)\|_{q}^{q} &= [\theta(t)]^{(\frac{(N-1)q}{2}-N)} \|h(t)\|_{q}^{q}, &\|g(t)\|_{2^*}^{2^*} &= \|h(t)\|_{2^*}^{2^*}.
\end{align*}
Therefore,
\begin{align*}
F_{\mu} (g(t)) &= \frac{1}{2} \|g(t)\|_{\dot{H}^{1/2}}^2 - \frac{\mu}{q} \|g(t)\|_{q}^{q} - \frac{1}{2^*}\|g(t)\|_{2^*}^{2^*} \\
&= \frac{1}{2} \|h(t)\|_{\dot{H}^{1/2}}^2 - \frac{\mu}{q} [\theta(t)]^{(\frac{(N-1)q}{2}-N)} \|h(t)\|_{q}^{q} - \frac{1}{2^*}\|h(t)\|_{2^*}^{2^*}\\
&\leq \frac{1}{2} \|h(t)\|_{\dot{H}^{1/2}}^2 - \frac{\mu}{q} \|h(t)\|_{q}^{q}- \frac{1}{2^*}\|h(t)\|_{2^*}^{2^*} = F_{\mu} (h(t)),
\end{align*}
due to $\theta(t) \leq 1$ for all $t$ and $\frac{(N-1)q}{2}-N < 0$.
Since $F_{\mu}(g(0)) \leq F_{\mu}(h(0)) <0$ and $\|g(0)\|_{\dot{H}^{1/2}} = \|h(0)\|_{\dot{H}^{1/2}} < \rho_0$, we have $g(0) \in V(a) \cap \{u : F_{\mu}(u) < 0\}$ and hence that  $g \in \Gamma(a)$. This implies the Proposition.
\end{proof}

By Lemma \ref{lem2.1}, we know that $\mathcal{S}=\displaystyle\inf_{u\in\dot{H}^{1/2}}\frac{\|u\|^2_{\dot{H}^{1/2}}}{\|u\|^{2}_{2^*}}$ is attained by
\begin{align*}
u_{\varepsilon}(x) :=C(N) \frac{\varepsilon^{\frac{N-1}{2}}}{(\varepsilon^2+|x|^2)^{\frac{N-1}{2}}}, \quad \varepsilon > 0, \quad x\in\R^N,
\end{align*}
where $C(N)$ is a positive constant. Take a radially decreasing cut-off function $\xi \in C_0^{\infty}(\R^N)$ such that $\xi \equiv 1$ in $B_1$, $\xi \equiv 0$ in $\R^N \backslash B_2$, and let $U_{\varepsilon}(x) = \xi(x) u_{\varepsilon}(x)$. 
Similar to the proof of Propositions 14.11--14.12 of \cite{GvRR}, we can deduce that
\begin{equation*}
||U_{\varepsilon}||_{\dot{H}^{1/2}}^2 \!\leq \! {\mathcal{S}}
^{N}\!+\!O(\varepsilon^{N-1}),\qquad ||U_{\varepsilon}||_{2^*}^{2^*}\!=\!{\mathcal{S}}
^N\!+\!O(\varepsilon^{N}),~~~~
\end{equation*}
and
\begin{equation*}
||U_{\varepsilon}||_{2}^2=\left\{\begin{array}{ll}{O(\varepsilon)} & {\text { if } N >2}, \\
 {O(\varepsilon|\log{\varepsilon}|) } & {\text { if } N\!=2}.\end{array}\right.~~~~
\end{equation*}

Let $u \in H^{1/2}(\R^N)$ be a nonnegative function. For any
$\varepsilon >0$  and  $ t > 0$, we have
\begin{equation}\label{e11}
\begin{aligned}
F_{\mu} (u + t U_{\varepsilon} ) & \leq  F_{\mu}(u)+t \int_{\R^N}\int_{\R^N}\frac{(u(x)-u(y))(U_\varepsilon(x)-U_\varepsilon(y))}{|x-y|^{N+1}}
	+\frac{t^2}{2} \|U_{\varepsilon}\|_{\dot{H}^{1/2}}^2 \\
&\qquad- \frac{\mu t^{q}}{q} \|U_{\varepsilon}\|_{q}^{q}- \frac{t^{2^*}}{2^*} \|U_{\varepsilon}\|_{2^*}^{2^*}.
\end{aligned}
\end{equation}
In the following, we fix a sequence  $\{\varepsilon_n\} \subset \R^+ $ such that $\varepsilon_n \to 0$.
Set
\begin{equation*}
\begin{aligned}
I_n(t)&:= t \int_{\R^N}\int_{\R^N}\frac{(u_n(x)-u_n(y))(U_{\varepsilon_n}(x)-U_{\varepsilon_n}(y))}{|x-y|^{N+1}}
+\frac{t^2}{2} \|U_{\varepsilon_n}\|_{\dot{H}^{1/2}}^2-\frac{\mu t^{q}}{q} \|U_{\varepsilon_n}\|_{q}^{q}
- \frac{t^{2^*}}{2^*} \|U_{\varepsilon_n}\|_{2^*}^{2^*},
\end{aligned}
\end{equation*}
where $\{u_n\}\subset H^{1/2}(\R^N)$ satisfies
\begin{equation*}
\int_{\R^N}\int_{\R^N}\frac{(u_n(x)-u_n(y))(U_{\varepsilon_n}(x)-U_{\varepsilon_n}(y))}{|x-y|^{N+1}}  \leq 1, \quad \forall n \in N.
\end{equation*}
Then we assert that there exists $t_1> 0$ such that, for any $n \in N$ large enough,
\begin{equation}\label{e1}
I_n(t) \leq 2 m(a)\ \ \text{for\ any}\ t \geq t_1.
\end{equation}

\begin{Lem}\label{lem4.8}
Let $a \in (0,a_*]$ and $u_a\in \mathcal{M}_a=\big\{u\in V(a) \big| F_{\mu}(u)=m(a)\big\}$. For any $\varepsilon >0$, there exists a $y_{\varepsilon} \in \R^N$ such that
\begin{align}\label{b2}
2 \int_{\R^N}u_a( x - y_{\varepsilon}) U_{\varepsilon}(x) \leq t_1  \|U_{\varepsilon}\|_{2}^2,
\end{align}
where $t_1$ is given by \eqref{e1} and
\begin{align}\label{b3}
\int_{\R^N}\int_{\R^N}\frac{(u(x-y_\varepsilon)-u(y-y_\varepsilon))(U_\varepsilon(x)-U_\varepsilon(y))}{|x-y|^{N+1}}\leq  \|U_{\varepsilon}\|_{2}^2.
\end{align}
\end{Lem}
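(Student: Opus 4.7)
Fix $\varepsilon>0$. Since $t_1>0$ and $\|U_\varepsilon\|_2^2>0$ are positive constants depending only on $\varepsilon$, it suffices to show that both sides \eqref{b2} and \eqref{b3} of the claimed inequalities have left-hand sides which tend to $0$ as $|y_\varepsilon|\to\infty$; then any $y_\varepsilon$ of sufficiently large norm will do. The whole argument is therefore a translation-at-infinity calculation exploiting that $U_\varepsilon$ has compact support in $B_2$ while $u_a\in H^{1/2}(\R^N)\cap L^2(\R^N)$ decays at infinity in the $L^2$ sense.

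For \eqref{b2}, using $\mathrm{supp}\, U_\varepsilon\subset B_2$ and the Cauchy--Schwarz inequality,
\begin{equation*}
\int_{\R^N}u_a(x-y_\varepsilon)U_\varepsilon(x)\,dx
\le \|u_a\|_{L^2(B_2-y_\varepsilon)}\|U_\varepsilon\|_2,
\end{equation*}
and since $u_a\in L^2(\R^N)$, the factor $\|u_a\|_{L^2(B_2-y_\varepsilon)}\to 0$ as $|y_\varepsilon|\to\infty$. This handles \eqref{b2} immediately.

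For \eqref{b3}, by the translation invariance of the double integral (change variables $x\mapsto x+y_\varepsilon$, $y\mapsto y+y_\varepsilon$), the left-hand side equals
\begin{equation*}
\int_{\R^N}\!\int_{\R^N}\frac{(u_a(x)-u_a(z))(U_\varepsilon^{y_\varepsilon}(x)-U_\varepsilon^{y_\varepsilon}(z))}{|x-z|^{N+1}}\,dx\,dz,
\qquad U_\varepsilon^{y_\varepsilon}(x):=U_\varepsilon(x+y_\varepsilon),
\end{equation*}
so it suffices to show $\langle u_a,U_\varepsilon^{y_\varepsilon}\rangle_{\dot H^{1/2}}\to 0$. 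I would argue by density. Given $\eta>0$, pick $\phi\in C_c^\infty(\R^N)$ with $\mathrm{supp}\,\phi\subset B_R$ and $\|u_a-\phi\|_{\dot H^{1/2}}<\eta$; then the error term satisfies $|\langle u_a-\phi,U_\varepsilon^{y_\varepsilon}\rangle_{\dot H^{1/2}}|\le \eta\|U_\varepsilon\|_{\dot H^{1/2}}$ uniformly in $y_\varepsilon$. Once $|y_\varepsilon|>R+2$ the supports of $\phi$ and $U_\varepsilon^{y_\varepsilon}$ are disjoint, so the cross terms cancel and
\begin{equation*}
\langle \phi,U_\varepsilon^{y_\varepsilon}\rangle_{\dot H^{1/2}}=-2\int_{B_R}\!\int_{B_2-y_\varepsilon}\frac{\phi(x)U_\varepsilon^{y_\varepsilon}(z)}{|x-z|^{N+1}}\,dx\,dz,
\end{equation*}
which is bounded by $(|y_\varepsilon|-R-2)^{-(N+1)}\|\phi\|_{L^1}\|U_\varepsilon\|_{L^1}\to 0$ as $|y_\varepsilon|\to\infty$. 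Letting $|y_\varepsilon|\to\infty$ and then $\eta\to 0$ proves the claim.

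The main technical obstacle is the nonlocal nature of the $\dot H^{1/2}$ inner product: unlike in the local case, disjoint supports do not immediately annihilate the bilinear form, so one must quantify the long-range tail. The rapid polynomial decay in the separation distance $|y_\varepsilon|-R-2$ is what makes the argument work, combined with a density step to reduce from $u_a\in\dot H^{1/2}$ to a compactly supported approximant.
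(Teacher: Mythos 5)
Your proof is correct, and it takes a genuinely different route from the paper on the second inequality \eqref{b3}. The paper's argument exploits the fact that $u_a$ is a (nonnegative, radially decreasing) solution of the Euler--Lagrange equation with $\lambda_a<0$: pairing the equation for $u_a(\cdot-y)$ against the test function $U_\varepsilon$, the $\lambda_a$ term has a favourable sign and can be dropped, which reduces the $\dot H^{1/2}$ pairing to the potential integrals $\mu\int|u_a(\cdot-y)|^{q-1}U_\varepsilon+\int|u_a(\cdot-y)|^{2^*-1}U_\varepsilon$; these are then controlled by the Radial Lemma's pointwise decay $|u_a(z)|\le Ca|z|^{-N/2}$, which also handles \eqref{b2}. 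Your approach instead treats \eqref{b2} by Cauchy--Schwarz and $L^2$-tail smallness of $u_a$, and \eqref{b3} by density of $C_c^\infty$ in $\dot H^{1/2}$ together with the explicit decay of the nonlocal bilinear form when supports are separated. Your argument is more elementary and more general: it only uses $u_a\in H^{1/2}(\R^N)$ and never invokes the equation, the sign of $\lambda_a$, or the radial structure of $u_a$. The paper's argument, by contrast, is shorter once the radial lemma and the Pohozaev/Lagrange framework are in place, and it stays entirely within the pointwise/potential-theoretic estimates already used elsewhere in Section~4, avoiding the density-plus-limsup bookkeeping. One small bookkeeping remark on your write-up: when you pass to the limit you should make explicit that $\limsup_{|y_\varepsilon|\to\infty}|\langle u_a,U_\varepsilon^{y_\varepsilon}\rangle_{\dot H^{1/2}}|\le\eta\|U_\varepsilon\|_{\dot H^{1/2}}$ for each fixed $\eta$ (with $R=R(\eta)$), and only then let $\eta\to 0$; the phrasing ``letting $|y_\varepsilon|\to\infty$ and then $\eta\to 0$'' is the right order, since the separation bound degenerates as $R\to\infty$.
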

\begin{proof}
Let $u_a \in \mathcal{M}_a$ be a radial, non-increasing function, by Radial Lemma A.IV of \cite{BL}, we have
\begin{equation}\label{b4}
|u_a(z)| \leq C|z|^{-\frac{N}{2}} \|u_a\|_2=Ca|z|^{-\frac{N}{2}}, \quad \forall |z| \geq 1.
\end{equation}
Then for $|y|$ large enough, we get
\begin{equation}\label{abb1}
\begin{aligned}
\int_{\R^N}u_a( x - y) U_{\varepsilon}(x)&\leq C a \int_{\R^N}  \frac{U_{\varepsilon}(x)}{|x-y|^{N/2}}\\
&
\le Ca \int_{B_2} \frac{U_{\varepsilon}(x)} {\big||y|-R \big|^{N/2}} \leq Ca \big|\frac{y}{2}\big|^{-\frac{N}{2}}\int_{B_2}  U_{\varepsilon}(x).
\end{aligned}
\end{equation}
Here, we use the fact that $U_{\varepsilon}$ has compactly supported in $B_2$. Thus, it follows from \eqref{abb1} that
\begin{align*}
\int_{\R^N}u_a(x-y)U_{\varepsilon}(x) \leq C \sqrt{a} \big|\frac{2}{y}\big|^{\frac{2}{N}} \int_{B_2}U_{\varepsilon}(x)\le C\sqrt{a} \big|\frac{2}{y}\big|^{\frac{2}{N}} |B_2| \|U_{\varepsilon}\|^2_2.
\end{align*}
Then letting $|y|$ be large enough such that $C\sqrt{a} \big|\frac{2}{y}\big|^{\frac{2}{N}} |B_2| <t_1$, we prove that \eqref{b2} holds. For any $y \in \R^N$, $u_a(\cdot-y) \geq0 $ is solution to the equation
\begin{align*}
\sqrt{-\Delta} u - \mu |u|^{q-2} u - |u|^{2^*-2} u = \lambda_a u \quad \text{in}\ \R^N,
\end{align*}
for some $\lambda_a<0$, we have that
\begin{align*}
\begin{split}
\int_{\R^N}\int_{\R^N} \frac{(u_a(x -y)-u_a(z-y))(U_{\varepsilon}(x)-U_{\varepsilon}(z))}{|x-z|^{N+1}}  &\leq
 \mu \int_{\R^N} |u_a(x-y)|^{q-1} U_{\varepsilon}(x) \\
&\qquad+ \int_{\R^N}|u_a(x -y)|^{2^*-1}U_{\varepsilon}(x).
\end{split}
\end{align*}
From \eqref{b4}, we see that $|u_a(z)|^{q-1} \leq |u_a(z)|$ and $|u_a(z)|^{2^*-1} \leq |u_a(z)|$ for $|z|$ large. Similar arguments apply to the case \eqref{b3}.	
\end{proof}

The sequence $\{a_n\} \in \displaystyle \big[\frac{a}{2}, a\big)$ is defined as
\begin{equation}\label{definitioncn}
a^2_n := a^2- 2 t_1^2 \|U_{\varepsilon_n}\|_{2}^2,
\end{equation}
where $t_1>0$ is given in \eqref{e1}. Clearly, $a_n \to a$ as $n \to \infty$. For each $n \in N $, we fix a $u_{a_n} \in \mathcal{M}_{a_n}$.


\begin{Lem}\label{lem4.10}
Under the setting introduced above,  we denote by
\begin{align*}
\gamma_n(t) =
\begin{cases}
\displaystyle u_{a_n}(\cdot - y_n) + t U_{\varepsilon_n} \quad \text{if} \quad t \in [0,t_1],\\
\displaystyle \gamma_n(t_1) \quad \text{if} \quad t \geq t_1,
\end{cases}
\end{align*}
where $|y_n|\to \infty$ as $n\to \infty$. Then we have $\gamma_n \in \mathcal{G}(a)$ and
\begin{equation}\label{key}
M(a) \leq \max\limits_{t \in [0, \infty)} F_{\mu}(\gamma_n(t)) \leq \max\limits \Big\{ \max\limits_{t \in [t_0,t_1]}F_{\mu}(\gamma_n(t)), \,
m(a) + \frac{1}{3N}\mathcal{S}^{N} \Big\},
\end{equation}
for any $n \in N$ large enough, where $t_0>0$.
\end{Lem}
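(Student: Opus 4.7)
The plan splits naturally into two blocks: first, showing $\gamma_n \in \mathcal{G}(a)$, and second, decomposing the time interval into three pieces in order to estimate $\max_{t\in[0,\infty)}F_\mu(\gamma_n(t))$.

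To verify $\gamma_n \in \mathcal{G}(a)$, I would proceed as follows. By translation invariance of $F_\mu$ and $\|\cdot\|_2$, the shifted function $u_{a_n}(\cdot - y_n)$ still belongs to $\mathcal{M}_{a_n}$. The definition \eqref{definitioncn} of $a_n$ together with $\|U_{\varepsilon_n}\|_2 \to 0$ guarantees $a_n \in [a/2, a)$ for all $n$ large. The map $t\mapsto \gamma_n(t)$ is continuous in $H^{1/2}(\R^N)$ by construction. To check $\|\gamma_n(t)\|_2 \in [a/2, a]$ for every $t$, expand
\begin{equation*}
\|\gamma_n(t)\|_2^2 = a_n^2 + 2t\int_{\R^N} u_{a_n}(x-y_n)U_{\varepsilon_n}(x) + t^2\|U_{\varepsilon_n}\|_2^2.
\end{equation*}
Positivity of $u_{a_n}$ and $U_{\varepsilon_n}$ yields the lower bound $\|\gamma_n(t)\|_2^2 \ge a_n^2 \ge a^2/4$, while estimate \eqref{b2} from Lemma \ref{lem4.8} (applied with $u_{a_n}$ in place of $u_a$ and $y_n = y_{\varepsilon_n}$) together with the bound $t \le t_1$ gives $\|\gamma_n(t)\|_2^2 \le a_n^2 + 2t_1^2 \|U_{\varepsilon_n}\|_2^2 = a^2$. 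Finally, to see that $\gamma_n(t)\in E_a$ for all $t\ge t_1$, note the path is constant on $[t_1,\infty)$, and combine \eqref{e11} with \eqref{e1}, plus $F_\mu(u_{a_n}(\cdot - y_n)) = m(a_n)$, to obtain $F_\mu(\gamma_n(t_1)) \le m(a_n) + 2m(a)$. Since $m(a) < 0$ and $m(a_n) \to m(a)$ by Lemma \ref{lem3.4}(1), this right-hand side is strictly less than $2m(a)$ for $n$ large.

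For the main inequality, I would use Lemma \ref{lem4.5}, which gives $M(a)\le\max_{t\ge 0}F_\mu(\gamma_n(t))$, and then split $[0,\infty) = [0,t_0]\cup[t_0,t_1]\cup[t_1,\infty)$. On the last interval $\gamma_n$ is constant and the previous paragraph already supplies the bound $F_\mu(\gamma_n(t)) < 2m(a) < m(a) + \mathcal{S}^N/(3N)$, where the second inequality is immediate from $m(a)<0$. On $[t_0, t_1]$ the contribution is exactly $\max_{t\in[t_0,t_1]} F_\mu(\gamma_n(t))$, which appears on the right-hand side of \eqref{key}. For the interval $[0,t_0]$ the strategy is to exploit continuity: $F_\mu(\gamma_n(0)) = m(a_n) \to m(a)$, and using the explicit formula
\begin{equation*}
F_\mu(\gamma_n(t)) - F_\mu(\gamma_n(0)) \le I_n(t) + o_n(1),
\end{equation*}
together with the uniform-in-$n$ bounds on $\|U_{\varepsilon_n}\|_{\dot H^{1/2}}^2$, $\|U_{\varepsilon_n}\|_q^q$, $\|U_{\varepsilon_n}\|_{2^*}^{2^*}$ recalled before Lemma \ref{lem4.8}, the cross term controlled by \eqref{b3}, I would choose $t_0>0$ sufficiently small (independent of $n$) so that $F_\mu(\gamma_n(t)) \le m(a) + \mathcal{S}^N/(3N)$ for all $t\in[0,t_0]$.

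The main obstacle is ensuring all these estimates are uniform in $n$ as $\varepsilon_n\to 0$. In particular, one must carefully check that the choice of $y_n$ in Lemma \ref{lem4.8}—made for a fixed element $u_a$—can be adapted to the $n$-dependent ground states $u_{a_n}$; this requires using the uniform decay $|u_{a_n}(z)|\le Ca|z|^{-N/2}$ coming from the radial lemma applied to a uniformly $L^2$-bounded family, so that a single $y_n$ with $|y_n|$ sufficiently large depending on $\varepsilon_n$ simultaneously yields \eqref{b2} and \eqref{b3}.
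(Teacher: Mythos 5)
Your proposal is correct and follows essentially the same route as the paper: verify $\gamma_n\in\mathcal{G}(a)$ via continuity, the $L^2$-norm estimate using \eqref{b2}, and the $E_a$ condition from \eqref{e1}, then invoke Lemma~\ref{lem4.5} and split $[0,\infty)$ into $[0,t_0]$, $[t_0,t_1]$, $[t_1,\infty)$ with $t_0$ chosen small enough that $I_n(t)<\tfrac{1}{4N}\mathcal{S}^N$ there. Your closing remark about needing the radial-lemma decay estimate to hold uniformly over the family $\{u_{a_n}\}\subset\mathcal{M}_{a_n}$ (rather than a single $u_a$) is a genuine subtlety that the paper glosses over and that you handle correctly.
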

\begin{proof}
Let $n \in N$ be arbitrary but fixed. 
Clearly, $\gamma_n \in C([0, \infty), H^{1/2})$, by Lemma \ref{lem4.8}, we have
\begin{equation*}
\begin{aligned}
\|u_{a_n}(\cdot - y_{n}) + t U_{\varepsilon_n}\|_{2}^2&=\|u_{a_n}(\cdot - y_{n})\|^2_2+2t\int_{\R^N}u_{a_n}(x-y_n)U_{\varepsilon_n}(x)+t^2\|U_{\varepsilon_n}\|^2_2\\
&\le a^2_n+2t_1 \int_{\R^N}u_{a_n}(x-y_n)U_{\varepsilon_n}(x)+t^2_1\|U_{\varepsilon_n}\|^2_2\\
&\le a^2_n+2t^2_1\|U_{\varepsilon_n}\|^2_2=a^2,
\end{aligned}
\end{equation*}
and hence
$$ \gamma_n(t) \subset \bigcup_{d \in [\frac{a}{2}, a]}S(d)$$
since $a_n \to a$ as $n\to \infty$. From \eqref{e11},
\begin{equation}\label{c3}
\begin{aligned}
F_{\mu} (u_{a_n} + t U_{\varepsilon_n} ) & \leq  m(a_n)+t \int_{\R^N}\int_{\R^N}\frac{(u_{a_n}(x)-u_{a_n}(y))(U_{\varepsilon_n}(x)-U_{\varepsilon_n}(y))}{|x-y|^{N+1}}\\
&\qquad	+\frac{t^2}{2} \|U_{\varepsilon_n}\|_{\dot{H}^{1/2}}^2
- \frac{\mu t^{q}}{q} \|U_{\varepsilon_n}\|_{q}^{q}- \frac{t^{2^*}}{2^*} \|U_{\varepsilon_n}\|_{2^*}^{2^*}.
\end{aligned}
\end{equation}
Since $\|U_{\varepsilon_n}\|^2_2\to 0$ as $n\to \infty$,
\begin{equation}\label{c2}
\int_{\R^N}\int_{\R^N}\frac{(u_{a_n}(x-y_{n})-u_{a_n}(z-y_{n})) (U_{\varepsilon_n}(x)-U_{\varepsilon_n}(z) )}{|x-y|^{N+1}} \leq \max\limits \{1, \|U_{\varepsilon_n}\|_{2}^2 \}.
\end{equation}
We can apply \eqref{e1} to deduce that, for $t \geq t_1$,
\begin{equation}\label{star2}
F_{\mu}(\gamma_n(t)) \leq m(a_n) + 2 m(a) \leq 2 m(a).
\end{equation}
We conclude that $\gamma_n \in \mathcal{G}(a)$ for any $n \in N$ large enough.

In particular the first inequality in \eqref{key} holds because of Lemma \ref{lem4.5}. Since $t\to 0$, $I_n(t)<\frac{1}{4N}\mathcal{S}^{N}$. There exists $t_0>0$ such that if $I_n(t) \geq \frac{1}{4N}\mathcal{S}^N$ then necessarily $t \geq t_0$. Now, considering again \eqref{c3} and recording that \eqref{c2} apply, if $t \in [0,t_0]$, we get
\begin{equation}\label{star3}
F_{\mu}(\gamma_n(t)) \leq m(a_n) + \frac{1}{4N} \mathcal{S}^{N} \leq m(a) + \frac {1}{3N} \mathcal{S}^{N}
\end{equation}
since $a_n \to a$ as $n \to \infty$. Combining \eqref{star2} and \eqref{star3} we see that the second inequality in \eqref{key} holds.

\end{proof}

\begin{proof}[Proof of Proposition \ref{prop5}]
We assume the setting above. In order to prove
$$M(a) < m(a) + \frac{1}{2N}\mathcal{S}^{N},$$
by Lemma \ref{lem4.10}, it suffices to show that
$$
\max\limits_{t \in [t_0,t_1]}F_{\mu} (u_{a_n}(\cdot- y_n) + t U_{\varepsilon_n}) < m(a) + \frac{1}{2N}\mathcal{S}^{N}.
$$
From Corollary \ref{cor4.1}, Lemma \ref{lem4.8} and the definition of $a_n$ given in \eqref{definitioncn}  we can write
\begin{align*}
\begin{split}
\max\limits_{t \in [t_0,t_1]}&F_{\mu} (u_{a_n}(\cdot -y_n) + t U_{\varepsilon_n} )  \\
\leq \, &m(a_n) + \max\limits_{t \in [t_0,t_1]} [t \|U_{\varepsilon_n}\|_{2}^2 + \frac{t^2}{2} \|U_{\varepsilon_n}\|_{\dot{H}^{1/2}}^2 - \frac{\mu t^{q}}{q} \|U_{\varepsilon_n}\|_{q}^{q}-\frac{t^{2^*}}{2^*} \| U_{\varepsilon_n}\|_{2^*}^{2^*}]\\
\leq \, &m(a) + d(a^2  - a^2_n) + t_1 \|U_{\varepsilon_n}\|_{2}^2
	+ \max\limits_{t \in [t_0,t_1]} [\frac{t^2}{2} \|U_{\varepsilon_n}\|_{\dot{H}^{1/2}}^2  - \frac{\mu t^{q}}{q} \| U_{\varepsilon}\|_{q}^{q}-\frac{t^{2^*}}{2^*} \| U_{\varepsilon_n}\|_{2^*}^{2^*} ] \\
\leq \, &m(a) +   2t_1^2 d  \|U_{\varepsilon_n}\|_{\dot{H}^{1/2}}^2   + t_1 \|U_{\varepsilon_n}\|_{2}^2
+ \max\limits_{t \in [t_0,t_1]} [\frac{t^2}{2} \|U_{\varepsilon_n}\|_{\dot{H}^{1/2}}^2  - \frac{\mu t^{q}}{q} \| U_{\varepsilon_n}\|_{q}^{q} - \frac{t^{2^*}}{2^*} \| U_{\varepsilon_n}\|_{2^*}^{2^*}].
\end{split}
\end{align*}
It remains to prove that, for $n$ sufficiently large,
\begin{align*}
J_n:=  (2t_1^2 d + t_1)  \|U_{\varepsilon_n}\|_{2}^2 + \max\limits_{t \in [t_0,t_1]} [\frac{t^2}{2} \|U_{\varepsilon_n}\|_{\dot{H}^{1/2}}^2 - \frac{\mu t^{q}}{q} \|U_{\varepsilon_n}\|_{q}^{q} - \frac{t^{2^*}}{2^*} \|U_{\varepsilon_n}\|_{2^*}^{2^*}]
< \frac{1}{2N}\mathcal{S}^{N}.
\end{align*}
We proceed to show that
$$J_n \leq (2t_1^2 d + t_1)  \|U_{\varepsilon_n}\|_{2}^2 + \max\limits_{t >0} [\frac{t^2}{2}\|U_{\varepsilon_n}\|_{\dot{H}^{1/2}}^2 - \frac{t^{2^*}}{2^*} \|U_{\varepsilon_n}\|_{2^*}^{2^*}] -\frac{\mu t_0^{q}}{q} \|U_{\varepsilon_n}\|_{q}^{q}.$$
It is well know that	
\begin{equation*}\label{c4}
\max\limits_{t >0} \big[\dfrac{t^2}{2} \|U_{\varepsilon_n}\|_{\dot{H}^{1/2}}^2 - \frac{t^{2^*}}{2^*} \|U_{\varepsilon_n}\|_{2^*}^{2^*}\big] =
\frac{1}{2N}\mathcal{S}^{N} + O(\varepsilon_n).
\end{equation*}

To complete the proof of Proposition \ref{prop5}, we only need to prove that, for $n$ large enough,
\begin{equation*}\label{SecondX2}
(2t_1^2 d + t_1)  \|U_{\varepsilon_n}\|_{2}^2  - \frac{\mu t_0^{q}}{q} \|U_{\varepsilon_n}\|_{q}^{q}
+  O(\varepsilon_n)<0.
\end{equation*}
From Lemma 7.1 of \cite{JL}, we have
\begin{equation*}
\|U_{\varepsilon_n}\|_{q}^{q}=C\varepsilon_n^{N-\frac{(N-1)q}{2}}+o\big(\varepsilon_n^{N-\frac{(N-1)q}{2}}\big).
\end{equation*}
If $N>2$, there exist constants $K_1 >0$ and $K_2>0$, for $n $ sufficiently large, we have
\begin{equation*}
\begin{aligned}
(2t_1^2 d + t_1) \|U_{\varepsilon_n}\|_{2}^2 - \frac{\mu t_0^{q}}{q} \|U_{\varepsilon_n}\|_{q}^{q}+ O(\varepsilon_n)&=K_1 \, \varepsilon_n -K_2\, \varepsilon_n^{N-\frac{(N-1)q}{2}}+ O(\varepsilon_n)\\
&\leq - \frac {K_2}{2} \, \varepsilon_n^{N-\frac{(N-1)q}{2}}<0.\\
\end{aligned}
\end{equation*}
Similarly, if $N=2$, there exist constants $\tilde{K}_1 >0$ and $\tilde{K}_2>0$ such that
\begin{equation*}
\begin{aligned}
(2t_1^2 d + t_1) \|U_{\varepsilon_n}\|_{2}^2 - \frac{\mu t_0^{q}}{q} \|U_{\varepsilon_n}\|_{q}^{q}+ O(\varepsilon_n)&=\tilde{K}_1 \, \varepsilon_n|\log \varepsilon_n|-\tilde{K}_2\, \varepsilon_n^{N-\frac{(N-1)q}{2}}+ O(\varepsilon_n)\\
&\leq - \frac {\tilde{K}_2}{2} \, \varepsilon_n^{N-\frac{(N-1)q}{2}}<0,
\end{aligned}
\end{equation*}
since $\frac{1}{N}<N-\frac{(N-1)q}{2}< 1 \Longleftrightarrow 2<q<2+\frac{2}{N}. $ \medskip
Therefore, we  deduce that the conclusion of Proposition \ref{prop5} holds.	
	
\end{proof}


\noindent \textbf{Proof of Theorem 1.2.}\ \
By Propositions \ref{prop1}-\ref{prop3}, there exists a second solution $v_a\in V(a)$ which satisfies $F_{\mu}(v_a)<m(a)+\frac{\mathcal{S}^{N}}{2N}$.

At first, for $\mu >0$ fixed, we prove that
\begin{equation}\label{c5}
F_{\mu}(v_a) \to \frac{\mathcal{S}^{N}}{2N} \quad \text{as} \quad  a \to 0.
\end{equation}
By $P_{\mu}(v_a) = 0$, as in Proposition \ref{prop2}
\begin{align*}
F_{\mu}\left(v_{a}\right)&=\frac{1}{2}||v_{a}||_{\dot{H}^{1/2}}^{2}-\frac{\mu}{q}\|v_a\|^{q}_{q}-\frac{1}{2^*}\|v_a\|^{2^*}_{2^*}\\
&\ge\frac{1}{2N}||v_{a}||_{\dot{H}^{1/2}}^{2}-\frac{\mu(2^*-q\gamma_q)}{q2^*}C^{q}_{opt} a^{q(1-\gamma_q)}||v_{a}||_{\dot{H}^{1/2}}^{q\gamma_q}
+o(1).
\end{align*}
Since
\begin{equation*}
F_{\mu}(v_a) = M^0(a) < m(a) + \frac{\mathcal{S}^{N}}{2N} \leq \frac{\mathcal{S}^{N}}{2N}
\end{equation*}
and $q\gamma_q<2$, we deduce that $\{v_a\} \subset H^{1/2}(\R^N)$ is uniformly bounded with respect to $a \in (0,a_*].$ Thus, by the fractional Gagliardo-Nirenberg inequality, we have
\begin{align*}
\|v_a\|_{q}^{q} \leq C^{q}_{opt}(N,q) \|v_a\|_{2}^{q(1-\gamma_q)} \|v_{a}\|_{\dot{H}^{1/2}}^{q\gamma_q} \to 0 \quad\mbox{as } a \to 0.
\end{align*}
Hence, from $P_{\mu}(v_a)=0$, we get
\begin{align*}
\ell := \lim_{a\to 0} ||v_{a}||_{\dot{H}^{1/2}}^{2}= \lim_{a\to 0} \|v_a\|_{2^*}^{2^*} \leq \frac{1}{\mathcal{S}^{\frac{2^*}{2}}} \lim_{a\to 0} ||v_{a}||_{\dot{H}^{1/2}}^{2^*} = \frac{1}{\mathcal{S}^{\frac{2^*}{2}}} \ell^{\frac{2^*}{2}}.
\end{align*}
Therefore, either $\ell = 0$ or $\ell \geq \mathcal{S}^{N}$. We claim that $\ell = 0$  is impossible. Indeed, since  $v_a \in \Lambda^-(a)$, we have that
\begin{align*}
||v_{a}||_{\dot{H}^{1/2}}^{2} - \mu \gamma_q \|v_a\|_{q}^{q} - \|v_a\|_{2^*}^{2^*} = 0,
\end{align*}
and by Corollary \ref{cor4.2},
\begin{align*}
-\frac{\mu\gamma_q}{2}\big[\frac{q\gamma_q}{2}-1\big]\|v_a\|_{q}^{q} - \frac{1}{2(N-1)}\|v_a\|_{2^*}^{2^*}< 0.
\end{align*}
Using the Sobolev inequality, we get
\begin{align*}
||v_{a}||_{\dot{H}^{1/2}}^{2} < K\|v_a\|_{2^*}^{2^*} \leq K \frac{1}{\mathcal{S}^{\frac{2^*}{2}}} \|v_a\|_{\dot{H}^{1/2}}^{2^*},
\end{align*}
where $K=1+\frac{2}{(N-1)(2-q\gamma_q)}$. Therefore, $\ell\ge \mathcal{S}^N$, and $P_{\mu}(v_a)=0$, we have
\begin{align*}
M^0(a) = F_{\mu} (v_a) &=  \frac{1}{2N} ||v_{a}||_{\dot{H}^{1/2}}^{2} - \frac{\mu(2^*-q\gamma_q)}{q2^*} \|v_a\|_{q}^{q}\\
&= \frac{1}{2N} ||v_{a}||_{\dot{H}^{1/2}}^{2} + o_a(1)
\geq \frac{\mathcal{S}^{N}}{2N}+o_a(1).
\end{align*}
Moreover $m(a) \to 0$ as $a \to 0$, and that
\begin{align*}
M^0(a) < m(a) + \frac{\mathcal{S}^{N}}{2N},
\end{align*}
we obtain \eqref{c5}, which implies that $||v_{a}||_{\dot{H}^{1/2}}^{2}\to \mathcal{S}^{N}$. We conclude the proof of Theorem \ref{th1.2} (1).
	
By Lemma \ref{lem3.1}, we have $a_*(\mu)\to \infty$ as $\mu\to 0$. Since $v_a\in S(a)$ exists for any $\mu\to 0$ sufficiently small. Similarly, we get $\{v_a\}\subset H^{1/2}_r(\R^N)$ is uniformly bounded as $\mu \to 0$ and thus, using the fractional Gagliardo-Nirenberg inequality, we have  	
\begin{align*}
\mu \|v_a\|_{q}^{q}\leq \mu C^q_{opt}(N,q) \|v_a\|_{2}^{q(1-\gamma_q)} ||v_{a}||_{\dot{H}^{1/2}}^{q\gamma_q} \to 0 \quad\text{as} \ \mu\to 0.
\end{align*}
The rest of the proof is similar to the one of Theorem \ref{th1.2} (1).
\qed

\end{document}